\begin{document}

\def\cal#1{{\mathcal #1}}
\def\bsigma#1{\mathbf{\Sigma}^0_{#1}}
\def\bpi#1{\mathbf{\Pi}^0_{#1}}
\def\bdelta#1{\mathbf{\Delta}^0_{#1}}
\def\bdiff#1{\mathcal{D}_{#1}}
\def\baire{\omega^{\omega}}
\def\mybf#1{{\mathbf #1}}


\def\cal#1{{\mathcal #1}}
\def\bb#1{{\mathbb #1}}
\def\bsigma#1{\mathbf{\Sigma}^0_{#1}}
\def\bpi#1{\mathbf{\Pi}^0_{#1}}
\def\bdelta#1{\mathbf{\Delta}^0_{#1}}
\def\analytic{\mathbf{\Sigma}^1_1}
\def\coanalytic{\mathbf{\Pi}^1_1}
\def\bianalytic{\mathbf{\Delta}^1_1}
\def\baire{\omega^{\omega}}
\def\cntbased{$\omega${\bf Top}$_0$}
\def\bdiff#1{\mathbf{\Sigma}^{-1}_{#1}}

\def\SpecZ{{\Lambda}}

\title{A generalization of a theorem of Hurewicz for quasi-Polish spaces}
\author{Matthew de Brecht}
\address{Graduate School of Human and Environmental Studies, Kyoto University, Kyoto, Japan}
\email{matthew@i.h.kyoto-u.ac.jp}

\keywords{Quasi-Polish spaces, descriptive set theory, domain theory, Baire spaces}
\subjclass{03E15, 06B35, 54E15, 54E50}

\begin{abstract}
We identify four countable topological spaces $S_2$, $S_1$, $S_D$, and $S_0$ which serve as canonical examples of topological spaces which fail to be quasi-Polish. These four spaces respectively correspond to the $T_2$, $T_1$, $T_D$, and $T_0$-separation axioms.  $S_2$ is the space of rationals, $S_1$ is the natural numbers with the cofinite topology, $S_D$ is an infinite chain without a top element, and $S_0$ is the set of finite sequences of natural numbers with the lower topology induced by the prefix ordering. Our main result is a generalization of Hurewicz's theorem showing that a co-analytic subset of a quasi-Polish space is either quasi-Polish or else contains a countable $\bpi 2$-subset homeomorphic to one of these four spaces.
\end{abstract}

\dedicatory{This paper is dedicated to Dieter Spreen in celebration of his 65th birthday.}
\maketitle

\section{Introduction}

This paper is a continuation of recent work on developing the descriptive set theory of general topological spaces initiated by V. Selivanov (see \cite{selivanov}). It was recently shown in \cite{debrecht2013} that a very general class of countably based topological spaces, called \emph{quasi-Polish spaces}, allow a natural extension of the descriptive set theory of Polish spaces (see \cite{kechris})  to the non-metrizable case. The class of quasi-Polish spaces contains not only the class of Polish spaces, but also many non-Hausdorff spaces that occur in fields such as theoretical computer science (e.g., $\omega$-continuous domains with the Scott-topology) and algebraic geometry (e.g., the spectrum of a countable commutative ring with the Zariski topology).

Given that so many important classes of countably based spaces happen to be quasi-Polish, it would be nice to have examples and characterizations of the spaces that are \emph{not} quasi-Polish. In the traditional descriptive set theory for Polish spaces, the space of rationals is the typical example of a metrizable space that is not Polish. In fact, an important theorem by W. Hurewicz \cite{hurewicz} states that a co-analytic subset of a Polish space is either Polish or else it contains a closed subset homeomorphic to the rationals. This shows that the rationals are the ``canonical'' example of a non-Polish metrizable space, since any ``definable'' example of a non-Polish metrizable space will contain the rationals as a closed subset. The reader can find a proof of Hurewicz's original theorem and some generalizations in \cite{kechris} (Theorem 21.18), \cite{solecki1994} (Corollary 8), and \cite{louveau_saintraymond}. 

The purpose of this paper is to identify the canonical examples of non-quasi-Polish spaces, and to extend Hurewicz's theorem to this more general setting. Naturally, the rationals alone will be insufficient for our purposes, but it turns out that a characterization is possible using only four different spaces. Here are the four canonical examples of countably based $T_0$-spaces that are not quasi-Polish:
\begin{itemize}
\item
The space $S_2$, defined as the set of rational numbers with the subspace topology inherited from the space of reals. This space satisfies the $T_2$-separation axiom, meaning that distinct points can be separated by disjoint open sets.
\item
The space $S_1$, defined as the set of natural numbers with the cofinite topology. A non-empty subset of this space is open if and only if it is cofinite. This is a $T_1$-space (i.e., every singleton subset is closed), but it is not a $T_2$-space.
\item
The space $S_D$, defined as the set of natural numbers with the Scott-topology under the usual ordering. A non-empty subset of this space is open if and only if it is of the form $\uparrow\!n=\{m\in\omega\,|\,n\leq m\}$ for some natural number $n\in\omega$. This is not a $T_1$-space, but it satisfies a weak separation axiom due to Aull and Thron \cite{aull_thron} called the $T_D$-separation axiom. A space is a $T_D$-space if and only if every singleton subset is \emph{locally closed} (i.e., equal to the intersection of an open set with a closed set).  
\item
The space $S_0$, defined as the set of finite sequences of natural numbers, ordered by the prefix relation, and given the lower topology. A subbasis for the \emph{closed} subsets of this space is given by sets of the form $\uparrow\!p=\{q\in\omega^{<\omega}\,|\, p\preceq q\}$, where $p\in\omega^{<\omega}$ and $\preceq$ is the prefix relation. This space fails to satisfy the $T_D$-separation axiom (every non-empty locally closed subset of this space is infinite), but it does satisfy the $T_0$-separation axiom.
\end{itemize}
Our main result (see Theorem \ref{thrm:main_theorem} below) is that a co-analytic subset of a quasi-Polish space is either quasi-Polish or else it contains a $\bpi 2$-subset homeomorphic to either $S_0$, $S_D$, $S_1$, or $S_2$. The majority of this paper concerns proving the statement for countable spaces, and then we will apply Hurewicz's original theorem to obtain the general result. 

As mentioned earlier, the space of rationals, $S_2$, is well known as the typical example of a non-Polish metric space. The spaces $S_1$ and $S_D$ are only slightly less well known, and also appear in several places as counter examples. For example, $S_1$ is the typical example of a non-sober $T_1$-space, and $S_D$ is the typical example of a partially ordered set that is not directed-complete. Note that $S_D$ also fails to be sober, and we will see that $S_D$ and $S_1$ are the canonical examples of non-sober countably based $T_0$-spaces (see Theorem~\ref{thrm:sober_characterization} below). 

Together, the three spaces $S_D$, $S_1$, and $S_2$ are the canonical examples of countable perfect $T_D$-spaces, and provide the main examples for why a space can fail to be a \emph{Baire space} (i.e., a space in which every countable intersection of dense open sets is dense). In fact, one of our results (see Theorem \ref{thrm:completelyBaire_characterization} below) shows that a countably based $T_0$-space is \emph{completely Baire} (i.e., every closed set is a Baire space) if and only if it does not contain a $\bpi 2$-subset homeomorphic to one of these three countable perfect $T_D$-spaces. The completely Baire property will be very useful for the purposes of this paper, but it is interesting to note that it is relevant even in fields such as locale theory, for example in determining the spatiality of locale presentations \cite{heckmann} and localic products \cite{tillplewe}.

The author is unaware of any previous references to the space $S_0$, but this space is also useful as a counter example in many cases. For example, $S_0$ is an example of a countable completely Baire space which is not quasi-Polish. This is a rare example where the descriptive set theories for Polish spaces and quasi-Polish spaces diverge, since any countable completely Baire metrizable space is Polish.

Furthermore, as a result of our Theorem \ref{thrm:CharacterizationOfCountableDelta3} below, $S_0$ cannot be embedded into a quasi-Polish space as a $\bpi 3$-subset. It follows from the results of \cite{debrecht2013} that $S_0$ is a countable space which does not admit a bicomplete quasi-metric (the only other typical example of such a countable space that the author is aware of is the set of rationals with the Scott-topology under the usual ordering).


It is also interesting to note that $S_0$ and $S_2$ are the canonical examples of countable sober spaces which are not quasi-Polish. From R. Heckmann's characterization of countably presentable locales in \cite{heckmann}, we can conclude that the topologies of $S_0$ and $S_2$ do not have countable presentations.

This paper is composed of seven sections. We provide some preliminary definitions in the next section. Each of the later sections is concerned with extending our characterizations of non-quasi-Polish spaces to more general classes of spaces, leading up to our final characterization of all co-analytic spaces in the last section.

\section{Preliminaries}

Our notation will follow that of \cite{debrecht2013}. The reader is expected to be familiar with general topology, descriptive set theory \cite{kechris}, and domain theory \cite{etal_scott}. The following modification of the Borel hierarchy, due to V. Selivanov, is required in order to provide a meaningful classification of the Borel subsets of non-metrizable spaces.

\begin{defi}
Let $X$ be a topological space. For each ordinal $\alpha$ ($1\leq \alpha < \omega_1$) we define $\bsigma \alpha(X)$ inductively as follows.
\begin{enumerate}
\item
$\bsigma 1(X)$ is the set of all open subsets of $X$.
\item
For $\alpha>1$, $\bsigma \alpha(X)$ is the set of all subsets $A$ of $X$ which can be expressed in the form
\[A=\bigcup_{i\in\omega}B_i\setminus B'_i,\]
where for each $i$, $B_i$ and $B'_i$ are in $\bsigma {\beta_i}(X)$ for some $\beta_i<\alpha$.
\end{enumerate}
We define $\bpi \alpha(X)= \{X\setminus A\,|\, A\in \bsigma \alpha(X)\}$ and $\bdelta \alpha(X)=\bsigma \alpha(X)\cap \bpi \alpha(X)$.
\qed
\end{defi}

The above definition is equivalent to the classical definition of the Borel hierarchy for metrizable spaces, but it differs for more general spaces. 

A topological space is \emph{quasi-Polish} if and only if it is countably based and admits a Smyth-complete quasi-metric. Polish spaces and $\omega$-continuous domains are examples of quasi-Polish spaces. A space is quasi-Polish if and only if it is homeomorphic to a $\bpi 2$-subset of $\cal P(\omega)$, the power set of $\omega$ with the Scott-topology. The reader should consult \cite{debrecht2013} for additional results on quasi-Polish spaces.

Since we will only be concerned with countably based spaces, the following notation will be convenient.

\begin{defi}\label{def:basis}
Given a countably based space $X$ with a basis $\{B_i\}_{i\in\omega}$ of open sets, we define $B(x,n)=\bigcap\{B_i\,|\, x\in B_i \mbox{ and } i\leq n\}$ for each $x\in X$ and $n\in\omega$.
\qed
\end{defi}

In the above definition we are using the convention that the empty intersection equals $X$, so $B(x,n)=X$ if there is no $i\leq n$ with $x\in B_i$. Note that for any open $U$ containing $x$, there is $n\in\omega$ with $x\in B(x,n)\subseteq U$.

We will also often refer to the specialization order on a topological space.

\begin{defi}
Given a topological space $X$, the \emph{specialization order} $\leq$ on $X$ is defined as $x\leq y$ if and only if $x$ is in the closure of $y$.
\qed
\end{defi}

The specialization order on $X$ is a partial order if and only if $X$ is a $T_0$-space. The specialization order is a trivial partial order if and only if $X$ is a $T_1$-space.

\section{\texorpdfstring{Countable perfect $T_D$-spaces}{Countable perfect TD-spaces}}\label{sec:countableTDspaces}

A subset of a space is \emph{locally-closed} if it is equal to the intersection of an open set with a closed set. A \emph{$T_D$-space} is a space in which every singleton subset is locally closed. A space is \emph{perfect} if and only if every non-empty open subset is infinite. Note that if $X$ is a $T_0$-space, then $X$ is perfect if and only if there is no $x\in X$ such that the singleton subset $\{x\}$ is open. 

Our interest in countable perfect $T_D$-spaces is due to the fact that a countably based countable $T_D$-space is quasi-Polish if and only if it does not have a non-empty perfect subspace (see the section on scattered spaces in \cite{debrecht2013}). 

In order to help familiarize the reader with $T_D$-spaces, we first prove a characterization of countably based countable $T_D$-spaces in terms of the Borel complexity of the diagonal (recall that the \emph{diagonal} of a space $X$ is defined as $\Delta_X=\{\langle x,y\rangle\in X\times X\,|\, x=y\}$).

\begin{thm}
The following are equivalent for a countably based space $X$ with countably many points:
\begin{enumerate}
\item
$X$ is a $T_D$-space,
\item
Every singleton subset $\{x\}$ of $X$ is in $\bdelta 2(X)$,
\item
Every subset of $X$ is in $\bdelta 2(X)$,
\item
The diagonal of $X$ is in $\bdelta 2(X\times X)$.
\end{enumerate}
\end{thm}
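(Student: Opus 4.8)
The plan is to run the cycle $(1)\Rightarrow(2)\Rightarrow(3)\Rightarrow(4)\Rightarrow(1)$, after isolating two elementary facts about the Selivanov hierarchy that carry most of the weight. The first and most important is a ``single point'' observation: since $\bsigma 1$ consists exactly of the open sets, every member of $\bsigma 2(X)$ has the form $\bigcup_{i\in\omega} B_i\setminus B_i'$ with all $B_i,B_i'$ open. Hence if $\{x\}\in\bsigma 2(X)$, say $\{x\}=\bigcup_i B_i\setminus B_i'$, then some term $B_{i_0}\setminus B_{i_0}'$ contains $x$ and is contained in $\{x\}$, so $\{x\}=B_{i_0}\cap(X\setminus B_{i_0}')$ is locally closed. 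Conversely any locally closed set $U\cap C$ equals $U\setminus(X\setminus C)\in\bsigma 2(X)$, and its complement $(X\setminus U)\cup(X\setminus C)$ is the union of an open and a closed set, hence also in $\bsigma 2(X)$, so every locally closed set lies in $\bdelta 2(X)$. Thus for a singleton the three conditions ``$\{x\}\in\bdelta 2(X)$'', ``$\{x\}\in\bsigma 2(X)$'', and ``$\{x\}$ is locally closed'' coincide, which already gives $(1)\Leftrightarrow(2)$.

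Second, I would record that each class $\bsigma\alpha$, $\bpi\alpha$, $\bdelta\alpha$ is preserved under preimages by continuous maps (an easy induction on $\alpha$ whose base case is that preimages of open sets are open) and that $\bsigma\alpha$ is closed under countable unions directly by the definition. With these in hand, $(2)\Rightarrow(3)$ uses the hypothesis that $X$ has only countably many points: an arbitrary $A\subseteq X$ and its complement $X\setminus A$ are each countable unions of singletons, each singleton lies in $\bdelta 2(X)\subseteq\bsigma 2(X)$, so both $A$ and $X\setminus A$ lie in $\bsigma 2(X)$, whence $A\in\bdelta 2(X)$. For $(3)\Rightarrow(4)$, note that $X\times X$ is again countably based with countably many points; from $(3)$ for $X$ we obtain (via $(1)\Leftrightarrow(2)\Leftrightarrow(3)$) that $X$ is a $T_D$-space, hence so is $X\times X$ since a product of locally closed sets is locally closed, hence by the implication $(1)\Rightarrow(3)$ applied to $X\times X$ every subset of $X\times X$ — in particular $\Delta_X$ — lies in $\bdelta 2(X\times X)$.

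The remaining implication $(4)\Rightarrow(1)$ is the one that genuinely uses the diagonal. Fix $x\in X$ and consider the section map $s_x\colon X\to X\times X$, $s_x(y)=\langle x,y\rangle$, which is continuous. Its preimage of the diagonal is exactly $s_x^{-1}(\Delta_X)=\{y\in X\mid x=y\}=\{x\}$, so by continuity together with the preservation property above, $\{x\}\in\bdelta 2(X)\subseteq\bsigma 2(X)$; by the single-point argument from the first paragraph, $\{x\}$ is locally closed. Since $x$ was arbitrary, $X$ is a $T_D$-space, and the cycle is complete.

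I expect the only real subtlety to be the bookkeeping in the first paragraph: pinning down that membership of a \emph{singleton} in $\bsigma 2(X)$ forces it to be a genuine difference of two open sets rather than merely a countable union of such, since this is precisely the step that bridges the ``definable'' conditions $(2)$–$(4)$ back to the purely topological condition $(1)$. Everything else is routine closure and preimage manipulation with the Selivanov hierarchy together with the countability of $X$.
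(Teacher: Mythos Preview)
Your argument is correct and diverges from the paper's in the two non-routine implications. For $(3)\Rightarrow(4)$ the paper writes $\Delta_X=\bigcup_{x\in X}(U_x\setminus V_x)\times(U_x\setminus V_x)$ explicitly to obtain $\Delta_X\in\bsigma 2(X\times X)$, and then invokes an external result from \cite{debrecht2013} (the diagonal of any countably based $T_0$-space is $\bpi 2$) to conclude $\bdelta 2$; you instead bootstrap, observing that $X\times X$ is again countably based, countable, and $T_D$, and then apply the already-established $(1)\Rightarrow(3)$ to $X\times X$. For $(4)\Rightarrow(1)$ the paper argues by hand: it fixes a presentation $\Delta_X=\bigcup_i U_i\setminus V_i$, picks $i$ with $\langle x,x\rangle\in U_i\setminus V_i$, shrinks $U_i$ to a box $U\times U$, and shows that any $y\in U$ with $y\neq x$ is forced into some $V_i$, producing an open set around $y$ missing $x$; your section-map argument $s_x(y)=\langle x,y\rangle$ together with stability of $\bdelta 2$ under continuous preimage is shorter and makes transparent why $\{x\}$ inherits the $\bdelta 2$ property from $\Delta_X$. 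Your route is self-contained (no external citation needed) and isolates the ``singleton in $\bsigma 2$ implies locally closed'' lemma as the single substantive point; the paper's route for $(3)\Rightarrow(4)$ has the minor advantage of exhibiting the $\bsigma 2$ structure of $\Delta_X$ concretely rather than passing through the product space as a black box.
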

\begin{proof}
$(1 \Rightarrow 2)$. Easily follows from the definition of the $T_D$-axiom because locally closed sets are $\bdelta 2$.

$(2 \Rightarrow 3)$. If every singleton subset of $X$ is $\bdelta 2$, then the countability of $X$ implies that every subset of $X$ is the countable union of $\bdelta 2$-sets. Thus for any $S\subseteq X$ both $S$ and the complement of $S$ are $\bsigma 2$, hence $S$ is $\bdelta 2$.

$(3 \Rightarrow 4)$. For each $x\in X$, the singleton $\{x\}$ is in $\bsigma 2(X)$ by assumption, hence there are open sets $U_x$ and $V_x$ such that $\{x\}=U_x\setminus V_x$. Then $\Delta_X = \bigcup_{x\in X}\big[ \big(U_x\setminus V_x\big)\times\big(U_x\setminus V_x\big)\big]$ is in $\bsigma 2(X\times X)$. It was shown in \cite{debrecht2013} that the diagonal of every countably based $T_0$-space is $\bpi 2$, therefore $\Delta_X$ is in $\bdelta 2(X\times X)$.

$(4 \Rightarrow 1)$. Assume that $\Delta_X = \bigcup_{i\in\omega}U_i\setminus V_i$ for $U_i,V_i$ open in $X\times X$. Let $x$ be any element of $X$. Then there is some $i\in\omega$ such that $\langle x,x\rangle\in U_i\setminus V_i$. Let $U$ be an open neighborhood of $x$ such that $\langle x,x\rangle \in U\times U \subseteq U_i$. Fix any $y\in U$ distinct from $x$. Clearly, $\langle x,y\rangle\in U\times U\subseteq U_i$, hence $\langle x,y\rangle \in V_i$ because $\langle x,y\rangle\not\in \Delta_X$. Let $V$ and $W$ be open subsets of $X$ such that $\langle x,y\rangle \in V\times W\subseteq V_i$. Then $x\not\in W$ because otherwise we would have the contradiction $\langle x,x\rangle\in V_i$. Therefore, $W$ is a neighborhood of $y$ that does not contain $x$, hence $y$ is not in the closure of $\{x\}$. Thus, $\{x\}$ equals the intersection of $U$ with the closure of $\{x\}$, and it follows that $X$ is a $T_D$-space.
\end{proof}

The three spaces $S_2$, $S_1$, and $S_D$ defined in the introduction are the canonical countably based countable perfect $T_D$-spaces. The goal of this section is to prove that any non-empty countably based countable perfect $T_D$-space contains one of the above spaces.

\begin{lem}
If $X$ is a non-empty countably based countable perfect $T_D$-space, then either $X$ contains a perfect subspace homeomorphic to $S_D$ or else $X$ contains a non-empty perfect $T_1$-subspace.
\end{lem}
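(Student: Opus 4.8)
The plan is to split into cases according to whether the specialization order of $X$ — which is a partial order, since every $T_D$-space is $T_0$ — contains a strictly increasing $\omega$-sequence $a_0 < a_1 < a_2 < \cdots$. Two elementary observations will be used repeatedly: every open set is upward closed for the specialization order (if $U$ is open, $x \in U$ and $x \le y$, then the open neighbourhood $U$ of $x$ meets $\overline{\{y\}}$, hence meets $\{y\}$, so $y \in U$), and $\overline{\{z\}} = \{x \mid x \le z\}$ directly from the definition of the specialization order.

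Suppose first that such a sequence $a_0 < a_1 < a_2 < \cdots$ exists. I would show that the subspace $A = \{a_n \mid n \in \omega\}$ is homeomorphic to $S_D$ (which is itself perfect, so this case is then finished). Since $X$ is $T_0$, the map $a_n \mapsto n$ is an order-isomorphism of $(A, \le)$ onto $(\omega, \le)$, so by the first observation the trace $U \cap A$ of any open $U$ is one of the sets $\emptyset$ or $A_n := \{a_m \mid m \ge n\}$. Conversely each $A_n$ is such a trace: by the $T_D$-property $\{a_n\} = V_n \cap \overline{\{a_n\}}$ for some open $V_n$; the set $\overline{\{a_n\}} = \{x \mid x \le a_n\}$ contains $a_0, \dots, a_{n-1}$, none of which lies in $V_n$, whereas $a_n \in V_n$ and upward closure of $V_n$ forces $a_n, a_{n+1}, \dots \in V_n$, so $V_n \cap A = A_n$. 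Hence the topology of $A$ is exactly $\{\emptyset\} \cup \{A_n \mid n \in \omega\}$, which $a_n \mapsto n$ carries onto the Scott topology of $\omega$; so $A \cong S_D$.

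Suppose instead that $X$ has no strictly increasing $\omega$-sequence, i.e.\ its specialization order has no infinite ascending chain. Let $M$ be the set of maximal elements. The ascending chain condition shows that every point of $X$ lies below some element of $M$, so $M \ne \emptyset$ (as $X \ne \emptyset$); and $M$ is an antichain, so it inherits the trivial order and the subspace $M$ is $T_1$. The crux — and the step I expect to be the real obstacle — is that the subspace $M$ is \emph{perfect}. Suppose not: some $m_0 \in M$ is isolated in $M$, so there is an open set $W$ of $X$ with $W \cap M = \{m_0\}$. By the $T_D$-property write $\{m_0\} = V \cap \overline{\{m_0\}}$ with $V$ open, and put $U = W \cap V$; then $U$ is open, $m_0 \in U$, $U \cap M = \{m_0\}$ and $U \cap \overline{\{m_0\}} = \{m_0\}$. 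Since $X$ is perfect, $U$ is infinite, so there is $y \in U$ with $y \ne m_0$; then $y \notin M$ and $y \notin \overline{\{m_0\}}$, so $y$ is not maximal and $y \not\le m_0$. Using the ascending chain condition again, choose $m \in M$ with $m \ge y$. Since $U$ is open, hence upward closed, and $y \le m$, we get $m \in U \cap M = \{m_0\}$, so $m = m_0$ and $y \le m_0$ — a contradiction. Thus $M$ is a non-empty perfect $T_1$-subspace, which finishes this case. Note that the perfectness argument uses all three hypotheses at once: perfectness of $X$ supplies the auxiliary point $y$, the $T_D$-axiom lets us shrink $U$ away from $\overline{\{m_0\}}$, and the ascending chain condition produces a maximal element above $y$, which upward closure of $U$ then traps inside $\{m_0\}$.
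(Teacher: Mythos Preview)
Your proof is correct and follows essentially the same approach as the paper: either a strictly increasing chain furnishes a copy of $S_D$, or else $Max(X)$ is shown to be a non-empty perfect $T_1$-subspace by the same $T_D$-plus-upward-closure contradiction. The only cosmetic difference is the case split---the paper splits on whether every point lies below a maximal element (constructing the chain from a witness when this fails), whereas you split on the ascending chain condition---but in both versions the second alternative supplies exactly the fact the perfectness argument needs, namely that every point sits below some maximal element.
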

\begin{proof}
Define $Max(X)$ to be the subset of $X$ of elements that are maximal with respect to the specialization order. It is immediate that $Max(X)$ is a $T_1$-space.

First assume there is some $x_0 \in X$ such that there is no $y\in Max(X)$ with $x_0\leq y$. Then $x_0\not\in Max(X)$, so there is some $x_1\not=x_0$ with $x_0\leq x_1$. The assumption on $x_0$ implies $x_1\not\in Max(X)$, so there is $x_2\not=x_1$ with $x_0\leq x_1 \leq x_2$. Continuing in this way, we produce an infinite sequence $\{x_i\}_{i\in\omega}$ of distinct elements of $X$ with $x_i\leq x_j$ whenever $i\leq j$. Clearly $\{x_i\}_{i\in\omega}$, viewed as a subspace of $X$, is homeomorphic to $S_D$.

So if $X$ does not contain a copy of $S_D$, then every element of $X$ is below some element of $Max(X)$ with respect to the specialization order. This implies, in particular, that $Max(X)$ is non-empty. We show that $Max(X)$ is perfect as a subspace of $X$. Assume for a contradiction that there is $x\in Max(X)$ and open $V\subseteq X$ such that $\{x\}=V\cap Max(X)$. Since $X$ is a $T_D$-space, there is open $U\subseteq X$ such that $\{x\}=U\cap Cl(\{x\})$, where $Cl(\cdot)$ is the closure operator on $X$. Then $W=U\cap V$ is an open subset of $X$ containing $x$. Fix any $y\in W$. By assumption, there is some $y'\in Max(X)$ such that $y\leq y'$. Since $W$ is open, the definition of $\leq$ implies that $y'\in W$. Since $\{x\}=W\cap Max(X)$, it follows that $y' = x$ hence $y\leq x$. Therefore, $y\in Cl(\{x\})$ which implies $x=y$ because $\{x\}=W\cap Cl(\{x\})$. Since $y\in W$ was arbitrary, $\{x\}=W$ is an open subset of $X$, which contradicts $X$ being a perfect space. Therefore, $Max(X)$ is a non-empty perfect $T_1$-subspace of $X$.
\end{proof}

As a result of the above lemma, it only remains to consider the case of perfect $T_1$-spaces.

For any topological space $X$, open $U\subseteq X$, and $x\in X$, we write $x\lhd U$ if $x\in U$ and for every open $V$ containing $x$ and non-empty open $W\subseteq U$, the intersection $V\cap W$ is non-empty. In other words, $x\lhd U$ if and only if $x\in U$ and every neighborhood of $x$ is dense in the subspace $U$. Note that if $x\lhd U$ and $V\subseteq U$ is open and contains $x$, then $x\lhd V$. We define $D(X)$ to be the set of all $x\in X$ such that there is open $U\subseteq X$ with $x\lhd U$.

In the proofs of the next two lemmas, $X$ will be a countably based space with a basis $\{B_i\}_{i\in\omega}$ of open sets. We define $B(\cdot,\cdot)$ as in Definition~\ref{def:basis}.

\begin{lem}
If $X$ is a countably based countable perfect $T_1$-space and $D(X)$ has non-empty interior, then $X$ contains a perfect subspace homeomorphic to $S_1$.
\end{lem}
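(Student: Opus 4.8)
The plan is to build, by recursion on $j\in\omega$, distinct points $y_j\in X$ together with a decreasing sequence of non-empty open sets $O=U_{-1}\supseteq U_0\supseteq U_1\supseteq\cdots$ all contained in $D(X)$, with $y_j\lhd U_j$ for every $j$, arranged so that the countably infinite subspace $Y=\{y_j\,|\,j\in\omega\}$ carries the cofinite topology; since $S_1$ is exactly a countably infinite set with the cofinite topology, this proves the lemma. Because $X$ (hence $Y$) is $T_1$, every cofinite subset of $Y$ is relatively open, so it is enough to make every non-empty relatively open subset of $Y$ cofinite. Using that each open neighbourhood of a point $y_i$ contains some $B(y_i,n)$, it suffices to arrange the single condition $(\ast)$: for all $i,n\in\omega$ the set $\{j\in\omega\,|\,y_j\notin B(y_i,n)\}$ is finite. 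Indeed, if $(\ast)$ holds and $V$ is open in $X$ with $y_i\in V$, choosing $n$ with $y_i\in B(y_i,n)\subseteq V$ shows that all but finitely many $y_j$ lie in $V$, so $V\cap Y$ is cofinite in $Y$.

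To secure $(\ast)$ I will make the point $y_j$ chosen at stage $j$ lie in $B(y_i,n)$ for all $i,n<j$; then $\{j\,|\,y_j\notin B(y_i,n)\}\subseteq\{0,\dots,\max(i,n)\}$ is finite for every fixed $i,n$. The recursion begins by fixing a non-empty open $O\subseteq D(X)$ (which exists because $D(X)$ has non-empty interior) and setting $U_{-1}=O$. At stage $j$, with $U_{j-1}$ a non-empty open subset of $O$ already in hand, consider
\[G_j\;=\;U_{j-1}\cap\bigcap_{i<j,\;n<j}B(y_i,n).\]
Each $B(y_i,n)$ is an open set containing $y_i$, and since $y_i\lhd U_i$ with $U_{j-1}\subseteq U_i$, the set $B(y_i,n)\cap U_{j-1}$ is open and dense in $U_{j-1}$; a finite intersection of open dense subsets of $U_{j-1}$ is again open and dense, so $G_j$ is a non-empty open subset of $X$ contained in $O\subseteq D(X)$. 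As $X$ is perfect, $G_j$ is infinite, so I may pick $y_j\in G_j\setminus\{y_0,\dots,y_{j-1}\}$; and since $y_j\in D(X)$ there is an open $U'$ with $y_j\lhd U'$, so putting $U_j=U'\cap G_j$ yields $y_j\lhd U_j$ (the relation $\lhd$ passes to open subneighbourhoods) and $U_j\subseteq G_j\subseteq U_{j-1}$. All invariants are preserved, and by construction $y_j\in B(y_i,n)$ whenever $i,n<j$, so $(\ast)$ holds and $Y\cong S_1$.

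The step I expect to require the most care — and the only place the hypotheses are used essentially — is the claim that $B(y_i,n)\cap U_{j-1}$ is dense in $U_{j-1}$. This is exactly what $y_i\lhd U_i$ provides, together with the nesting $U_{j-1}\subseteq U_i$: by definition $y_i\lhd U_i$ means every neighbourhood of $y_i$ meets every non-empty open subset of $U_i$, and every non-empty relatively open subset of $U_{j-1}$ is such a subset of $U_i$, whence $B(y_i,n)$ (a neighbourhood of $y_i$) meets it. Everything else is routine bookkeeping: $B(\cdot,\cdot)$ is open and contains the relevant point, a finite intersection of dense open sets is dense open, and a non-empty open subset of a perfect space is infinite, so the recursion never stalls and produces infinitely many distinct points; the hypothesis that $D(X)$ has non-empty \emph{interior} (rather than merely $D(X)\ne\emptyset$) is precisely what keeps every $G_j$ inside $D(X)$, so that the chosen $y_j$ can again serve as a centre of some $\lhd$-relation.
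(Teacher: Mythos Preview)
Your proof is correct and follows essentially the same strategy as the paper's: both build a decreasing chain of open sets $U_j\subseteq D(X)$ with $y_j\lhd U_j$ and force $y_{j}\in B(y_i,n)$ for all earlier $i$ (and small $n$) to obtain the cofinite topology on $\{y_j\}$. The only cosmetic difference is that you justify nonemptiness of the finite intersection $G_j$ via the observation that each $B(y_i,n)\cap U_{j-1}$ is dense open in $U_{j-1}$, whereas the paper peels off the factors one at a time using $y_i\lhd U_i$ directly; both arguments are unwinding the same use of the $\lhd$ relation.
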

\begin{proof}
Choose any $x_0$ in the interior of $D(X)$ and let $U_0$ be an open subset of $X$ with $x_0\lhd U_0\subseteq D(X)$. Then $U_0$ is infinite because $X$ is perfect, so we can choose $x_1\in U_0$ distinct from $x_0$ and find open $U_1 \subseteq U_0$ with $x_1\lhd U_1$.

Let $n\geq 1$ and assume we have defined a sequence $x_0,\ldots,x_n\in X$ and open sets $U_0\supseteq \cdots \supseteq U_n$ with $x_i\lhd U_i\subseteq D(X)$. We choose $x_{n+1}\in X$ and open $U_{n+1}\subseteq U_n$ with $x_{n+1} \lhd U_{n+1}$ as follows. Define $V_i^n = U_i\cap B(x_i,n)$ for $0\leq i\leq n$, and let $V^n = V_0^n\cap \ldots \cap V_n^n$. Since $x_{n-1}\in V_{n-1}^n$ and $V_n^n\subseteq U_{n-1}$ is non-empty, $x_{n-1}\lhd U_{n-1}$ implies $V_{n-1}^n\cap V_n^n$ is non-empty. Continuing this argument inductively shows that $V^n$ is a non-empty open set. Thus $V^n$ is infinite, so there is $x_{n+1}\in V^n$ distinct from $x_i$ for $0\leq i\leq n$. Since $V^n\subseteq U_n \subseteq D(X)$, there is open $U_{n+1}\subseteq U_n$ with $x_{n+1}\lhd U_{n+1}\subseteq D(X)$.

Let $S=\{x_i\in X\,|\, i\in\omega\}$ be the subset of $X$ of the elements enumerated in the above construction. We claim that $S$ is homeomorphic to $S_1$. $S$ is infinite by construction, and the assumption that $X$ is a $T_1$-space implies that the subspace topology on $S$ contains the cofinite topology. Therefore, it suffices to show that every non-empty open subset of $S$ is cofinite. Let $U\subseteq S$ be non-empty open, so there is some $i\in\omega$ with $x_i\in U$. Let $m\geq i$ be large enough that $S\cap B(x_i,m)\subseteq U$. By the construction of $S$, $x_{n+1}\in V^n\subseteq B(x_i,n)\subseteq B(x_i,m)$ for all $n\geq m$. It follows that $x_{n+1}\in U$ for all $n \geq m$, hence $U$ is a cofinite subset of $S$.
\end{proof}

The final case to consider is when $X$ is a $T_1$-space and $X\setminus D(X)$ is dense in $X$.

\begin{lem}
If $X$ is a countably based countable perfect $T_1$-space and $X\setminus D(X)$ is dense in $X$, then $X$ contains a perfect subspace homeomorphic to $S_2$.
\end{lem}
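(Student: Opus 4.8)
The plan is to produce a subspace $S\subseteq X$ with no isolated points which admits a basis of sets that are clopen in $S$. Such an $S$ is zero-dimensional; since it is also $T_1$ (being a subspace of the $T_1$-space $X$) it is regular, and since it is countably based (a subspace of a countably based space) it is metrizable by Urysohn's metrization theorem; finally, a countable metrizable space without isolated points is homeomorphic to the rationals $S_2$ by Sierpi\'nski's theorem. So it suffices to build such an $S$. Throughout I use the reformulation that $x\notin D(X)$ if and only if for every open $U\ni x$ there is a non-empty open $W\subseteq U$ with $x\notin Cl(W)$ (immediate from the definition of $\lhd$).

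First I would fix a basis $\{B_i\}_{i\in\omega}$ of $X$ and construct points $x_s\in X$ and open sets $U_s\subseteq X$, for $s\in\omega^{<\omega}$, by recursion on $|s|$, so that for all $s$ and all $n\ne m$: (i)~$x_s\in U_s$ and $x_s\in X\setminus D(X)$; (ii)~$U_{s^\frown n}\subseteq U_s\cap B(x_s,n)$; (iii)~$U_{s^\frown n}\cap U_{s^\frown m}=\emptyset$; and (iv)~$x_s\notin Cl(U_{s^\frown n})$. For the base case, $X$ is non-empty, so $X\setminus D(X)\ne\emptyset$; pick $x_\emptyset$ there and set $U_\emptyset=X$. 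The inductive step is the only point of substance and is where the hypothesis is used: given $x_s$ and $U_s$ (with $x_s\notin D(X)$), I choose the children $U_{s^\frown N}$ one at a time. Having chosen $U_{s^\frown 0},\dots,U_{s^\frown(N-1)}$, the set $U''=(U_s\cap B(x_s,N))\setminus\bigcup_{j<N}Cl(U_{s^\frown j})$ is an open neighborhood of $x_s$ (immediate from (i) and (iv)); applying the reformulation above with $U=U''$ gives a non-empty open $W\subseteq U''$ with $x_s\notin Cl(W)$; and since $X\setminus D(X)$ is dense while $W$ is non-empty open, I may pick $x_{s^\frown N}\in W\cap(X\setminus D(X))$ and set $U_{s^\frown N}=W$. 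Conditions (i)--(iv) are then immediate, and (ii)--(iv) already force the $x_s$ to be pairwise distinct (incomparable $s,t$ lie in disjoint sibling sets; and if $s$ is a proper prefix of $t$ then $x_s$ lies outside a sibling set containing $U_t$), so $s\mapsto x_s$ is a bijection onto $S:=\{x_s\mid s\in\omega^{<\omega}\}$.

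Next I would read off the topology of $S$. For $s\in\omega^{<\omega}$ and $N\in\omega$, put $O_{s,N}=U_s\setminus\bigcup_{j<N}Cl(U_{s^\frown j})$ and $N_{s,N}=O_{s,N}\cap S$. Using (ii)--(iv) one checks that $N_{s,N}=\{x_s\}\cup\{x_r\mid r\text{ extends }s^\frown m\text{ for some }m\ge N\}$, and that $Cl(N_{s,N})\cap S\subseteq N_{s,N}$, the latter by showing that any $x_r\notin N_{s,N}$ is separated from $N_{s,N}$ by an open set supplied by (iii) or (iv) (the relevant cases being: $r$ incomparable with $s$; $r$ a proper prefix of $s$; and $r$ extending $s^\frown m$ with $m<N$). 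Hence every $N_{s,N}$ is clopen in $S$. These sets form a basis: given $x_r\in S$ and an open $V\ni x_r$ in $X$, choose $N$ with $B(x_r,N)\subseteq V$; then $x_r\in N_{r,N}$ and, by (ii), $N_{r,N}\subseteq B(x_r,N)\cap S\subseteq V$. And $S$ has no isolated point: for any open $V\ni x_s$, choosing $N$ with $B(x_s,N)\subseteq V$, condition (ii) puts every $x_{s^\frown n}$ with $n\ge N$ into $V$, and these differ from $x_s$. This gives all the properties of $S$ needed for the first paragraph.

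The hard part is really settling on the invariants (i)--(iv): one has to foresee that the $N_{s,N}$ are the right candidates for a clopen basis and that making them simultaneously closed in $S$ and a neighborhood basis is exactly what forces requiring, all at once, that the siblings $U_{s^\frown n}$ be pairwise disjoint, have closures missing the parent point $x_s$, and shrink into the basic neighborhoods $B(x_s,n)$ of $x_s$. Once the invariants are fixed, the hypothesis is used in just one place: $x_s\notin D(X)$ is precisely what lets one branch away from $x_s$ inside any neighborhood of $x_s$, and density of $X\setminus D(X)$ in $X$ is precisely what lets this be iterated along the whole tree $\omega^{<\omega}$.
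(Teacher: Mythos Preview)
Your proof is correct and follows essentially the same strategy as the paper: build a tree-indexed family of points and nested open sets, then verify that the resulting set $S$ is a countable perfect zero-dimensional $T_2$-space, hence homeomorphic to $\mathbb{Q}$. The only notable difference is that the paper indexes over the binary tree $2^{<\omega}$ and sets $x_{\sigma\diamond 0}=x_\sigma$ (so the parent point recurs along the leftmost branch and the clopen basic sets are simply the $U_\sigma\cap S$), whereas you index over $\omega^{<\omega}$ with all $x_s$ distinct and must carve out the clopen sets $N_{s,N}$ explicitly; this costs a little more bookkeeping but is equally valid.
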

\begin{proof}
Note that if $x \in X\setminus D(X)$ and $U$ is any open set containing $x$, then there exists non-empty open sets $V,W \subseteq U$ with $x\in V$ and $V\cap W=\emptyset$.

In the following, we denote the length of a sequence $\sigma\in 2^{<\omega}$ by $|\sigma|$. We associate each $\sigma\in 2^{<\omega}$ with an element $x_\sigma\in X\setminus D(X)$ and open set $U_\sigma\subseteq X$ containing $x_\sigma$ as follows. For the empty sequence $\varepsilon$ choose any $x_{\varepsilon}\in X\setminus D(X)$ and let $U_{\varepsilon}=X$. 

Next let $\sigma\in 2^{<\omega}$ be given and assume $x_\sigma\in X\setminus D(X)$ and $U_\sigma$ have been defined. Let $U,V\subseteq B(x_\sigma,|\sigma|)\cap U_\sigma$ be non-empty open sets such that $x_\sigma\in U$ and $U\cap V=\emptyset$. Since $V$ is non-empty and $X\setminus D(X)$ is dense, there exists some $y\in V\setminus D(X)$. Let $x_{\sigma\diamond 0} = x_{\sigma}$, $U_{\sigma\diamond 0} = U$, $x_{\sigma\diamond 1} = y$, and $U_{\sigma\diamond 0} = V$.

Let $S=\{x_\sigma\,|\, \sigma\in 2^{<\omega}\}$. A simple inductive argument shows that $U_\sigma\cap S$ is clopen in $S$ for each $\sigma\in 2^{<\omega}$. We show that $S$ is a perfect zero-dimensional $T_2$-space. Fix any $\sigma \in 2^{<\omega}$ and open $U\subseteq S$ containing $x_\sigma$. Let $n\in\omega$ be large enough that $B(x_\sigma,n)\cap S\subseteq U$. We can append a finite number of $0$'s to the end of $\sigma$ to obtain a sequence $\sigma'$ with $|\sigma'|\geq n$ and $x_{\sigma'}=x_{\sigma}$. Then $x_{\sigma'\diamond 1}\not=x_\sigma$ and $x_{\sigma'\diamond 1}\in B(x_\sigma,n)\cap S\subseteq U$. It follows that $\{x_{\sigma}\}$ is not open in $S$, so $S$ is a perfect space. Furthermore, $U_{\sigma'\diamond 0}\cap S$ is a clopen set containing $x_\sigma$ and contained in $U$, which implies that $S$ is a zero-dimensional $T_2$-space.

It follows that $S$ is a non-empty countable perfect metrizable space, hence $S$ is homeomorphic to $S_2$ (see Exercise 7.12 in \cite{kechris}).
\end{proof}

Combining the previous three lemmas with the observation that every subset of a countable $T_D$-space is $\bdelta 2$, we obtain the following.

\begin{thm}\label{thrm:canonical_perfectTD}
If $X$ is a non-empty countably based perfect $T_D$-space with countably many points, then $X$ contains a perfect $\bdelta 2$-subspace homeomorphic to either $S_D$, $S_1$, or $S_2$.
\qed
\end{thm}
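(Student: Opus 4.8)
The plan is to assemble the three preceding lemmas into an exhaustive case analysis and then invoke the characterization theorem at the start of this section to upgrade the subspace we obtain to a $\bdelta 2$-subset of $X$.

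First I would apply the first lemma of this section to $X$. Either $X$ already contains a perfect subspace homeomorphic to $S_D$, and we are finished, or $X$ contains a non-empty perfect $T_1$-subspace $Y$. In the latter case $Y$, being a subspace of a countably based space with countably many points, is itself countably based and has countably many points; together with the perfectness and the $T_1$-separation supplied by the lemma, this means the hypotheses of the remaining two lemmas are all available for $Y$ in place of $X$.

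Second I would run the dichotomy on $D(Y)$. Either $D(Y)$ has non-empty interior in $Y$, and the second lemma yields a perfect subspace of $Y$ homeomorphic to $S_1$; or $D(Y)$ has empty interior in $Y$, in which case $Y\setminus D(Y)$ is dense in $Y$ (since the closure of $Y\setminus D(Y)$ in $Y$ is the complement of the interior of $D(Y)$), and the third lemma yields a perfect subspace of $Y$ homeomorphic to $S_2$. Each such subspace is also a subspace of $X$, so in every case we have produced a perfect subspace $Z\subseteq X$ homeomorphic to $S_D$, $S_1$, or $S_2$.

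Finally I would note that $X$ is a countably based $T_D$-space with countably many points, so by the implication $(1)\Rightarrow(3)$ of the first theorem of this section every subset of $X$ lies in $\bdelta 2(X)$; in particular $Z$ does, and $Z$ is the desired perfect $\bdelta 2$-subspace. There is no real obstacle here beyond bookkeeping: the only points deserving care are that the dichotomy ``$D(Y)$ has non-empty interior'' versus ``$Y\setminus D(Y)$ is dense in $Y$'' is genuinely exhaustive, and that the $\bdelta 2$-membership of $Z$ comes for free from the earlier characterization rather than needing a separate verification for each of $S_D$, $S_1$, and $S_2$.
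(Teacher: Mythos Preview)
Your proposal is correct and follows essentially the same approach as the paper: the paper's own proof is simply the one-line remark ``Combining the previous three lemmas with the observation that every subset of a countable $T_D$-space is $\bdelta 2$,'' and you have spelled out precisely that assembly, including the exhaustive dichotomy on $D(Y)$ and the appeal to the $(1)\Rightarrow(3)$ implication of the characterization theorem for the $\bdelta 2$-membership.
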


\section{Completely Baire spaces}

A space is a \emph{Baire space} if every intersection of countably many dense open subsets is dense. A space is \emph{completely Baire} if every closed subspace is a Baire space. In this section we provide some characterizations of countably based $T_0$-spaces that are completely Baire. In particular, we show that a countably based $T_0$-space is completely Baire if and only if it does not contain a non-empty $\bpi 2$-subset which is a countable perfect $T_D$-space. 

The following theorem is a generalization of some results by W. Hurewicz \cite{hurewicz}.

\begin{thm}\label{thrm:completelyBaire_characterization}
The following are equivalent for a countably based $T_0$-space $X$:
\begin{enumerate}
\item
$X$ is completely Baire,
\item
$X$ does not contain a non-empty $\bpi 2$-subset which is a countable perfect $T_D$-space,
\item
$X$ does not contain a $\bpi 2$-subset homeomorphic to $S_D$, $S_1$, or $S_2$,
\item
Every $\bpi 2$-subspace of $X$ is a Baire space.
\item
Every $\bpi 2$-subspace of $X$ is a completely Baire space.
\end{enumerate}
\end{thm}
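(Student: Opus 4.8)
The plan is to prove the equivalences around the cycle $(1)\Rightarrow(5)\Rightarrow(4)\Rightarrow(3)\Rightarrow(2)\Rightarrow(1)$. Several of the arrows are routine. The implication $(5)\Rightarrow(4)$ is trivial (a completely Baire space is Baire, being a closed subspace of itself). For $(4)\Rightarrow(3)$ one checks directly that none of $S_D$, $S_1$, $S_2$ is a Baire space: in $S_D$ the sets $\uparrow\!n$ form a decreasing sequence of dense open sets with empty intersection, likewise the cofinite sets in $S_1$, while $S_2$ is meager in itself; hence a $\bpi 2$-subset homeomorphic to one of them would be a $\bpi 2$-subspace that is not Baire. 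For $(3)\Rightarrow(2)$, if $X$ had a nonempty $\bpi 2$-subset $Z$ that is a countable perfect $T_D$-space, then Theorem~\ref{thrm:canonical_perfectTD} would supply a $\bdelta 2$-subspace $W$ of $Z$ homeomorphic to $S_D$, $S_1$, or $S_2$; since $W\in\bpi 2(Z)$, $Z\in\bpi 2(X)$, and a $\bpi 2$-subset of a $\bpi 2$-subspace of $X$ is again $\bpi 2$ in $X$ (using $\bpi 2(Z)=\{S\cap Z\,|\,S\in\bpi 2(X)\}$ together with closure of $\bpi 2(X)$ under finite intersections), $W$ would contradict $(3)$.

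For $(1)\Rightarrow(5)$ I would first record the elementary fact that a locally closed set $L=G\cap F$ (with $G$ open, $F$ closed) having empty interior is nowhere dense: if $Z$ is open and $Z\subseteq\overline L$, then for any $z\in Z$ the open neighborhood $Z$ of $z$ meets $L$, so $Z\cap G\neq\emptyset$, whence $\emptyset\neq Z\cap G\subseteq\overline L\cap G=L$ (as $L$ is closed in $G$), contradicting $\mathrm{int}(L)=\emptyset$. Since a $\bpi 2$-set is a countable intersection of sets of the form (closed)$\cup$(open), its complement is a countable union of locally closed sets, each of which has empty interior when the $\bpi 2$-set is dense; hence a dense $\bpi 2$-subset of any space is comeager, and in particular is a Baire space whenever the ambient space is Baire. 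Now if $X$ is completely Baire and $A$ is any $\bpi 2$-subset of $X$, then $\overline A$ is closed in $X$, hence Baire, and $A$ is a dense $\bpi 2$-subset of $\overline A$, hence Baire; applying this to $A$ and to each closed subspace of $A$ (which is again $\bpi 2$ in $X$) shows that every $\bpi 2$-subspace of $X$ is completely Baire, which is $(5)$.

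The heart of the matter is $(2)\Rightarrow(1)$, which I would obtain from the following Hurewicz-type lemma: \emph{if $A$ is a $\bpi 2$-subset of $X$ that is not a Baire space, then $X$ contains a nonempty $\bpi 2$-subset that is a countable perfect $T_D$-space.} Granting this, a failure of $(1)$ gives a closed — hence $\bpi 2$ — subset of $X$ that is not Baire, and the lemma then contradicts $(2)$. To prove the lemma, observe that failure of the Baire property yields a nonempty open $V\subseteq A$ together with an increasing sequence $N_0\subseteq N_1\subseteq\cdots$ of closed nowhere-dense subsets of $V$ with $V=\bigcup_n N_n$; moreover $V=A\cap O$ for some open $O\subseteq X$, and since $\bpi 2(X)$ is stable under intersection with open sets, $V$ is $\bpi 2$ in $X$. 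Inside $V$ one carries out a recursive fusion, attaching points $x_s\in V$ to the nodes $s$ of a countable tree, shrinking neighborhoods along branches by the canonical basic sets $B(x_s,|s|)$ of Definition~\ref{def:basis} and choosing $x_s$ outside $\overline{N_{|s|}}$; then $S=\{x_s\}$ is a countable perfect subspace of $V$, and if the construction is arranged so that the branch limits land outside $V$, then $S$ is closed in $V$ and hence $\bpi 2$ in $X$.

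The step I expect to be the main obstacle is forcing the constructed $S$ to be $T_D$ at the same time, since countability and perfectness of a $\bpi 2$-subspace do not imply the $T_D$-axiom in general (the space $S_0$ is precisely the obstruction). This requires the neighborhood bookkeeping to be set up so that each $x_s$ has only finitely many specialization-predecessors among the points of $S$; together with the $T_0$-axiom this makes every singleton of $S$ equal to the intersection of an open set with its closure. Alternatively one can organize the construction as a trichotomy mirroring the three lemmas of Section~\ref{sec:countableTDspaces} — a chain case and two cases governed by the $D(\cdot)$-operator from that section — and build a copy of $S_D$, $S_1$, or $S_2$ directly, in which case the $T_D$-property comes for free and the remaining work is checking that one of the three cases always applies and that the resulting subspace is $\bpi 2$ in $X$.
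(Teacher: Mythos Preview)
Your routine implications $(5)\Rightarrow(4)\Rightarrow(3)\Rightarrow(2)$ are fine, and your direct proof of $(1)\Rightarrow(5)$ via ``a dense $\bpi 2$-subset is comeager, hence Baire inside a Baire space'' is correct and is actually cleaner than the paper's route: the paper first establishes $(1)\Leftrightarrow(2)$ and only then bootstraps $(1)\Rightarrow(5)$ by observing that a non-completely-Baire $\bpi 2$-subspace would produce a countable perfect $T_D$-set in $\bpi 2(X)$. Your argument avoids this detour entirely.

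The gap is in $(2)\Rightarrow(1)$. Two of your proposed moves do not survive the non-metrizable setting. First, ``arrange the branch limits to land outside $V$, then $S$ is closed in $V$'' is not a valid criterion for closedness in a general countably based $T_0$-space: closures are governed by the specialization order, not by sequential limits, and a countable set chosen this way will typically have many extra points in its closure. The paper never tries to make $S$ closed. Second, you correctly flag $T_D$ as the obstacle but do not supply the mechanism; the trichotomy alternative you mention would amount to redoing Section~\ref{sec:countableTDspaces} inside an ambient non-$T_D$ space while simultaneously controlling the $\bpi 2$ complexity, which is considerably more work than what is actually needed.

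The paper's construction handles both issues at once with a single idea. After reducing to $\bigcap_i U_i=\emptyset$ with each $U_i$ dense open, it builds a \emph{linear} sequence $(x_n)$ rather than a tree: fix a scheduling function $f\colon\omega\to\omega$ with $f(n)\le n$ and infinite fibers, set $V_n=B(x_{f(n)},n)\setminus Cl(\{x_{f(n)}\})$ (nonempty, since otherwise $x_{f(n)}$ would lie in every dense open set), and choose
\[
x_{n+1}\in V_n\cap U_0\cap\cdots\cap U_n.
\]
The clause ``$\setminus Cl(\{x_{f(n)}\})$'' together with the infinite fibers of $f$ gives perfectness. The clause ``$\cap\, U_0\cap\cdots\cap U_n$'' is what you are missing: it forces $Cl(\{x\})\cap Y$ to be finite for every $x\in Y$ (since $x$ fails some $U_m$, and every later $x_n$ lies in $U_m$), which is exactly the $T_D$ property. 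The same clause also delivers the $\bpi 2$ complexity for free, via
\[
Y=\bigcap_{n\in\omega}\Bigl(\{x_0,\ldots,x_n\}\cup\bigcap_{i\le n}U_i\Bigr),
\]
each factor being $\bpi 2$ because finite subsets of countably based $T_0$-spaces are $\bpi 2$. No closedness argument is needed.
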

\begin{proof}
$(1 \Rightarrow 2)$. Assume there is some $Y\in \bpi 2(X)$ which is a countable perfect $T_D$-space. Then we can write $Y=\bigcap_{i\in\omega} (A_i\cup U_i)$, with $A_i\subseteq X$ closed and $U_i\subseteq X$ open. Let $C$ be the closure of $Y$ in $X$, and let $\{y_i\}_{i\in\omega}$ be an enumeration of the elements of $Y$.

Let $W_i = C\setminus Cl_C(\{y_i\})$, where $Cl_C(\cdot)$ is the closure operator in the subspace $C$ of $X$. We claim that $W_i$ is dense and open in the subspace $C$. For a contradiction, assume $U$ is a non-empty relatively open subset of $C$ and $U\cap W_i=\emptyset$. Since $Y$ is dense in $C$, there must be some $y\in U\cap Y$. Then $y\not\in W_i$, so $y\in Cl_C(\{y_i\})$ hence $y_i\in U$. $Y$ is a $T_D$-space, so there is some $V$ relatively open in $C$ such that $\{y_i\}=V\cap Cl_C(\{y_i\})\cap Y$. Then $\{y_i\}=U\cap V\cap Y$, contradicting $Y$ being a perfect space. Therefore, $W_i$ is a dense open subset of the subspace $C$.

Next define $V_i=(Int_C(A_i)\cup U_i)\cap C$, where $Int_C(\cdot)$ is the interior operator in the subspace $C$. We claim that $V_i$ is also dense and open in the subspace $C$. Let $U$ be a non-empty open set in the subspace $C$. If $U\setminus A_i$ is non-empty, then there is $y\in (U\setminus A_i)\cap Y$ (since $Y$ is dense in $C$), hence $y\in U\cap (U_i\cap Y)$ so $y\in U\cap V_i$. On the other hand, if $U\setminus A_i$ is empty then $U\subseteq Int_C(A_i)$, so $U\cap V_i$ is non-empty.

Let $S=(\bigcap_{i\in\omega}W_i)\cap(\bigcap_{i\in\omega}V_i)$. The choice of $W_i$ imply that $S\cap Y=\emptyset$, and the choice of $V_i$ imply $S\subseteq Y$. Therefore, $S$ is empty. But $S$ is a countable intersection of dense open subsets of $C$, so $C$ is not a Baire space. Therefore, $X$ is not completely Baire.

$(2 \Rightarrow 1)$. We show that if $X$ is not a completely Baire space then there is a countable perfect $T_D$-space $Y\in \bpi 2(X)$. For any closed set $C\subseteq X$, any $\bpi 2$-subset of $C$ is a $\bpi 2$ subset of $X$, so it suffices to only consider the case that $X$ is not a Baire space. So assume there are dense open $U_i\subseteq X$ such that $\bigcap_{i\in\omega}U_i$ is not dense in $X$. Then there is non-empty open $U\subseteq X$ such that $U\cap \bigcap_{i\in\omega}U_i$ is empty. Clearly $U_i\cap U$ is dense and open in the subspace $U$, and again any $\bpi 2$-subset of $U$ is a $\bpi 2$ subset of $X$. We can therefore assume without loss of generality that $\bigcap_{i\in\omega}U_i$ is empty in $X$.

Fix a countable basis $\{B_i\}_{i\in\omega}$ of open subsets of $X$ and define $B(\cdot,\cdot)$ as in Definition~\ref{def:basis}. Fix a function $f\colon\omega\to\omega$ such that $f(n)\leq n$ and $f^{-1}(\{n\})$ is infinite for each $n\in\omega$.

Choose any $x_0\in X$. For $n\geq 0$, let $V_n = B(x_{f(n)},n)\setminus Cl(\{x_{f(n)}\})$, where $Cl(\cdot)$ is the closure operator on $X$. If $V_n$ was empty then every open set intersecting $B(x_{f(n)},n)$ would contain $x_{f(n)}$, which would imply $x_{f(n)}$ is in the intersection of the dense open sets $U_i$, a contradiction. Therefore, $V_n$ is a non-empty open set, so $V_n\cap U_0$ is non-empty open, and continuing inductively we have that $V_n\cap U_0\cap \cdots\cap U_n$ is non-empty. Let $x_{n+1}$ be any element of this intersection.

Define $Y=\{x_n\,|\, n\in\omega\}$. For each $x\in Y$ and $n\in\omega$, there is some $y\in Y\cap B(x,n)$ that is not in the closure of $\{x\}$. Thus every open neighborhood of $x$ contains a point in $Y$ that is distinct from $x$, hence $Y$ is a perfect space. To see that $Y$ is a $T_D$-space, let $x\in Y$ be given and let $m\in\omega$ be the smallest number such that $x\not\in U_0\cap \ldots\cap U_m$. Then for all $n> m$, our choice of $x_n$ guarantees that it is in an open set that does not contain $x$, hence $x_n$ is not in the closure of $\{x\}$. Thus $Cl(\{x\})\cap Y$ is finite, hence there is an open neighborhood $U$ of $x$ such that $Cl(\{x\})\cap U \cap Y = \{x\}$. It follows that $Y$ is a $T_D$-space.

It only remains to show that $Y\in\bpi 2(X)$. For $n\in\omega$, define 
\[A_n = \{x_0,\ldots,x_n\}\cup \bigcap_{i\leq n} U_i.\]
Since finite subsets of countably based $T_0$-spaces are $\bpi 2$, it is easy to see that $A_n\in\bpi 2(X)$. Then $A=\bigcap_{n\in\omega}A_n$ is also in $\bpi 2(X)$. By our construction, $Y$ is a subset of $A$. Now let $x\in A$ be given. Then there is some $n\in\omega$ such that $x\not\in U_0\cap \cdots\cap U_n$. Since $x\in A_n$, it follows that $x\in\{x_0,\ldots,x_n\}\subseteq Y$. Therefore, $Y=A$ is a countable perfect $T_D$-space in $\bpi 2(X)$.

$(2 \Leftrightarrow 3)$. Immediate from Theorem \ref{thrm:canonical_perfectTD} and the fact that $S_D$, $S_1$, and $S_2$ are all countable perfect $T_D$-spaces.

$(4 \Leftrightarrow 5)$. The implication $(5\Rightarrow 4)$ is trivial, and the implication $(4 \Rightarrow 5)$ holds because if $Y\in \bpi 2(X)$ and $Z\subseteq Y$ is closed, then $Z\in \bpi 2(X)$ and is therefore a Baire space.

$(5 \Leftrightarrow 1)$. The implication $(5\Rightarrow 1)$ is trivial. For the converse, if $Y\in \bpi 2(X)$ is not completely Baire, then using the equivalence $(1 \Leftrightarrow 2)$ already shown above, there is a non-empty countable perfect $T_D$-space $Z\in \bpi 2(Y)$. But then $Z\in\bpi 2(X)$, hence $X$ is not completely Baire.
\end{proof}

The reader should note that in the presence of stronger separation axioms we can reduce the upper bound on the Borel complexity of the countable perfect $T_D$-subspace mentioned in Theorem \ref{thrm:completelyBaire_characterization}. For example, if $X$ is a countably based $T_D$-space then every countable subset of $X$ is a $\bsigma 2$-set, hence $X$ is completely Baire if and only if it does not contain a $\bdelta 2$-subspace homeomorphic to $S_D$, $S_1$, or $S_2$. Hurewicz's original theorem shows that a separable metrizable space is completely Baire if and only if it does not contain a closed subspace homeomorphic to $S_2$.

We can also give a slightly stronger characterization for countably based sober spaces. Recall that a non-empty closed set is \emph{irreducible} if it is not the union of two proper closed subsets. A space is \emph{sober} if and only if every irreducible closed set equals the closure of a unique point. Every sober space is a $T_0$-space, and every $T_2$-space is sober. Sobriety is incomparable with the $T_1$ separation axiom. Neither $S_D$ nor $S_1$ is sober, because in both cases the entire space is an irreducible closed set which is not the closure of any singleton. Theorem~\ref{thrm:sober_characterization} below shows that these two spaces are the canonical examples of non-sober countably based spaces.

\begin{lem}\label{lem:pi2subspace_of_sober}
Every $\bpi 2$-subspace of a sober space is sober.
\end{lem}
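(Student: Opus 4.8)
The plan is to reduce the soberness of a $\bpi 2$-subspace $Y$ of a sober space $X$ to the soberness of $X$ by passing between irreducible closed subsets of $Y$ and of $X$ via the closure operator. First I would record the easy facts: $Y$ embeds in the $T_0$-space $X$, hence is $T_0$, so any point closure in $Y$ has a unique generator and the uniqueness half of soberness is automatic. I would also fix a representation $Y=\bigcap_{i\in\omega}(A_i\cup U_i)$ with each $A_i$ closed and each $U_i$ open in $X$, which exists because $X\setminus Y\in\bsigma 2(X)$. Finally I would note the routine subspace fact that a set $F$ closed in $Y$ has the form $G\cap Y$ for some closed $G\subseteq X$, and therefore $F=Cl_X(F)\cap Y$ and $F$ is dense in $Cl_X(F)$.

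Next, given an irreducible closed $F\subseteq Y$, I would put $C=Cl_X(F)$ and check that $C$ is irreducible in $X$: if $C=D_1\cup D_2$ with $D_1,D_2$ closed and proper in $X$, then $F=(D_1\cap Y)\cup(D_2\cap Y)$ writes $F$ as a union of two relatively closed sets, so irreducibility of $F$ gives $F=D_j\cap Y$ for some $j$, whence $C=Cl_X(F)\subseteq D_j$, contradicting properness of $D_j$. By soberness of $X$, $C=Cl_X(\{x\})$ for some (unique) $x\in X$, and I would recall the standard fact that a nonempty open $V\subseteq X$ meets $C$ if and only if $x\in V$.

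The crux is to show $x\in Y$. Supposing $x\notin Y$, there is an index $i$ with $x\notin A_i$ and $x\notin U_i$. Then $(X\setminus A_i)\cap C$ is a nonempty relatively open subset of $C$ (it contains $x$), so by density of $F$ in $C$ it meets $F$, giving some $y\in F$ with $y\notin A_i$; since $y\in Y\subseteq A_i\cup U_i$ we get $y\in U_i$, so $U_i\cap C\neq\emptyset$, which forces $x\in U_i$ --- a contradiction. Hence $x\in Y$, and then $Cl_Y(\{x\})=Cl_X(\{x\})\cap Y=C\cap Y=F$, which completes the proof. I expect this last step --- extracting a point of $Y$ from the generic point of $C$ using the $\bpi 2$ representation together with the density of $F$ in its closure --- to be the only real obstacle; everything else is bookkeeping about subspace closures.
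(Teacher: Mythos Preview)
Your proof is correct and follows essentially the same approach as the paper: both take the closure in the ambient sober space of an irreducible closed subset of the subspace, show this closure is irreducible, obtain its generic point $x$, and then use the $\bpi 2$/$\bsigma 2$ structure together with the fact that an open set meets $Cl(\{x\})$ iff it contains $x$ to force $x$ into the subspace. The only cosmetic differences are that the paper argues by contradiction (assuming the irreducible set is not a point closure) and applies the $\bsigma 2$ decomposition to the complement of the irreducible set itself, whereas you argue directly and use the $\bpi 2$ decomposition of $Y$; the underlying mechanism is identical.
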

\begin{proof}
Assume $Y$ is a sober space, and assume for a contradiction that there is some $Z\in\bpi 2(Y)$ that is not sober. Let $X\subseteq Z$ be an irreducible closed subset of $Z$ that is not equal to the closure of any singleton. Clearly, $X\in\bpi 2(Y)$. 

Let $Cl(\cdot)$ be the closure operator for $Y$ and define $C = Cl(X)$. Assume $A_1$ and $A_2$ are closed subsets of $Y$ such that $C=A_1\cup A_2$. Then $X=(A_1\cap X)\cup(A_2\cap X)$, so the irreducibility of $X$ implies $X\subseteq A_1$ or $X\subseteq A_2$. Since $C$ is the closure of $X$ in $Y$, it follows that $C=A_1$ or $C=A_2$, hence $C$ is an irreducible closed subset of $Y$. Therefore, the sobriety of $Y$ implies $C=Cl(\{x\})$ for some $x\in Y$. Clearly $x\not\in X$ because by assumption $X$ is not the closure of any singleton.

Since $Y\setminus X$ is a $\bsigma 2$-subset of $Y$ containing $x$, there must be some open $U\subseteq Y$ satisfying $x\in U\cap Cl(\{x\}) \subseteq Y\setminus X$, but the existence of such an open set $U$ contradicts $Cl(\{x\}) = Cl(X)$.
\end{proof}

\begin{thm}\label{thrm:sober_characterization}
A countably based $T_0$-space is sober if and only if it does not contain a $\bpi 2$-subspace homeomorphic to $S_D$ or $S_1$.
\end{thm}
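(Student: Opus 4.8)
For the forward direction, if $X$ is sober then by Lemma~\ref{lem:pi2subspace_of_sober} every $\bpi 2$-subspace of $X$ is sober, while neither $S_D$ nor $S_1$ is sober -- in each of these two spaces the whole space is an irreducible closed set that is not the closure of any point -- so $X$ contains no $\bpi 2$-subspace homeomorphic to $S_D$ or $S_1$. All the work is in the reverse direction.

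So suppose $X$ is not sober. Fix an irreducible closed $C\subseteq X$ that is not the closure of any point; since $C$ is closed we have $\bpi 2(C)\subseteq\bpi 2(X)$, so it suffices to produce the desired subspace inside $C$, and I will simply rename $C$ as $X$. Thus $X$ is countably based and $T_0$, irreducible (every non-empty open set is dense), has no maximum in the specialization order (equivalently, is not the closure of any point), and is therefore perfect (a singleton open set would be dense, making its point a maximum). Fix a countable basis $\{B_i\}_{i\in\omega}$ of non-empty open sets and define $B(\cdot,\cdot)$ as in Definition~\ref{def:basis}. Each $B_i$ is dense by irreducibility, and $\bigcap_{i\in\omega}B_i=\emptyset$: a point lying in every $B_i$ would lie in every non-empty open set, hence would have every point of $X$ below it in the specialization order, contradicting the absence of a maximum. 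So the $B_i$ form a countable family of dense open sets with empty intersection.

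The plan is now to re-run the construction from the proof of Theorem~\ref{thrm:completelyBaire_characterization}, implication $(2\Rightarrow 1)$, taking the dense open sets there to be these $B_i$. With a function $f\colon\omega\to\omega$ satisfying $f(n)\le n$ and having infinite fibres, one picks $x_0$ and then $x_{n+1}$ in the non-empty open set $\big(B(x_{f(n)},n)\setminus Cl(\{x_{f(n)}\})\big)\cap B_0\cap\dots\cap B_n$, distinct from $x_0,\dots,x_n$. Exactly as in that proof, $Y=\{x_n\mid n\in\omega\}$ is a countable perfect $T_D$-space, and $Y=\bigcap_{n\in\omega}A_n\in\bpi 2(X)$ with $A_n=\{x_0,\dots,x_n\}\cup(B_0\cap\dots\cap B_n)$, the equality $Y=\bigcap_n A_n$ relying on $\bigcap_i B_i=\emptyset$. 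The new observation -- and the heart of the argument -- is that $Y$ is \emph{dense} in $X$: since $x_{n+1}\in B_0\cap\dots\cap B_n\subseteq B_i$ for all $n\ge i$, the set $\{x_n\}$ meets every $B_i$, hence every non-empty open set. As a dense subspace of an irreducible space is irreducible, $Y$ is an irreducible, countable, perfect $T_D$-space.

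It remains to see that such a $Y$ contains a subspace homeomorphic to $S_D$ or $S_1$, and this is precisely where irreducibility excludes the alternative $S_2$ left open by Theorem~\ref{thrm:canonical_perfectTD}. Apply the first lemma of Section~\ref{sec:countableTDspaces} to $Y$: if $Y$ contains a perfect subspace homeomorphic to $S_D$ we are done, since every subset of a countable $T_D$-space is in $\bdelta 2$, so that subspace lies in $\bpi 2(Y)\subseteq\bpi 2(X)$; otherwise, by the proof of that lemma every point of $Y$ lies below a maximal point, so $Max(Y)$ is dense in $Y$. In this case $Max(Y)$, being a dense subspace of the irreducible space $Y$, is itself irreducible, so every non-empty relatively open subset of $Max(Y)$ is dense in $Max(Y)$; equivalently $x\lhd Max(Y)$ for every $x\in Max(Y)$, so $D(Max(Y))=Max(Y)$ has non-empty interior. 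As $Max(Y)$ is a countably based countable perfect $T_1$-space, the lemma of Section~\ref{sec:countableTDspaces} on spaces whose $D(\cdot)$ has non-empty interior produces a subspace homeomorphic to $S_1$, again a $\bdelta 2$-subset of $Y$ and hence a $\bpi 2$-subset of $X$. Either way $X$ contains a $\bpi 2$-subspace homeomorphic to $S_D$ or $S_1$. The only genuinely new point -- and the main obstacle -- is the density, hence irreducibility, of $Y$, which is what rules out the metrizable space $S_2$ that Theorem~\ref{thrm:canonical_perfectTD} alone permits; the remaining care needed is minor, namely reading off from the proof of the first lemma of Section~\ref{sec:countableTDspaces} that, in the absence of a copy of $S_D$, the maximal points are dense.
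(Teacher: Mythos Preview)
Your proof is correct and takes a genuinely different route from the paper's. The paper embeds $X$ into a quasi-Polish space $Y$, uses sobriety of $Y$ (Corollary~39 of \cite{debrecht2013}) to find the ``missing'' generic point $x\in Y\setminus X$ with $Cl(\{x\})=Cl(X)$, and then builds the sequence $\{x_n\}$ using a shrinking system of neighbourhoods of $x$; the $T_2$ alternative is then excluded by a direct combinatorial observation that any two basic neighbourhoods of points in $A$ eventually contain all later $x_n$, so $A$ has no infinite Hausdorff subspace. Your argument is entirely intrinsic: you observe that in an irreducible space with no maximum the basic open sets themselves form a family of dense opens with empty intersection, feed these into the construction of Theorem~\ref{thrm:completelyBaire_characterization}, and then use the extra fact that the resulting $Y$ is \emph{dense} (hence irreducible) to propagate irreducibility down to $Max(Y)$, which forces $D(Max(Y))=Max(Y)$ and hence the $S_1$ alternative. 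This avoids both the external embedding and the appeal to sobriety of quasi-Polish spaces, at the cost of reaching into the proofs (rather than just the statements) of the lemmas in Section~\ref{sec:countableTDspaces} to extract that in the non-$S_D$ case the maximal points are dense. Both approaches are clean; yours is arguably the more elementary.
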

\begin{proof}
If $X$ is a sober space, then it follows from Lemma~\ref{lem:pi2subspace_of_sober} that $X$ does not contain a $\bpi 2$-subspace homeomorphic to $S_D$ or $S_1$.

For the converse, assume $X$ is a non-sober countably based $T_0$-space. Since every closed subset of $X$ is a $\bpi 2$-subset, we can assume without loss of generality that $X$ itself is an irreducible closed set that does not equal the closure of any singleton subset. Furthermore, we can assume that $X\subseteq Y$ for some quasi-Polish space $Y$ (in particular, we can take $Y=\cal P(\omega)$, which is quasi-Polish and well-known to be universal for countably based $T_0$-spaces). Note that every quasi-Polish space is sober (Corollary~39 of \cite{debrecht2013}).

Let $Cl(\cdot)$ be the closure operator for $Y$. Using the same argument as in the proof of Lemma~\ref{lem:pi2subspace_of_sober}, there is some $x\in Y\setminus X$ satisfying $Cl(\{x\}) = Cl(X)$.

Fix a countable basis $\{B_i\}_{i\in\omega}$ of open subsets of $Y$ and define $B(\cdot,\cdot)$ as in Definition~\ref{def:basis}. Note that every open neighborhood $V$ of $x$ satisfies $V\cap X\not=\emptyset$ because $Cl(\{x\}) = Cl(X)$. Furthermore, the irreducibility of $X$ implies that if $\{U_i\}_{i\in F}$ is a finite collection of open subsets of $Y$ that intersect $X$, then $\bigcap_{i\in F} U_i$ also intersects $X$.

We inductively define an infinite sequence $\{x_n\}_{n\in\omega}$ of distinct elements of $X$. Define $V_0 =B(x,0)$ and let $x_0$ be any point in $V_0\cap X$. For $n\geq 0$, let $U_{n+1} \subseteq V_n$ be an open subset of $Y$ containing $x$ that does not contain any $x_i$ for $i\leq n$. Such an open set exists because each $x_i$ is distinct from $x$ and contained in $Cl(\{x\})$. Let $V_{n+1} = U_{n+1} \cap B(x,n+1) \cap \bigcap_{i\leq n}B(x_i,n+1)$. Choose $x_{n+1}$ to be any element of $V_{n+1} \cap X$, which is possible because of the observation in the previous paragraph.

Then $A = \{x_n \,|\, n\in\omega\}$ is an infinite subset of $X$, and $\{V_n\}_{n\in\omega}$ is a decreasing sequence of neighborhoods of $x$ satisfying $\{x\} = Cl(\{x\}) \cap \bigcap_{n\in\omega}V_n$. In particular,  $\bigcap_{n\in\omega}V_n$ has empty intersection with $X$. As in the proof of Theorem~\ref{thrm:completelyBaire_characterization}, we can define
\[A_n = \{x_0,\ldots,x_n\}\cup \bigcap_{i\leq n} V_i\]
and obtain that $A = X\cap \bigcap_{n\in\omega} A_n$ is in $\bpi 2(X)$. 

By our construction, $Cl(\{x_n\})\cap A$ is finite for each $n\in\omega$ because $x_n\not\in V_{n+1}$ but each $x_i \in V_{n+1}$ for $i>n$. Therefore, $A$ is a countable $T_D$-space. Furthermore, $A$ is a perfect space because for any $x_i$ and $n\geq i$ we have $x_{n+1}\in B(x_i,n+1)$, hence $\{x_i\}$ is not open in $A$. For a similar reason, $A$ does not contain any infinite $T_2$-subspaces because for any $i,j\in\omega$ and $n\geq i,j$ we have that all but finitely many elements of $A$ are in $B(x_i, n+1)\cap B(x_j,n+1)$.

By applying Theorem~\ref{thrm:canonical_perfectTD}, there is $S\in \bdelta 2(A)$ that is homeomorphic to either $S_D$, $S_1$, or $S_2$. As we have just shown that $A$ does not contain any infinite $T_2$-spaces, $S$ must be homeomorphic to either $S_D$ or $S_1$. Clearly, $S\in\bpi 2(X)$ because $S\in \bdelta 2(A)$ and $A\in \bpi 2(X)$.
\end{proof}

Theorems \ref{thrm:completelyBaire_characterization} and \ref{thrm:sober_characterization} imply the following.

\begin{thm}\label{thrm:sober_not_completelyBaire_then_S2}
A countably based sober space is completely Baire if and only if it does not contain a $\bpi 2$-subspace homeomorphic to $S_2$.
\qed
\end{thm}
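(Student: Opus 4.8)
The plan is to derive this statement directly from the two preceding results, with no new construction required; it is purely a matter of combining Theorems~\ref{thrm:completelyBaire_characterization} and~\ref{thrm:sober_characterization}.

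First I would dispatch the easy direction. Suppose $X$ is a countably based sober space that is completely Baire. Since every sober space is a $T_0$-space, the equivalence of conditions (1) and (3) in Theorem~\ref{thrm:completelyBaire_characterization} applies, and it tells us that $X$ contains no $\bpi 2$-subspace homeomorphic to $S_D$, $S_1$, or $S_2$. In particular, $X$ contains no $\bpi 2$-subspace homeomorphic to $S_2$.

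For the converse, suppose instead that $X$ is a countably based sober space that fails to be completely Baire. Again $X$ is a $T_0$-space, so the equivalence of (1) and (3) in Theorem~\ref{thrm:completelyBaire_characterization} forces $X$ to contain a $\bpi 2$-subspace homeomorphic to one of $S_D$, $S_1$, or $S_2$. But $X$ is a countably based sober $T_0$-space, so Theorem~\ref{thrm:sober_characterization} excludes the possibility that this subspace is homeomorphic to $S_D$ or to $S_1$. Hence the subspace in question must be homeomorphic to $S_2$, which is what we wanted.

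The only point demanding any attention is verifying the hypotheses of the two cited theorems, and this reduces to the elementary fact (recorded in the paragraph preceding Lemma~\ref{lem:pi2subspace_of_sober}) that sobriety implies the $T_0$-separation axiom. There is therefore no substantive obstacle here: all of the genuine work has already been carried out in the proofs of Theorems~\ref{thrm:completelyBaire_characterization} and~\ref{thrm:sober_characterization}, and this theorem is simply the observation that for sober spaces the trio $\{S_D, S_1, S_2\}$ collapses to the single space $S_2$.
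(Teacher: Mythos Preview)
Your proposal is correct and matches the paper's approach exactly: the paper states this theorem with a \qed and no proof, remarking only that Theorems~\ref{thrm:completelyBaire_characterization} and~\ref{thrm:sober_characterization} imply it. Your write-up simply spells out that deduction.
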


\section{\texorpdfstring{$\bpi 3$}{Pi03}-spaces}

We will call a space a $\bpi 3$-space if and only if it is homeomorphic to a $\bpi 3$-subset of a quasi-Polish space. In Section 6.1 of \cite{debrecht2013} it was shown that a countably based $T_0$-space admits a bicomplete quasi-metric if and only if it is a $\bpi 3$-space.

The first goal of this section is to show that a $\bpi 3$-space is completely Baire if and only if it is quasi-Polish. It was shown in \cite{debrecht2013} (Corollary 52) that quasi-Polish spaces are Baire spaces, and since every $\bpi 2$-subspace of a quasi-Polish space is quasi-Polish, it follows that every quasi-Polish space is completely Baire. Theorem \ref{thrm:Pi3_completelyBaire} below provides a converse for the class of $\bpi 3$-spaces. Combining our previous results, this implies that a $\bpi 3$-space is either quasi-Polish, or else it contains a $\bpi 2$-subset homeomorphic to $S_D$, $S_1$, or $S_2$. This provides a clear separation between $\bpi 3$ and $\bpi 2$ (i.e., quasi-Polish) spaces.

The following lemma is a generalization of the Baire category theorem, and has been investigated by R. Heckmann \cite{heckmann} and Becher and Grigorieff \cite{becher_grigorieff} in its dual form (i.e., countable intersections of dense $\bpi 2$-sets are dense).

\begin{lem}
If $X$ is a Baire space and $\{A_i\}_{i\in\omega}$ is a countable collection of sets in $\bsigma 2(X)$ satisfying $X= \bigcup_{i\in\omega} A_i$, then some $A_i$ has non-empty interior with respect to $X$.
\end{lem}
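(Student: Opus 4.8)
The plan is to decompose the $\bsigma 2$-sets into locally closed pieces and then run the classical Baire category argument on those pieces. First I would use the definition of $\bsigma 2$ to write each $A_i = \bigcup_{j\in\omega} L_{i,j}$, where $L_{i,j} = U_{i,j}\setminus U'_{i,j} = U_{i,j}\cap(X\setminus U'_{i,j})$ for some open sets $U_{i,j}, U'_{i,j}\subseteq X$; thus each $L_{i,j}$ is locally closed, and the countable family $\{L_{i,j}\}_{i,j\in\omega}$ still covers $X$. Since $L_{i,j}\subseteq A_i$, it then suffices to show that some $L_{i,j}$ has non-empty interior with respect to $X$. (We may assume $X\neq\emptyset$.)

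The crux is an elementary observation: \emph{a locally closed set $L$ with empty interior is nowhere dense}, i.e.\ $\overline{L}$ has empty interior. To prove it, write $L = U\cap C$ with $U$ open and $C$ closed, and suppose $V = \mathrm{int}(\overline{L})$ were non-empty. Since $V$ is open and $V\subseteq\overline{L}$, the set $L\cap V$ is dense in $V$, hence non-empty; and since $\overline{L}\subseteq C$ we have $U\cap V\subseteq U\cap C = L$, so that $U\cap V$ is a non-empty open subset of $L$, contradicting $\mathrm{int}(L)=\emptyset$. Granting this, I would argue by contradiction: if every $L_{i,j}$ had empty interior, then each $\overline{L_{i,j}}$ would be closed with empty interior, so each $X\setminus\overline{L_{i,j}}$ would be dense and open in $X$; since $X$ is a Baire space, $\bigcap_{i,j}(X\setminus\overline{L_{i,j}})$ would be dense, in particular non-empty, and any point of this intersection would lie in no $L_{i,j}$, contradicting $X = \bigcup_{i,j} L_{i,j}$. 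Hence some $L_{i,j}$, and therefore some $A_i$, has non-empty interior.

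I expect the only real content to be the nowhere-density lemma. In particular, passing to the pieces $L_{i,j}$ is essential and not merely cosmetic: a $\bsigma 2$-set with empty interior need not itself be nowhere dense (for instance $\mathbb{Q}$ is a $\bsigma 2$-subset of $\mathbb{R}$ that is dense yet has empty interior), so the result genuinely requires breaking the cover into locally closed sets before invoking the Baire property. The two topological facts used in the lemma --- that $\overline{A}\cap V = \overline{A\cap V}\cap V$ whenever $V$ is open, and consequently that $V\subseteq\overline{L}$ with $V$ open forces $L\cap V$ to be dense in $V$ --- hold in arbitrary topological spaces and are routine to check.
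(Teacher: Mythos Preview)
Your proof is correct and follows essentially the same approach as the paper: both decompose each $A_i$ into countably many locally closed pieces and then apply the Baire property to their closures. The paper phrases the final step directly (some closure $F^i_j$ has non-empty interior $U$, and then $U\cap U^i_j$ is a non-empty open subset of $A_i$), whereas you package the same computation as the lemma ``locally closed with empty interior is nowhere dense'' and argue by contradiction; these are the same argument in slightly different dress.
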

\begin{proof}
Let $A_i = \bigcup_{j\in\omega} (U^i_j \cap C^i_j)$, with $U^i_j$ open and $C^i_j$ closed in $X$. Define $F^i_j = Cl(U^i_j\cap C^i_j)\subseteq Cl(U^i_j)\cap C^i_j$, where $Cl(\cdot)$ is the closure operator on $X$. Since $X$ is a Baire space and $X = \bigcup_{i,j\in\omega} F^i_j$, there is non-empty open $U\subseteq F^i_j$ for some $i,j\in\omega$. The open set $W=U\cap U^i_j$ is non-empty because $U$ intersects the closure of $U^i_j$, and  clearly $W\subseteq U^i_j\cap C^i_j \subseteq A_i$.
\end{proof}

\begin{lem}
Let $X$ be quasi-Polish, $Y \subseteq X$ completely Baire, and assume $A\in\bpi 2(X)$ is such that $A\cap Y=\emptyset$. Then there is $C\in\bsigma 2(X)$ separating $A$ from $Y$ (i.e., $A\subseteq C$ and $C\cap Y=\emptyset$).
\end{lem}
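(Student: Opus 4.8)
The plan is to argue by contradiction: assume that no $C\in\bsigma 2(X)$ separates $A$ from $Y$, and derive that $Y$ fails to be completely Baire, contradicting the hypothesis. By Theorem~\ref{thrm:completelyBaire_characterization}, it suffices to produce a non-empty $\bpi 2$-subset of $Y$ (equivalently, of some closed subspace of $Y$) which is a countable perfect $T_D$-space. Since $A\in\bpi 2(X)$, write $A = \bigcap_{i\in\omega}(U_i\cup F_i)$ with $U_i$ open and $F_i$ closed in $X$; by the remark after the statement that $X$ admits a countable basis, fix a basis $\{B_i\}_{i\in\omega}$ for $X$ and the associated operator $B(\cdot,\cdot)$ from Definition~\ref{def:basis}. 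The non-separation assumption says precisely that for every $\bsigma 2$-set $C$ containing $A$ we have $C\cap Y\neq\emptyset$; equivalently, $A$ is ``$\bsigma 2$-inseparable'' from $Y$ inside $X$, and this property is inherited by the trace on any relatively closed subset. I would make essential use of this inheritance to run an inductive construction, exactly paralleling the inseparability/fusion arguments used in the proof of the $(2\Rightarrow 1)$ direction of Theorem~\ref{thrm:completelyBaire_characterization} and in Theorem~\ref{thrm:sober_characterization}.

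The construction itself: I would build a sequence $\{x_n\}_{n\in\omega}$ of distinct points of $Y$, together with a decreasing chain of basic-type open neighborhoods, so that the resulting set $Z=\{x_n\mid n\in\omega\}$ is (i) perfect, (ii) a $T_D$-space, and (iii) relatively $\bpi 2$ in $Y$. The key point driving the choices is that each $x_n$ must be found inside a non-empty open set that also meets $Y$; this is guaranteed by the $\bsigma 2$-inseparability, since if at some stage the available open region $V$ satisfied $V\cap Y=\emptyset$ along a path that ``traps'' $A$, then one could peel off $V$ and eventually exhibit a $\bsigma 2$-set separating $A$ from $Y$. Concretely, I expect the bookkeeping function $f\colon\omega\to\omega$ with $f(n)\le n$ and infinite fibers (as in Theorem~\ref{thrm:completelyBaire_characterization}) to be used so that infinitely often we shrink into $B(x_{f(n)},n)\setminus Cl(\{x_{f(n)}\})$, forcing perfectness, while the clauses $U_i\cup F_i$ defining $A$ are handled one at a time so that $Cl(\{x\})\cap Z$ stays finite for each $x\in Z$, giving the $T_D$-property. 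For the $\bpi 2$-bound, I would set $A_n = \{x_0,\ldots,x_n\}\cup\bigcap_{i\le n}(\text{relevant open sets})$ and check $Z = Y\cap\bigcap_n A_n$, exactly as in the cited proofs, using that finite subsets of countably based $T_0$-spaces are $\bpi 2$. Once $Z$ is produced, $Z\in\bpi 2(Y)$ is a non-empty countable perfect $T_D$-space, so $Y$ is not completely Baire by Theorem~\ref{thrm:completelyBaire_characterization}, contradiction; hence the desired separator $C$ exists.

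The main obstacle I anticipate is correctly formulating and propagating the $\bsigma 2$-inseparability invariant through the inductive step — i.e., making sure that after committing to a particular shrinking open neighborhood we retain a relatively closed piece of $X$ on which $A$'s trace is still $\bsigma 2$-inseparable from $Y$'s trace. One has to be careful because $\bsigma 2$ sets are not closed under complement, so the naive ``if this region avoids $Y$, discard it'' move must be packaged as: the union of finitely many such discarded $\bsigma 2$ regions, together with the part of $A$ captured by the first $n$ clauses $U_i\cup F_i$, is a $\bsigma 2$-approximation to $A$, and if the construction could not proceed then these would combine into an actual $\bsigma 2$ superset of $A$ missing $Y$. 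Pinning down this combinatorial accounting so that it simultaneously secures perfectness, the $T_D$-property, and the $\bpi 2$-complexity of $Z$ is where the real work lies; the topology of $X$ being quasi-Polish (hence sober and a Baire space, via \cite{debrecht2013}) may also be invoked if needed to guarantee that the nested neighborhoods have a point in their intersection, though if $Y$ is merely required to be non-quasi-Polish-witnessing we only need the points $x_n$ themselves, which live in $Y$ by construction.
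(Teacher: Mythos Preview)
Your plan takes a very different route from the paper's, and the ``main obstacle'' you flag is in fact the gap. The paper does not attempt to build a countable perfect $T_D$-subspace of $Y$ at all; instead it gives a short localization argument that directly exploits the Baire property of a single closed subspace of $Y$. Concretely: let $U$ be the union of all basic opens $B_i$ for which some $C_i\in\bsigma 2(X)$ separates $B_i\cap A$ from $Y$, and set $A'=A\setminus U\in\bpi 2(X)$. If the lemma failed then $Z:=Cl(A')\cap Y$ would be non-empty. Now $Z$ is closed in $Y$, hence a Baire space; writing $A'=\bigcap_i D_i$ with $D_i\in\bdelta 2(X)$ and noting $Z\cap A'=\emptyset$, we get $Z\subseteq\bigcup_i(X\setminus D_i)$, so by the preceding lemma some $X\setminus D_j$ contains a non-empty relatively open $B_i\cap Z$. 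Then $C'=B_i\cap Cl(A')\cap D_j$ is a $\bsigma 2$-set with $B_i\cap A'\subseteq C'$ and $C'\cap Y=\emptyset$, so $B_i\cap A$ is separated from $Y$, forcing $B_i\subseteq U$ and contradicting $B_i\cap A'\neq\emptyset$.

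The difficulty with your proposed construction is exactly the one you name: the non-separation hypothesis only tells you that $Y$ meets every $\bsigma 2$-set containing \emph{all of} $A$, which is a global statement about large sets, whereas a construction patterned on Theorem~\ref{thrm:completelyBaire_characterization} needs to place points of $Y$ inside prescribed \emph{small} open sets $B(x_{f(n)},n)\setminus Cl(\{x_{f(n)}\})$ to force perfectness. There is no evident way to deduce that such a small open set meets $Y$ from global inseparability alone; your suggested ``peel off $V$ and add it to a separator under construction'' move does not obviously terminate or cover $A$, since the open regions you would discard need not have any controlled relation to $A$. The paper's localization to $A'$ and $Z=Cl(A')\cap Y$ is precisely what converts global inseparability into the local density-type information needed, and once one has that, a direct one-line application of Baire category (via the preceding lemma) finishes the argument without any point-by-point construction.
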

\begin{proof}
Assume for a contradiction that there is no $\bsigma 2$-set separating $A$ from $Y$. Let $\{B_i\}_{i\in\omega}$ be a basis for $X$. Define
\[ U = \bigcup\{B_i\,|\, \exists C_i\in\bsigma 2(X): B_i\cap A\subseteq C_i \mbox{ \& } C_i \cap Y=\emptyset\}.\]

Then $U$ is open, and there is $C\in\bsigma 2(X)$ separating $A\cap U$ from $Y$ (just take $C$ to be the countable union of the $C_i$'s). Therefore, if we set $A'=A\setminus U$, we must have $Z=Cl(A')\cap Y\not=\emptyset$. Clearly $A'\in\bpi 2(X)$, so we can write $A' = \bigcap_{i\in\omega}D_i$ with $D_i\in\bdelta 2(X)$. Since $Z$ is a Baire space and $Z\subseteq \bigcup_{i\in\omega}X\setminus D_i$, there is $i,j\in\omega$ such that $B_i\cap Z\not=\emptyset$ and $B_i\cap Z\subseteq X\setminus D_j$. Clearly $B_i\cap A'\not=\emptyset$ because $B_i$ intersects the closure of $A'$.

Now let $C' = B_i\cap Cl(A') \cap D_j$. Then $C'\in\bsigma 2(X)$, $B_i\cap A'\subseteq C'$, and $Y\cap C' = \emptyset$. Thus $C\cup C'$ is a $\bsigma 2$-set separating $B_i\cap A$ from $Y$. But then we must have $B_i\subseteq U$, which contradicts $B_i\cap A'\not=\emptyset$.
\end{proof}

\begin{thm}\label{thrm:Pi3_completelyBaire}
If $X$ is quasi-Polish and $Y\in\bpi 3(X)$ is completely Baire, then $Y$ is quasi-Polish.
\end{thm}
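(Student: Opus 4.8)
The plan is to show that $Y$ is homeomorphic to a $\bpi 2$-subset of $X$, since $X$ is quasi-Polish and $\bpi 2$-subsets of quasi-Polish spaces are quasi-Polish. Since $Y\in\bpi 3(X)$, we can write $Y=\bigcap_{i\in\omega}(A_i\cup U_i)$ where each $A_i\in\bpi 2(X)$ and each $U_i$ is open (using that $\bsigma 2$-sets are countable unions of differences of lower-complexity sets, and rearranging; more precisely, $Y$ is $\bpi 3$ means $X\setminus Y=\bigcup_i B_i\setminus B_i'$ with $B_i,B_i'\in\bsigma 2(X)$, so $Y=\bigcap_i (B_i'\cup (X\setminus B_i))$, and $B_i'\cup(X\setminus B_i)$ has the form ``$\bsigma 2$ union $\bpi 2$''; one can absorb the open part of the $\bsigma 2$ piece and reduce to the stated form). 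The key idea is that the previous lemma lets us ``thicken'' the bad $\bpi 2$-pieces into $\bsigma 2$-pieces.

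**Key steps.**

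First I would reduce the representation of $Y$ to a countable intersection $Y=\bigcap_{i\in\omega} C_i$ where each $C_i\in\bpi 2(X)$ and $Y$ is dense in each $C_i$ relative to the part that matters — more carefully, I want to replace each $\bpi 2$-factor $A_i$ by a $\bsigma 2$-set. Fix $i$, and consider $A_i'=A_i\setminus U_i\in\bpi 2(X)$; then $A_i'\cap Y=\emptyset$ since $Y\subseteq A_i\cup U_i$ forces $Y\cap(A_i\setminus U_i)=\emptyset$... wait, that is not quite right: $y\in Y$ with $y\in A_i\setminus U_i$ would need $y\in A_i\cup U_i$, which it satisfies, so this does not immediately give disjointness. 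Instead I would argue: $Y\in\bpi 2(Y)$ trivially and $Y$ is completely Baire, so apply the preceding lemma with the ambient quasi-Polish space $X$ and the completely Baire subspace $Y$: for the $\bpi 2$-set $A_i\setminus Y$... but $A_i\setminus Y$ need not be $\bpi 2$. The correct move is: for each $i$, the set $A_i\setminus U_i$ is $\bpi 2$ in $X$, and I claim one can choose the representation so that $(A_i\setminus U_i)\cap Y=\emptyset$; indeed replace $U_i$ by $U_i\cup \mathrm{Int}(A_i)$ and note $Y\setminus U_i\subseteq A_i$, then... Let me instead invoke the second lemma directly: taking $A := A_i\setminus U_i \in\bpi 2(X)$ (after adjusting so this is disjoint from $Y$), there is $C_i\in\bsigma 2(X)$ with $A_i\setminus U_i\subseteq C_i$ and $C_i\cap Y=\emptyset$. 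Then $A_i\cup U_i \supseteq (X\setminus C_i)\cup U_i \supseteq Y$ (check: if $y\in Y$ then $y\notin C_i$, so $y\in X\setminus C_i$), and $(X\setminus C_i)\cup U_i$ is a $\bpi 2$-set (complement of $\bsigma 2$, union open, which is $\bpi 2$). Intersecting over $i$ gives $Y\subseteq \bigcap_i((X\setminus C_i)\cup U_i)\subseteq\bigcap_i(A_i\cup U_i)\cup(\text{stuff})$... I need the reverse inclusion too: since $X\setminus C_i\subseteq X\setminus(A_i\setminus U_i)=(X\setminus A_i)\cup U_i$, we get $(X\setminus C_i)\cup U_i\subseteq (X\setminus A_i)\cup U_i\subseteq$ — no. The clean statement: $\bigcap_i((X\setminus C_i)\cup U_i)$ contains $Y$ and is contained in $\bigcap_i(A_i\cup U_i)=Y$? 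Only if $X\setminus C_i\subseteq A_i$, equivalently $X\setminus A_i\subseteq C_i$, which is false in general. So this naive approach fails and the real content is subtler.

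**The main obstacle and the fix.**

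The genuine difficulty — and where I expect the work to be — is that replacing $\bpi 2$ by $\bsigma 2$ changes the set, so one cannot simply substitute factor-by-factor; one must show the \emph{whole} intersection collapses to $\bpi 2$. The right approach is a Hurewicz-style unraveling: use the two preceding lemmas to run an induction along the levels of the $\bpi 3$-representation, producing at stage $i$ a $\bsigma 2$-set separating an accumulated $\bpi 2$-piece from $Y$, and show the union of all these separators, together with complements of finitely many factors, exactly carves out $Y$ as a $\bpi 2$-set. Concretely: write $X\setminus Y=\bigcup_{n}(P_n\setminus Q_n)$ with $P_n,Q_n\in\bsigma 2(X)$, $Q_n\subseteq P_n$; for each $n$, $P_n\setminus Q_n$ is disjoint from $Y$ but is not $\bpi 2$, so instead consider, by the second lemma applied inside the quasi-Polish space $P_n$ (itself quasi-Polish as a $\bpi 2$-subset — wait, $P_n$ is $\bsigma 2$, not $\bpi 2$; use that $\bsigma 2$-subsets of quasi-Polish spaces are again quasi-Polish by \cite{debrecht2013}), that $Y\cap P_n$ is completely Baire in $P_n$ and the $\bpi 2$-set $(P_n\setminus Q_n)$... one shows $P_n\setminus Y$ is itself a countable intersection issue. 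The honest plan: induct to show every $\bpi 3$ completely Baire $Y$ is already $\bpi 2$, reducing $\bpi 3$ to $\bpi 2$ one ``differenced layer'' at a time, with the second lemma supplying at each layer a $\bsigma 2$ set that absorbs the $\bpi 2$ obstruction without meeting $Y$, and the first lemma (the Baire-category generalization) guaranteeing the induction does not stall because a completely Baire $Y$ cannot be covered by countably many $\bsigma 2$-sets each nowhere dense in the relevant closed piece. Assembling these separators and taking complements yields a $\bpi 2$-presentation of $Y$ inside $X$; invoking that $\bpi 2$-subsets of quasi-Polish spaces are quasi-Polish finishes the proof. The bookkeeping of this induction — tracking which layers have been linearized and ensuring the resulting countable intersection is exactly $Y$ — is the crux.
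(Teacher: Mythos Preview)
Your proposal does not reach a proof; the difficulty you encounter is self-inflicted by working on the wrong side of the complement. You try to write $Y=\bigcap_i(A_i\cup U_i)$ and then replace each factor, and you correctly notice that enlarging a factor (via the separation lemma) does not preserve the intersection. The ``Hurewicz-style unraveling'' you sketch at the end is vague and, more importantly, unnecessary: no induction or bookkeeping is needed at all.

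The paper's argument is a one-liner once you switch to the complement. Since $Y\in\bpi 3(X)$, one has $X\setminus Y\in\bsigma 3(X)$, and every $\bsigma 3$-set is a countable union of $\bpi 2$-sets (write $X\setminus Y=\bigcup_i B_i\setminus B_i'$ with $B_i,B_i'\in\bsigma 2$, expand each $B_i$ as a countable union of locally closed sets $D_{i,j}$, and note $D_{i,j}\cap(X\setminus B_i')\in\bpi 2$). So $X\setminus Y=\bigcup_i A_i$ with each $A_i\in\bpi 2(X)$, and each $A_i$ is \emph{automatically} disjoint from $Y$. Now the separation lemma applies directly: for each $i$ there is $C_i\in\bsigma 2(X)$ with $A_i\subseteq C_i$ and $C_i\cap Y=\emptyset$. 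Then $\bigcup_i A_i\subseteq\bigcup_i C_i\subseteq X\setminus Y$, so $X\setminus Y=\bigcup_i C_i\in\bsigma 2(X)$, hence $Y\in\bpi 2(X)$ and is quasi-Polish. The point is that on the union side, enlarging each piece (while staying inside $X\setminus Y$) preserves the union trivially; your attempt on the intersection side is exactly dual and cannot work for the same reason.
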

\begin{proof}
We can write $X\setminus Y = \bigcup_{i\in\omega} A_i$ with $A_i\in\bpi 2(X)$. From the previous lemma there is $C_i\in\bsigma 2(X)$ separating $A_i$ from $Y$. Therefore, $X\setminus Y = \bigcup_{i\in\omega}C_i$ is $\bsigma 2$, hence $Y\in\bpi2 (X)$. Every $\bpi 2$-subspace of a quasi-Polish space is quasi-Polish (Theorem 23 of \cite{debrecht2013}), hence $Y$ is quasi-Polish.
\end{proof}

We now move on to the next goal of this section, which is to characterize which \emph{countable} spaces are $\bpi 3$-spaces (Theorem \ref{thrm:CharacterizationOfCountableDelta3} below). Since every countable subset of a countably based $T_0$-space is $\bsigma 3$, this amounts to determining when a countable subset of a quasi-Polish space is a $\bdelta 3$-set. In particular, we will see that every countably based countable $T_D$-space is a $\bpi 3$-space.

\begin{lem}\label{lem:ConditionForCountableBeingDelta3}
Assume $Y$ is a countably based $T_0$-space and $X\subseteq Y$ is countable. If for every non-empty $A\in \bpi 2(X)$ there is a finite non-empty $F\in\bdelta 2(A)$, then $X\in\bdelta 3(Y)$.
\end{lem}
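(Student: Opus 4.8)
The plan is to run a transfinite Cantor--Bendixson-type derivative on $X$ in which, at each stage, the hypothesis lets me peel off a finite $\bdelta 2$-piece; the sets removed will then assemble into a $\bsigma 3(Y)$-expression for $Y\setminus X$. Throughout I would use the routine facts that for a subspace $X\subseteq Y$ one has $\bpi 2(X)=\{P\cap X\mid P\in\bpi 2(Y)\}$, that relatively open (resp.\ closed) subsets of a subspace are traces of open (resp.\ closed) subsets of $Y$, that $\bpi 2$ is closed under countable intersections, that locally closed sets lie in $\bdelta 2$, and that a difference of two $\bsigma 2$-sets lies in $\bsigma 3$.

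For the derivative, set $X_0=X$. Given $X_\alpha\in\bpi 2(X)$ with $X_\alpha\neq\emptyset$, the hypothesis provides a finite nonempty $F_\alpha\in\bdelta 2(X_\alpha)$; writing $F_\alpha$ as a finite union of sets locally closed in $X_\alpha$ and replacing each such set $V\cap D$ ($V$ relatively open, $D$ relatively closed) by a locally closed set $\widetilde V\cap\widetilde D\subseteq Y$ with $(\widetilde V\cap\widetilde D)\cap X_\alpha=V\cap D$, one obtains $G^{(\alpha)}\in\bdelta 2(Y)$ with $G^{(\alpha)}\cap X_\alpha=F_\alpha$. Put $X_{\alpha+1}=X_\alpha\setminus F_\alpha=X_\alpha\setminus G^{(\alpha)}$ and $X_\lambda=\bigcap_{\alpha<\lambda}X_\alpha$ at limit stages. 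Since each $G^{(\alpha)}$ is in $\bsigma 2(Y)$ and $\bpi 2$ is closed under countable intersections, an induction shows $X_\alpha=X\setminus\bigcup_{\beta<\alpha}G^{(\beta)}$ and $X_\alpha\in\bpi 2(X)$ for every $\alpha$. As $F_\alpha\neq\emptyset$ whenever $X_\alpha\neq\emptyset$, the chain $(X_\alpha)_\alpha$ strictly decreases while nonempty, and a strictly decreasing transfinite chain of subsets of the countable set $X$ has countable length; hence $X_{\alpha^*}=\emptyset$ for some countable ordinal $\alpha^*$, that is, $X\subseteq G:=\bigcup_{\beta<\alpha^*}G^{(\beta)}$, and $G\in\bsigma 2(Y)$ as a countable union of $\bdelta 2(Y)$-sets.

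It then remains to control $G\setminus X$. Let $\widehat G^{(\alpha)}=G^{(\alpha)}\setminus F_\alpha$, which is in $\bsigma 2(Y)$; since $G^{(\alpha)}\cap X_\alpha=F_\alpha$ the set $\widehat G^{(\alpha)}$ is disjoint from $X_\alpha$, so $\widehat G^{(\alpha)}\cap X\subseteq X\setminus X_\alpha=X\cap\bigcup_{\beta<\alpha}G^{(\beta)}$. A short membership argument --- given $y\in G\setminus X$, consider the least $\alpha$ with $y\in G^{(\alpha)}$ --- then gives
\[ G\setminus X=\bigcup_{\alpha<\alpha^*}\Bigl(\widehat G^{(\alpha)}\setminus\bigcup_{\beta<\alpha}G^{(\beta)}\Bigr), \]
a countable union of differences of two $\bsigma 2(Y)$-sets, hence a member of $\bsigma 3(Y)$. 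Consequently $Y\setminus X=(Y\setminus G)\cup(G\setminus X)$ with $Y\setminus G\in\bpi 2(Y)\subseteq\bsigma 3(Y)$, so $Y\setminus X\in\bsigma 3(Y)$, i.e.\ $X\in\bpi 3(Y)$. Since $X$ is countable it is also a $\bsigma 3(Y)$-set, and therefore $X\in\bdelta 3(Y)$.

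The step I expect to be the main obstacle is the transfer between relative and ambient complexity: one must be sure that the finite piece $F_\alpha$, which lives in the subspace $X_\alpha$ carrying its own topology, is realized as $G^{(\alpha)}\cap X_\alpha$ for a genuinely locally closed (hence $\bdelta 2$) subset $G^{(\alpha)}$ of $Y$, and that the bookkeeping identities $X_\alpha=X\setminus\bigcup_{\beta<\alpha}G^{(\beta)}$ together with the disjointness of $\widehat G^{(\alpha)}$ from $X_\alpha$ are correctly maintained through the limit stages. Everything else reduces to closure properties of the classes $\bsigma 2$, $\bpi 2$, and $\bsigma 3$ and to the fact that the derivative terminates after countably many steps.
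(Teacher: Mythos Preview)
Your argument is correct and genuinely different from the paper's. Both proofs run a transfinite Cantor--Bendixson-type process, but the paper's derivative removes at each stage \emph{all} points whose singleton is locally closed, and the hypothesis is invoked only to show that this process terminates at the empty set. The paper then builds, for each point $x_i\in X$, a locally closed set $A_i\ni x_i$ and a decreasing sequence of opens $V^i_j$ in $Y$, and exhibits $X$ directly as the $\bpi 3$-set $\bigcap_j\bigcup_i (A_i\cap V^i_j)$; the verification that this intersection equals $X$ uses a minimal-ordinal argument with respect to the Cantor--Bendixson rank. Your approach instead applies the hypothesis \emph{at every stage} to peel off a finite $\bdelta 2$-piece $F_\alpha$, lifts it to $G^{(\alpha)}\in\bdelta 2(Y)$, and then computes the complexity of $Y\setminus X$ by the decomposition $(Y\setminus G)\cup\bigcup_\alpha(\widehat G^{(\alpha)}\setminus\bigcup_{\beta<\alpha}G^{(\beta)})$. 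Your route is arguably more elementary: it avoids the somewhat delicate point-by-point construction and the ordinal-minimality verification, reducing everything to closure properties of $\bsigma 2$ and $\bsigma 3$. The paper's route, on the other hand, exhibits $X$ explicitly as a countable intersection of $\bsigma 2$-sets and yields a canonical rank function on $X$. The one place where you should be explicit is the claim that $F_\alpha$ is a \emph{finite} union of locally closed sets: this holds because $F_\alpha\in\bsigma 2(X_\alpha)$ is a countable union of locally closed subsets of the finite set $F_\alpha$, hence only finitely many are distinct.
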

\begin{proof}
For each ordinal $\alpha$, we inductively define $X^{\alpha}$ as follows:
\begin{itemize}
\item
$X^0 = X$,
\item
$X^{\alpha+1} = X^{\alpha}\setminus \{x\in X^{\alpha}\,|\, \{x\} \mbox{ is locally closed in } X^{\alpha}\}$,
\item
$X^{\alpha} = \bigcap_{\beta<\alpha} X^{\beta}$ when $\alpha$ is a limit ordinal.
\end{itemize}

Since $X$ is countable there is some ordinal $\alpha<\omega_1$ such that $X^{\alpha}=X^{\alpha+1}$. We define $\ell(X)$ to be the least such ordinal. Using again the fact that $X$ is countable, it is straight forward to show that $X^{\alpha}\in\bpi 2(X)$ for each $\alpha<\ell(X)$. Thus our assumption on $X$ implies that if $X^{\alpha}$ is not empty, then there is a finite non-empty $F\in \bdelta 2(X^{\alpha})$. It follows that $\{x\}$ is locally closed in $X^{\alpha}$ for each $x\in F$, hence $X^{\alpha}\not=X^{\alpha+1}$. Therefore, $X^{\ell(X)}=\emptyset$.

The lemma is trivial if $X$ is finite, so fix an infinite enumeration $x_0,x_1,\ldots$ of $X$ without repetitions. Since $X^{\ell(X)}=\emptyset$, for each $i\in \omega$ there is a countable ordinal $\alpha_i<\ell(X)$ such that $x_i\in X^{\alpha_i}\setminus X^{\alpha_i+1}$. Choose an open subset $U_i$ of $Y$ such that $Cl(\{x_i\})\cap U_i\cap X^{\alpha_i} = \{x_i\}$ (here and in the following, $Cl(\cdot)$ is the closure operator for $Y$).

For each $i\in\omega$, define $A_i = Cl(\{x_i\})\cap U_i$. Then $A_i\in\bdelta 2(Y)$ and $x_i \in A_i$.

Next, for each $i\in\omega$, let $\{V^i_j\}_{j\in\omega}$ be a decreasing sequence of open subsets of $Y$ such that $\{x_i\} = Cl(\{x_i\})\cap \bigcap_{j\in\omega}V^i_j$, and $x_k\not\in V^i_j$ whenever $k\leq j$ and $x_i\not\in Cl(\{x_k\})$.

Define $W_j = \bigcup_{i\in\omega} A_i\cap V^i_j$. Then $W=\bigcap_{j\in\omega} W_j$ is in $\bpi 3(Y)$, and $X\subseteq W$ is clear from the construction.

Next, let $y\in W$ be fixed. The set of ordinals $\{\alpha_i\,|\, y\in A_i\}$ is non-empty, so let $\alpha$ be its minimal element. Then there is some $k\in\omega$ satisfying $\alpha_k=\alpha$ and $y\in A_k$ (it actually turns out that $k$ is uniquely determined). 

Assume for a contradiction that there is $j\geq k$ and $i\not=k$ such that $y\in A_i\cap V^i_j$. Then $x_k\in V^i_j$ because $V^i_j$ is an open set containing $y$ and $y\in Cl(\{x_k\})$. Thus, $k \leq j$ together with our definition of $V^i_j$ implies $x_i\in Cl(\{x_k\})$. We also have $x_i\in U_k$ because $y\in U_k$ and $y\in Cl(\{x_i\})$. Since $Cl(\{x_k\})\cap U_k \cap X^{\alpha_k}=\{x_k\}$, we must have $x_i\not\in X^{\alpha_k}$. But then $y\in A_i$ and $\alpha_i < \alpha_k$, contradicting our choice of $\alpha$.

Since $y\in \bigcap_{j\in\omega} W_j$, the above argument implies that $y\in A_k\cap V^k_j$ for all $j\geq k$. It follows that $y\in Cl(\{x_k\})\cap V^k_j$ for all $j\in \omega$ because $A_k\subseteq Cl(\{x_k\})$ and because the sequence $\{V^k_j\}_{j\in\omega}$ is decreasing. Our choice of $V^k_j$ implies $y=x_k$, and since $y\in W$ was arbitrary, we obtain $W\subseteq X$.

Therefore, $X=W\in \bpi 3(Y)$. As every countable subset of a countably based space is a $\bsigma 3$-set, it follows that $X\in\bdelta 3(Y)$.
\end{proof}

The use of transfinite ordinals in the above proof  might seem excessive. However, the following example suggests that it is not avoidable.

Let $\omega^{<n}$ be the set of sequences of natural numbers of length less than $n$. Give $\omega^{<n}$ the topology generated by subbasic open sets of the form $B_{\sigma} = \omega^{<n} \setminus\{\sigma'\in \omega^{<n} \,|\, \sigma \preceq \sigma'\}$, where $\sigma$ varies over elements of $\omega^{<n}$ and $\preceq$ is the prefix relation. The specialization order on $\omega^{<n}$ is simply $\succeq$. Then $\{\sigma\}$ is locally closed in $\omega^{<n}$ if and only if the length of $\sigma$ equals $n-1$. Therefore, $\ell(\omega^{<n})=n$. If we take $X$ to be the disjoint union of the sequence of spaces $\{\omega^{<n}\}_{n\in\omega}$, then $\ell(X)=\omega$.

Every finite subset of a $T_D$-space is a $\bdelta 2$-set, so we immediately obtain the following corollary of Lemma \ref{lem:ConditionForCountableBeingDelta3}.

\begin{cor}\label{cor:countableTD_delta3}
If $Y$ is a countably based $T_0$-space and $X\subseteq Y$ is a countable $T_D$-space, then $X\in \bdelta 3(Y)$.
\qed
\end{cor}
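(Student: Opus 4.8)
The plan is to deduce the corollary directly from Lemma~\ref{lem:ConditionForCountableBeingDelta3}. That lemma already reduces the statement $X\in\bdelta 3(Y)$ to verifying a single structural condition on $X$, namely that every non-empty $A\in\bpi 2(X)$ contains a finite non-empty subset lying in $\bdelta 2(A)$. So the entire argument amounts to checking that this hypothesis is satisfied whenever $X$ is a countable $T_D$-space, and then invoking the lemma.

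First I would record that the $T_D$-property is inherited by arbitrary subspaces. If $\{x\}$ is locally closed in $X$, say $\{x\}=U\cap C$ with $U$ open and $C$ closed in $X$, then for any $A\subseteq X$ with $x\in A$ we have $\{x\}=(U\cap A)\cap(C\cap A)$, which exhibits $\{x\}$ as locally closed in $A$. In particular, every non-empty $A\in\bpi 2(X)$ is itself a $T_D$-space. Now pick any point $x\in A$: since $A$ is a $T_D$-space, $\{x\}$ is locally closed in $A$, and a locally closed set is in $\bdelta 2$ (being simultaneously the difference of two open sets and the union of two closed sets, it lies in $\bsigma 2(A)\cap\bpi 2(A)$). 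Hence $F=\{x\}$ is a finite non-empty element of $\bdelta 2(A)$, which is exactly the hypothesis required by Lemma~\ref{lem:ConditionForCountableBeingDelta3}. Applying that lemma yields $X\in\bdelta 3(Y)$.

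There is no substantial obstacle here: the only ingredients are the two elementary observations just made — that subspaces of $T_D$-spaces are $T_D$, and that local closedness implies membership in $\bdelta 2$ — both of which follow immediately from the definitions. The real content was already carried by Lemma~\ref{lem:ConditionForCountableBeingDelta3}; the corollary is simply its specialization to the case where the finite $\bdelta 2$-witnesses may always be taken to be singletons, which is precisely what the $T_D$-axiom guarantees.
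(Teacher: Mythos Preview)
Your proposal is correct and follows essentially the same route as the paper: both simply observe that in a $T_D$-space (and hence in any of its subspaces) every singleton is locally closed, hence $\bdelta 2$, which is exactly the hypothesis needed to invoke Lemma~\ref{lem:ConditionForCountableBeingDelta3}. One minor slip: your parenthetical justification that a locally closed set is ``the union of two closed sets'' is not what you want---rather, its \emph{complement} $(X\setminus U)\cup(X\setminus C)$ is the union of a closed set and an open set, each of which is $\bsigma 2$, giving $\bpi 2$ for the locally closed set itself.
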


If $Y$ is quasi-Polish, then the converse of Lemma \ref{lem:ConditionForCountableBeingDelta3} holds as well, providing a complete characterization of countable $\bpi 3$-spaces.

\begin{thm}\label{thrm:CharacterizationOfCountableDelta3}
Assume $Y$ is quasi-Polish and $X\subseteq Y$ is countable. Then the following are equivalent:
\begin{enumerate}
\item
$X\in \bdelta 3(Y)$,
\item
Every non-empty $A\in \bpi 2(X)$ contains a finite non-empty $F\in\bdelta 2(A)$,
\item
For every non-empty $A\subseteq X$ there is $x\in A$ such that $\{x \}$ is locally closed in $A$.
 \end{enumerate}
\end{thm}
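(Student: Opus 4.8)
The plan is to prove the four implications $(3)\Rightarrow(2)$, $(2)\Rightarrow(1)$, $(1)\Rightarrow(2)$, and $(2)\Rightarrow(3)$, which together give all the equivalences. The first two are free: for $(3)\Rightarrow(2)$, given non-empty $A\in\bpi 2(X)$ choose by $(3)$ a point $x\in A$ with $\{x\}$ locally closed in $A$, so $F=\{x\}$ is a finite non-empty member of $\bdelta 2(A)$; and $(2)\Rightarrow(1)$ is exactly Lemma~\ref{lem:ConditionForCountableBeingDelta3}.

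For $(2)\Rightarrow(3)$ I would recycle the transfinite derivative $X^\alpha$ from the proof of Lemma~\ref{lem:ConditionForCountableBeingDelta3}. The first observation is that, for any countably based $T_0$-space $A$, every finite non-empty $F\in\bdelta 2(A)$ contains a point locally closed in $A$: being a finite $T_0$-space, $F$ has some singleton $\{x\}$ open in $F$, hence $\{x\}=V\cap F$ with $V$ open in $A$, and writing $F$ as a countable union of sets locally closed in $A$ and intersecting each with $V$ exhibits $\{x\}$ itself as locally closed in $A$. Thus $(2)$ is equivalent to: every non-empty $A\in\bpi 2(X)$ has a point locally closed in $A$. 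Now suppose $(3)$ fails, witnessed by a non-empty $A\subseteq X$ none of whose points is locally closed in $A$. An induction on $\alpha$ — using that $A\subseteq X^\alpha$ forces no point of $A$ to be locally closed in $X^\alpha$ (the subspace topology factors through $X^\alpha$), so no point of $A$ is removed in passing to $X^{\alpha+1}$ — gives $A\subseteq X^\alpha$ for all $\alpha$, hence $X^{\ell(X)}\ne\emptyset$. As in the proof of Lemma~\ref{lem:ConditionForCountableBeingDelta3} one checks $X^{\ell(X)}\in\bpi 2(X)$, and since $X^{\ell(X)}=X^{\ell(X)+1}$ no point of $X^{\ell(X)}$ is locally closed in $X^{\ell(X)}$; this contradicts the reformulation of $(2)$.

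The crux is $(1)\Rightarrow(2)$. Fix non-empty $A\in\bpi 2(X)$; being a subset of $X$ it is countable. First I would note that $A$ is a $\bpi 3$-space: rewriting a $\bpi 2$-presentation of $A$ inside $X$ with open and closed sets of $Y$, and using that every ``closed $\cup$ open'' subset of $Y$ lies in $\bpi 2(Y)$, gives $A=X\cap P$ with $P\in\bpi 2(Y)$, and then $A=X\cap P\in\bpi 3(Y)$ since $X\in\bdelta 3(Y)\subseteq\bpi 3(Y)$, $P\in\bpi 2(Y)\subseteq\bpi 3(Y)$, and $\bpi 3(Y)$ is closed under finite intersection. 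Next I would invoke the theorem of \cite{debrecht2013} (Section 6.1) that a countably based $T_0$-space is a $\bpi 3$-space precisely when its topology is induced by a \emph{bicomplete} quasi-metric. Let $d$ be such a quasi-metric on $A$; then $\hat d(x,y)=\max\{d(x,y),d(y,x)\}$ is a metric, complete by bicompleteness, whose metric topology refines the topology of $A$. As $A$ is countable, $(A,\hat d)$ is a non-empty countable complete metric space, hence by Baire category has an isolated point $a$, so $\{y\in A:d(a,y)<\varepsilon\}\cap\{y\in A:d(y,a)<\varepsilon\}=\{a\}$ for some $\varepsilon>0$. But the forward ball $V=\{y:d(a,y)<\varepsilon\}$ is open in the topology of $A$, while $Cl_A(\{a\})=\{y:d(y,a)=0\}\subseteq\{y:d(y,a)<\varepsilon\}$, so $V\cap Cl_A(\{a\})=\{a\}$; thus $\{a\}$ is locally closed in $A$ and $F=\{a\}\in\bdelta 2(A)$ is the required set.

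I expect essentially all the difficulty to sit in $(1)\Rightarrow(2)$, and in picking the right mechanism. A naive Hurewicz-style attempt — to trap a ``generic'' point of a $\bpi 3$-presentation of $A$ that ought to lie in $A$ yet is arranged to miss every point of $A$ — is doomed, because any point landing in such a presentation genuinely lies in $A$, and when $A$ has no locally closed point its specialization order has no minimal elements to exploit. Passing instead to the symmetrization metric supplied by bicompleteness avoids this entirely: an isolated point of the finer symmetric topology is, by the triangle inequality, automatically a locally closed point of the original topology. Everything else is routine bookkeeping about the closure and subspace behaviour of $\bpi 2$ and $\bpi 3$ and about the transfinite derivative at stage $\ell(X)$, all in the style already used in the paper.
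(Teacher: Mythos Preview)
Your proof is correct, and the central mechanism --- passing from a $\bpi 3$-space to a bicomplete quasi-metric, symmetrizing to a complete (hence Polish) metric, and extracting an isolated point which is then locally closed for the original topology --- is exactly what the paper uses. The difference is in how the implications are routed. The paper proves the cycle $(3)\Rightarrow(2)\Rightarrow(1)\Rightarrow(3)$: it puts the bicomplete quasi-metric on $X$ itself once, observes that the countable Polish space $(X,\widehat d)$ is scattered, and therefore gets an isolated point in \emph{every} subset $A\subseteq X$ simultaneously, yielding $(3)$ directly. You instead prove $(1)\Rightarrow(2)$ by putting a separate bicomplete quasi-metric on each $A\in\bpi 2(X)$ (after first arguing $A\in\bpi 3(Y)$), and then close the loop with an independent $(2)\Rightarrow(3)$ via the transfinite derivative $X^{\alpha}$.

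What this buys: the paper's route is shorter and avoids both the bookkeeping that $A\in\bpi 3(Y)$ and the entire $(2)\Rightarrow(3)$ argument. On the other hand, your detour produces something the paper does not: a direct, purely internal proof that $(2)\Rightarrow(3)$ holds for \emph{any} countable countably based $T_0$-space $X$, without ever invoking quasi-metrics or an ambient quasi-Polish $Y$. Your preliminary observation that a finite non-empty $\bdelta 2$-subset of a $T_0$-space always contains a locally closed singleton is also a clean standalone fact. So the paper's argument is more economical for the theorem as stated, while yours separates out a combinatorial equivalence $(2)\Leftrightarrow(3)$ that stands on its own.
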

\begin{proof}
The implication $(3 \Rightarrow 2)$ is trivial and the implication $(2 \Rightarrow 1)$ follows from Lemma~\ref{lem:ConditionForCountableBeingDelta3}.

We only provide a sketch of the proof for the implication $(1 \Rightarrow 3)$, and the reader should consult Section 6.1 of \cite{debrecht2013} for background on the properties of bicomplete quasi-metrics that we use here.

If $X\in \bdelta 3(Y)$, then by Theorem 32 of \cite{debrecht2013} there is a quasi-metric $d$ compatible with the topology on $X$ such that the induced metric space $(X,\widehat{d})$ is Polish. Since $(X,\widehat{d})$ is a countable Polish space it is scattered, which means it does not contain a non-empty perfect subspace (see \cite{kechris} or Section 12 of \cite{debrecht2013}). This implies that for any non-empty $A \subseteq X$ there is $x\in A$ such that $\{x\}$ is open in $(A,\widehat{d})$. By Theorem 14 of \cite{debrecht2013}, every open subset of $(A,\widehat{d})$ is a $\bsigma 2$-subset of $(A,d)$, hence $\{x\}$ is a $\bsigma 2$-subset of $(A,d)$. Therefore, $\{x\} = U\setminus V$ for some pair of sets $U,V\subseteq A$ that are open in $(A,d)$, hence $\{x\}$ is locally closed within the subspace $A$ of $X$.
\end{proof}

\section{Countable spaces}

So far we have not had much to say about the space $S_0$, but it plays the central role in this section where we can finally characterize all of the countably based countable $T_0$-spaces that are not quasi-Polish. Note that the specialization order on $S_0$ is the inverse of the prefix order on $\omega^{<\omega}$.

In this section, we will prove that a countably based completely Baire $T_0$-space with countably many points is either quasi-Polish or else contains $S_0$ as a $\bpi 2$-subspace. Combined with our previous results (Theorems \ref{thrm:completelyBaire_characterization} and \ref{thrm:Pi3_completelyBaire}), this implies that a countably based countable $T_0$-space is either quasi-Polish or else it contains a $\bpi 2$-subspace homeomorphic to $S_2$, $S_1$, $S_D$, or $S_0$.

As stated in the introduction, a subbasis for the closed subsets of $S_0$ is given by sets of the form $\uparrow\!p = \{q\in\omega^{<\omega}\,|\, p\preceq q\}$ for $p\in\omega^{<\omega}$. By taking finite unions we obtain a basis for the closed subsets of $S_0$. In particular, if $F\subseteq S_0$ is finite, then its closure $Cl(F)$ in $S_0$ equals the finite union $\bigcup_{p\in F} \uparrow\! p$. Thus every closed subset of $S_0$ is equal to an intersection of the form $\bigcap_{i\in I} Cl(F_i)$, where $\{F_i\}_{i\in I}$ is a collection of finite subsets of $S_0$. It follows that if $A\subseteq S_0$ is closed and $x\not\in A$, then there is finite $F\subseteq S_0$ such that $A \subseteq Cl(F)$ and $x\not\in Cl(F)$. Because of this fact, proofs concerning the topology of $S_0$ tend to focus on closures of finite subsets, but it is important to note that not all closed subsets of $S_0$ are of this form. In fact, the topology of $S_0$ is uncountable, which the author only became aware of after discussions with Victor Selivanov concerning the space $S_0$.

\begin{prop}\label{prop:S0_topology_uncountable}
The topology on $S_0$ is uncountable.
\end{prop}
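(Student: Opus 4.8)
The plan is to inject the power set $\mathcal{P}(\omega)$ into the collection of closed subsets of $S_0$; since closed and open sets correspond bijectively under complementation, this already shows that the topology of $S_0$ has size continuum, hence is uncountable. Writing $0^n1$ for the length-$(n{+}1)$ sequence consisting of $n$ zeros followed by a single $1$, I would, for each $A\subseteq\omega$, set
\[ C_A \;=\; \bigcup_{n\in A}\uparrow\!(0^n1). \]
Injectivity of $A\mapsto C_A$ is immediate: since $0^m1\preceq 0^n1$ holds only for $m=n$ (a comparison of lengths and first differing coordinate), we get $0^n1\in C_A$ if and only if $n\in A$, so distinct $A$ yield distinct $C_A$.

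The real work is showing that each $C_A$ is closed. I would use the following mild sharpening of the observation made just before the proposition: if for every $x\notin C_A$ there is a finite $F\subseteq\omega^{<\omega}$ with $C_A\subseteq Cl(F)$ and $x\notin Cl(F)$, then $C_A$ is closed, because then $C_A=\bigcap_{x\notin C_A}Cl(F_x)$ exhibits $C_A$ as an intersection of sets of the form $\bigcup_{p\in F}\uparrow\!p$, which are finite unions of subbasic closed sets and hence closed. So fix $x\notin C_A$ and put $k=|x|$; the right choice of cover is
\[ F \;=\; \{0^{\,k+1}\}\cup\{\,0^n1 \mid n\in A \text{ and } n\le k\,\}, \]
which is finite. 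Then $x\notin Cl(F)$: the sequence $0^{k+1}$ is longer than $x$ and so cannot be a prefix of it, and no $0^n1$ with $n\in A$ is a prefix of $x$ either, since that would put $x$ into $\uparrow\!(0^n1)\subseteq C_A$. And $C_A\subseteq Cl(F)$: every $q\in C_A$ has the form $q=0^n1w$ with $n\in A$, and $q$ extends $0^n1\in F$ when $n\le k$ while $q$ extends $0^{k+1}\in F$ when $n>k$.

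The only step I expect to require any thought is the choice of $F$: the point is that a single ``sufficiently long'' all-zeros sequence $0^{k+1}$ simultaneously dominates the infinitely many tails $\uparrow\!(0^n1)$ with $n>k$, while the finitely many remaining branches are covered individually, and the length threshold $k=|x|$ is exactly what guarantees that $0^{k+1}$ — and each of the short branches $0^n1$ — stays off the prefixes of $x$. Given this, all that remains are routine prefix comparisons, and the conclusion is that $\{C_A\mid A\subseteq\omega\}$ is a family of continuum many distinct closed subsets of $S_0$, so the topology of $S_0$ is uncountable.
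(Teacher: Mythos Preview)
Your proof is correct and follows the same overall strategy as the paper: inject a set of size continuum into the closed subsets of $S_0$ by branching off the all-zeros spine. The paper parameterizes by $p\in\omega^{\omega}$, taking $A_p=\bigcup_{n\in\omega}\uparrow\!(0^n\diamond(p(n)+1))$, and verifies closedness by exhibiting $A_p$ explicitly as the intersection $\bigcap_{n\in\omega} Cl(F^p_n)$ of closures of specific finite sets; you instead parameterize by $A\subseteq\omega$, take $C_A=\bigcup_{n\in A}\uparrow\!(0^n1)$, and verify closedness via the separation criterion (finding, for each $x\notin C_A$, a basic closed set $Cl(F)$ containing $C_A$ but missing $x$). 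Your encoding is essentially a special case of the paper's (branching only with the value $1$, but at a selected set of levels rather than all of them), and your closedness argument is slightly more transparent; on the other hand, the paper's version records the extra observation that none of the $A_p$ has the form $Cl(F)$ for a finite $F$, which illustrates the remark preceding the proposition that not every closed subset of $S_0$ is basic.
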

\begin{proof}
We construct an injection that maps each $p\in\baire$ to a closed subset $A_p$ of $S_0$. Given $p\in\baire$, define $F^p_n = \{ 0^m \diamond (p(m)+1) \,|\, m\leq n\}\cup\{ 0^{n+1}\}$ for each $n\in\omega$, where $0^m$ denotes the string of length $m$ consisting only of zeros (this is the empty string $\varepsilon$ for $m=0$). Thus 
\begin{eqnarray*}
F^p_0 &=& \{ (p(0)+1), \, 0\},\\
F^p_1 &=& \{ (p(0)+1), \, 0\diamond(p(1)+1), \, 00\},\\
F^p_2 &=& \{ (p(0)+1), \, 0\diamond(p(1)+1), \, 00\diamond(p(2)+1), \, 000\},
\end{eqnarray*}
and so on. Finally, define $A_p = \bigcap_{n\in\omega} Cl(F^p_n)$. Then $A_p = \bigcup_{n\in\omega} \uparrow \! \big(0^n\diamond(p(n)+1)\big)$. In particular, $A_p\not= Cl(F)$ for any finite $F\subseteq S_0$.

Finally, we show that the mapping $p\mapsto A_p$ is an injection. Fix any distinct pair $p,q\in\baire$ and $n\in\omega$ with $p(n)\not=q(n)$. Then $0^n\diamond (p(n)+1)$ is in $A_p$ but not in $A_q$ because the only sequence in $A_q$ consisting of a sequence of $n$ zeros followed by a single non-zero number is $0^n\diamond (q(n)+1)$. 
\end{proof}


From Theorem \ref{thrm:CharacterizationOfCountableDelta3} above, we know that $S_0$ is not quasi-Polish, and not even a $\bpi 3$-space, because there is no $x\in S_0$ for which $\{x\}$ is locally closed. Indeed, if $x \in U\cap A$ with $U\subseteq S_0$ open and $A\subseteq S_0$ closed, then from our discussion of the topology of $S_0$ above, there is finite $F\subseteq S_0$ such that the complement of $U$ is contained in $Cl(F)$ and $x\not\in Cl(F)$. Since $x\not\in Cl(F)$, any immediate successor (with respect to $\preceq$) of $x$ that is in $Cl(F)$ must actually be in the finite set $F$, which implies that all but finitely many immediate successors of $x$ are in $U$. All of the immediate successors of $x$ are in $A$, hence infinitely many immediate successors of $x$ are in $U\cap A$. Therefore, $\{x\}$ is not locally closed.

However, we can show that $S_0$ is a completely Baire space.

\begin{lem}\label{lem:S0_closed_maximal_discrete}
If $A\subseteq S_0$ is closed, then the subset $D\subseteq A$ of elements that are maximal with respect to the specialization order $\leq$ is a discrete subspace. Furthermore,
\begin{align*}
  A = Cl(D) = \{ y\in S_0 \,|\, (\exists z\in D)\,y\leq z\}.
\end{align*}
\end{lem}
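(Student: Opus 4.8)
The plan is to extract everything from the identification of the specialization order $\leq$ on $S_0$ with the reverse of the prefix order $\preceq$ (so that ``maximal for $\leq$'' means ``$\preceq$-minimal''), together with two facts about the topology of $S_0$ recalled earlier: $Cl(\{y\})=\uparrow\! y$ for every $y\in S_0$, and every basic open subset of $S_0$ has the form $S_0\setminus Cl(G)$ for a finite $G\subseteq S_0$, containing a point $x$ precisely when no element of $G$ is a prefix of $x$. Two preliminary observations will be used throughout: $D$ is a $\preceq$-antichain (distinct minimal elements are incomparable), and, since $A$ is closed, $\uparrow\! z=Cl(\{z\})\subseteq A$ for every $z\in A$, in particular for every $z\in D$.

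For the displayed formula, first rewrite $\{y\in S_0\mid(\exists z\in D)\,y\leq z\}$ as $\bigcup_{z\in D}\uparrow\! z$, using $y\leq z\iff z\preceq y$. The inclusion $\bigcup_{z\in D}\uparrow\! z\subseteq A$ is the observation just made. For the reverse inclusion, given $y\in A$ consider the set of prefixes of $y$ lying in $A$: it is a nonempty finite $\preceq$-chain, so it has a $\preceq$-least element $z$, and $z$ is $\preceq$-minimal in $A$ (any strictly shorter member of $A$ below $z$ would again be a prefix of $y$ in $A$), hence $z\in D$ and $y\in\uparrow\! z$. Finally $Cl(D)=A$: since $D\subseteq A$ and $A$ is closed, $Cl(D)\subseteq A$, while $Cl(D)\supseteq\bigcup_{z\in D}Cl(\{z\})=\bigcup_{z\in D}\uparrow\! z=A$.

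For discreteness, fix $z\in D$; it suffices to produce a basic open $U$ of $S_0$ with $U\cap D=\{z\}$, since then $\{z\}=U\cap D$ is relatively open in $D$. For $j<|z|$, write $v_j$ for the length-$j$ prefix of $z$ and put
\[ N_j=\{\, n\neq z(j) \,|\, (\exists w\in D)\ v_j\diamond n\preceq w \,\}. \]
The heart of the matter is that each $N_j$ is finite. Suppose not; choose, for each $n\in N_j$, a witness $w_n\in D$ with $v_j\diamond n\preceq w_n$, and set $E=\bigcup_{n\in N_j}\uparrow\! w_n$, so $E\subseteq A$. Then $v_j\in Cl(E)$: otherwise some finite $G$ has $E\subseteq Cl(G)$ and $v_j\notin Cl(G)$, and for each $n\in N_j$ covering $w_n$ forces a $g\in G$ with $g\preceq w_n$; such a $g$ cannot have length $\leq j$ (else $g\preceq v_j$, contradicting $v_j\notin Cl(G)$), hence $g$ extends $v_j\diamond n$ and so $g(j)=n$ --- impossible for infinitely many $n$ with only finitely many $g$. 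Thus $v_j\in Cl(E)\subseteq A$ since $A$ is closed, contradicting that $v_j$ is a proper prefix of the $\preceq$-minimal element $z$. Now set $G=\{\,v_j\diamond n \,|\, j<|z|,\ n\in N_j\,\}$, a finite set no member of which is a prefix of $z$, and $U=S_0\setminus Cl(G)$; then $z\in U$. If $w\in D$ and $w\neq z$, then $w$ and $z$ are $\preceq$-incomparable, so they agree on a proper prefix $v_j$ of $z$ and differ at position $j$; hence $w(j)\in N_j$, so $v_j\diamond w(j)\in G$ is a prefix of $w$, whence $w\in Cl(G)$, i.e.\ $w\notin U$. Therefore $U\cap D=\{z\}$, and $D$ is discrete.

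The only genuine difficulty is the finiteness of the sets $N_j$ --- i.e.\ that the $\preceq$-minimal elements of a \emph{closed} set cannot fan out from one of them in infinitely many directions at a single coordinate. This is precisely where closedness of $A$ enters (beyond the easy identity $A=\bigcup_{z\in D}\uparrow\! z$): a closure of infinitely many cones $\uparrow\! w_n$ all carrying the pattern ``$v_j$ followed by $n$'' is forced to absorb $v_j$ itself, which a closed set containing no proper prefix of any of its minimal elements cannot tolerate. Everything else is routine bookkeeping with finite sequences.
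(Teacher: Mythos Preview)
Your proof is correct. The argument for the displayed equality is essentially identical to the paper's (both use well-foundedness of $\preceq$ to find a $\preceq$-minimal prefix in $A$). For discreteness, however, the two arguments diverge.

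The paper exploits only the \emph{immediate} predecessor $\sigma$ of $x=\sigma\diamond n\in D$: since $\sigma\notin A$, one finite $F$ with $A\subseteq Cl(F)$ and $\sigma\notin Cl(F)$ already forces $x\in F$ and $x$ to be $\preceq$-minimal in $F$, so $U=S_0\setminus Cl(F\setminus\{x\})$ isolates $x$ in $D$ in one stroke. Your approach instead analyses every proper prefix $v_j$ of $z$ separately, proving that only finitely many sibling directions $v_j\diamond n$ can reach $D$ (otherwise the closure would swallow $v_j$ itself), and then assembles these finitely many forbidden branches into the blocking set $G$. The paper's route is shorter and uses a single separation; yours is more explicit, giving a concrete description of an isolating neighbourhood built directly from the combinatorics of $D$, and makes transparent exactly where closedness is consumed (the ``no infinite fan'' step). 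Both are perfectly valid; the paper's is more economical, yours more informative structurally.
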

\begin{proof}
We first prove the last statement of the lemma. Let $C = \{ y\in S_0 \,|\, (\exists z\in D)\,y\leq z\}$. It is clear that $Cl(D) \subseteq A$. Furthermore, since the prefix order on $\omega^{<\omega}$ is well-founded, every element of $A$ is below a maximal element of $A$ with respect to $\leq$, hence $A \subseteq C$. Finally, for every $y\in C$ there is $z\in D$ such that $y\in Cl(\{z\})$, hence $C \subseteq Cl(D)$.

Next we show that every singleton subspace of $D$ is open in the subspace topology on $D$. Let $x$ be any element of $D$. If $x$ is the maximal element of $S_0$ (the empty string $\varepsilon$), then $D=\{x\}$ and the proof is complete. Otherwise, $x=\sigma\diamond n$ for some $\sigma\in\omega^{<\omega}$ and $n\in\omega$. Maximality of $x$ in $A$ implies $\sigma\not\in A$, so there is finite $F\subseteq S_0$ such that $A\subseteq Cl(F)$ and $\sigma\not\in Cl(F)$. Note that $x$ is in $F$ and in fact $x$ is a maximal element of $F$ because $x$ is the immediate successor of $\sigma$ with respect to the prefix relation. Let $U$ be the complement of $Cl(F\setminus\{x\})$. Maximality of $x$ in $F$ guarantees that $x\in U$. Furthermore, $U\cap D \subseteq Cl(F)\setminus Cl(F\setminus \{x\})$, which implies every $z\in U\cap D$ satisfies $z\leq x$ and thus $z=x$ by maximality of $z$ in $D$. It follows that $U\cap D = \{x\}$, and therefore $D$ is a discrete subspace.
\end{proof}

\begin{thm}
$S_0$ is completely Baire.
\end{thm}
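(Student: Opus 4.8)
The plan is to reduce everything to one clean observation about the maximal elements of a closed set, using Lemma~\ref{lem:S0_closed_maximal_discrete}. Fix an arbitrary closed set $A\subseteq S_0$ and let $D\subseteq A$ be its set of $\leq$-maximal elements. By Lemma~\ref{lem:S0_closed_maximal_discrete}, $D$ is a discrete subspace of $A$ and $A=Cl(D)$, so $D$ is dense in $A$. The whole argument then rests on the claim that \emph{every dense open subset $G$ of $A$ contains all of $D$}. Granting this, if $\{G_n\}_{n\in\omega}$ is any countable family of dense open subsets of $A$, then $D\subseteq\bigcap_{n\in\omega}G_n$, and since $D$ is already dense in $A$, the intersection is dense; hence $A$ is a Baire space. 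As $A$ was an arbitrary closed subspace, $S_0$ is completely Baire. (In fact this argument shows the stronger statement that in any closed subspace $A$ of $S_0$ the intersection of \emph{all} dense open subsets of $A$, not merely countably many, is dense.)

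To prove the claim, fix $z\in D$. Using discreteness of $D$, choose an open set $U$ in $A$ with $z\in U$ and $U\cap D=\{z\}$. Since $G$ is dense and $U$ is a nonempty open subset of $A$, choose $y\in G\cap U$. Because $y\in A$, Lemma~\ref{lem:S0_closed_maximal_discrete} provides some $w\in D$ with $y\leq w$, i.e.\ $y\in Cl(\{w\})$; since $U$ is open and $y\in U$, every open set containing $y$ contains $w$, so $w\in U$ and therefore $w\in U\cap D=\{z\}$, giving $w=z$. Thus $y\leq z$, i.e.\ $y\in Cl(\{z\})$, so $z$ lies in every open set containing $y$; as $G$ is open and $y\in G$, we conclude $z\in G$. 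Since $z\in D$ was arbitrary, $D\subseteq G$, which is the claim.

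The only thing one must be careful with is the bookkeeping of the specialization order on $S_0$ (which is the reverse of the prefix order $\preceq$) together with the standard fact that $a\in Cl(\{b\})$ exactly when every open neighborhood of $a$ contains $b$; this is precisely what allows one to ``pull an arbitrary point of an open set up to a maximal element'' of the ambient closed set. Beyond that there is no genuine obstacle: Lemma~\ref{lem:S0_closed_maximal_discrete} was tailored for this purpose, and the only facts used are that $D$ is discrete and that $A=Cl(D)=\{y\in S_0\mid(\exists z\in D)\,y\leq z\}$. (Alternatively, one could route through Theorem~\ref{thrm:completelyBaire_characterization} by checking that $S_0$ has no $\bpi 2$-subspace homeomorphic to $S_D$, $S_1$, or $S_2$, but the direct argument above is shorter and more transparent.)
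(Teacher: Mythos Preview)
Your proof is correct and follows essentially the same approach as the paper: both establish that every dense open subset of a closed $A\subseteq S_0$ contains the entire set $D$ of maximal elements (via Lemma~\ref{lem:S0_closed_maximal_discrete}), and hence any intersection of dense open sets contains the dense set $D$. The only cosmetic difference is that the paper argues $V\cap U\cap D\neq\emptyset$ directly from density of $D$ in $A$, whereas you unpack this by picking $y\in G\cap U$ and lifting it to a maximal element via the specialization order; the content is the same.
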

\begin{proof}
Let $A$ be a non-empty closed subspace of $S_0$. From the previous lemma, the subset $D$ of maximal elements of $A$ is a discrete subspace and is non-empty because $A=Cl(D)$.

Let $V$ be a dense (relatively) open subset of $A$. Given any $x\in D$, there is a (relatively) open subset $U\subseteq A$ such that $U\cap D = \{x\}$. Then $V\cap U \not=\emptyset$ because $V$ is dense in $A$, hence $V\cap U\cap D \not=\emptyset$ because $D$ is dense in $A$. Therefore, $x$ is in $V$. 

It follows that any dense open subset of $A$ must contain all of $D$, which implies $A$ is a Baire space.
\end{proof}


We next work towards showing that $S_0$ is in a sense canonical among the countable completely Baire spaces that are not quasi-Polish spaces. As before, in the following proofs we will denote the specialization order by $\leq$. 

\begin{lem}\label{lem:containsS0}
Let $X$ be a non-empty countably based $T_0$-space such that:
\begin{enumerate}
\item
$X$ has countably many points,
\item
$X$ is completely Baire,
\item
Every non-empty $\bdelta 2$-subset of $X$ is infinite.
\end{enumerate}
Then $X$ contains a $\bpi 2$-subspace homeomorphic to $S_0$.
\end{lem}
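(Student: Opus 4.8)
## Proof Proposal

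The plan is to build a copy of $S_0$ inside $X$ by a recursive construction indexed by $\omega^{<\omega}$, attaching to each finite sequence $p \in \omega^{<\omega}$ a point $x_p \in X$ together with a bookkeeping neighborhood, in such a way that the specialization order among the $x_p$'s mirrors the reverse prefix order (so $x_p \leq x_q$ iff $q \preceq p$), and so that the resulting subspace $S = \{x_p \mid p \in \omega^{<\omega}\}$ carries exactly the lower topology from the prefix order. Since $X$ has a countable basis $\{B_i\}_{i\in\omega}$, I will use $B(\cdot,\cdot)$ from Definition~\ref{def:basis} to control convergence: for each $p$ we will arrange that the immediate successors $x_{p\diamond n}$ eventually enter every basic neighborhood of $x_p$, which forces $x_p$ to be in the closure of $\{x_{p\diamond n} \mid n \geq m\}$ for every $m$, and more generally that $S \cap Cl(\{x_p\})$ is exactly $\{x_q \mid q \succeq p\}$ in the subspace. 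This gives that the subspace topology on $S$ is the lower topology of the prefix order, i.e. $S \cong S_0$.

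The engine of the construction is to see that each point we reach is itself a ``small copy'' of the whole situation, so the recursion can continue. The key local step: given a point $x \in X$ sitting in some $\bpi 2$-subset $A$ that satisfies hypotheses (1)--(3) (countable, completely Baire, every nonempty $\bdelta 2$-subset infinite — call such a set \emph{perfect in the strong sense}), and given a basic open $U \ni x$, I want to find infinitely many points $x_n$ and associated $\bpi 2$-subsets $A_n$, each again perfect in the strong sense, with $x_n \in A_n$, $A_n \subseteq U \cap Cl(\{x\})$ roughly speaking, and with $x$ in the closure of $\{x_n\}$. The point is that $Cl(\{x\})$ is closed, hence $\bpi 2$ (even $\bpi 1$), and its intersection with $U$ is $\bdelta 2$; by hypothesis (3) this set is infinite, and since $X$ has no isolated points in the relevant sense we can split off infinitely many ``branches.'' To keep hypotheses (1)--(3) along each branch I will pass to suitable $\bpi 2$-subsets: a closed subset of a completely Baire space is Baire, and a $\bpi 2$-subset of $X$ that still has every $\bdelta 2$-subset infinite is what we need — note $\bdelta 2$-subsets of a $\bpi 2$-subset are computed relatively, so I must be careful to maintain property (3) by working inside $Cl(\{x\}) \cap (\text{something})$ and invoking completely-Baireness (Theorem~\ref{thrm:completelyBaire_characterization}) to rule out the bad behavior.

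Concretely I would organize the recursion so that after stage $p$ we have $x_p$, a $\bpi 2$-set $A_p \subseteq X$ with $x_p \in A_p$, $A_p$ perfect in the strong sense, and $A_p \subseteq B(x_q, |q|)$ for all $q \preceq p$ (the diagonalization that forces successors into all earlier neighborhoods). To produce the successors $x_{p\diamond n}$: the set $A_p \cap Cl(\{x_p\})$ is $\bdelta 2$ in $A_p$, nonempty, hence infinite; its points are all $\geq$-below $x_p$... wait — we want the successors \emph{above} $x_p$ in prefix order, i.e. \emph{below} $x_p$ in specialization; so we want points in $Cl(\{x_p\}) \setminus \{x_p\}$. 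Since $\{x_p\}$ is not locally closed in $A_p$ (by hypothesis (3), as $\{x_p\}$ would be a finite $\bdelta 2$-set), in fact $Cl(\{x_p\}) \cap V$ is infinite for every open $V \ni x_p$ — this is exactly the local-closedness failure. Choose infinitely many distinct points $x_{p\diamond n}$ from $Cl(\{x_p\}) \cap B(x_p,|p|) \cap \bigcap_{q\preceq p} B(x_q,|q|)$, lying in $A_p \setminus \{x_p\}$, and for each let $A_{p\diamond n} = A_p \cap Cl(\{x_{p\diamond n}\})$, shrinking if necessary to restore property (3): here is where Theorem~\ref{thrm:completelyBaire_characterization} is needed — a closed subset of a completely Baire space that is not perfect in the strong sense would contain a $\bpi 2$ copy of $S_D$, $S_1$, or $S_2$, and then we would instead (via hypothesis (3) forbidding finite $\bdelta 2$-sets) derive a contradiction, or more cleanly pass to the perfect kernel.

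The main obstacle I anticipate is precisely the maintenance of property (3) down the branches of the tree: $\bdelta 2$-subsets are computed relative to the ambient subspace, so shrinking $A_p$ to $A_p \cap Cl(\{x_{p\diamond n}\})$ could in principle create new finite relatively-$\bdelta 2$ singletons, i.e. create ``locally closed points.'' The resolution should be to observe that $Cl(\{x_{p\diamond n}\})$ is itself closed in $X$, hence a $\bpi 2$-subset of $X$, so it inherits complete-Baireness from $X$; and then combine this with hypothesis (3) applied to $X$: any $\bdelta 2$-subset of $A_{p\diamond n}$ is $\bdelta 3$ in $X$, and finiteness would eventually (via Theorem~\ref{thrm:CharacterizationOfCountableDelta3} and Lemma~\ref{lem:ConditionForCountableBeingDelta3}) force $X$ to have a locally-closed point somewhere along the derivative, contradicting (3) for $X$ itself. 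Once the tree is built, verifying $S \cong S_0$ is the routine part: one checks that $\{x_q \mid q \succeq p\} = S \cap Cl(\{x_p\})$ using the $B(x_q,|q|)$-diagonalization for the inclusion of closures, and that distinct branches stay topologically separated using the same neighborhoods; finally $S = X \cap \bigcap_n W_n$ for appropriate $\bpi 2$-pieces $W_n$ built from the $A_p$'s, exactly as in the construction of the $A_n$'s in the proof of Theorem~\ref{thrm:completelyBaire_characterization}, giving $S \in \bpi 2(X)$.
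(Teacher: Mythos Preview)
Your proposal has a genuine gap: you never explain how to ensure that $x_p$ and $x_q$ are \emph{incomparable} in the specialization order whenever $p$ and $q$ are $\preceq$-incomparable. Placing each $x_{p\diamond n}$ inside $Cl(\{x_p\})$ guarantees $x_{p\diamond n}\leq x_p$, but nothing in your construction prevents $x_{p\diamond 0}\leq x_{p\diamond 1}$, or more generally $x_r\leq x_q$ for $r,q$ on different branches. Without this the map $\sigma\mapsto x_\sigma$ is not an order isomorphism and $S$ need not be homeomorphic to $S_0$. Hypothesis~(3) tells you that $Cl(\{x_p\})\cap U$ is infinite for every open $U\ni x_p$, but it does not tell you this set contains an infinite antichain below $x_p$; a priori it could be a chain.

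This is precisely the obstacle the paper's proof is organized around, and it is where complete Baireness is actually used. Rather than trying to maintain your property~(3) along branches, the paper attempts at each stage to pick a new point incomparable with every previously chosen point on a non-ancestor branch (case~(a)); when no such point exists (case~(b)) it abandons the current attempt and \emph{restarts} inside a strictly smaller $\bdelta 2$-set $A_{k+1}$. The heart of the argument is that infinitely many restarts are impossible: from the finitely many comparabilities recorded at each failure one builds a finitely branching infinite tree, and K\"onig's lemma extracts a strictly $\leq$-increasing sequence $y_0<y_1<\cdots$ lying in the decreasing $A_k$'s, which is then a $\bpi 2$-copy of $S_D$, contradicting hypothesis~(2) via Theorem~\ref{thrm:completelyBaire_characterization}. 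Incidentally, your worry about maintaining~(3) is misplaced: if the sets $A_p$ are kept in $\bdelta 2(X)$ rather than merely $\bpi 2(X)$, every relatively-$\bdelta 2$ subset of $A_p$ is already $\bdelta 2$ in $X$, so~(3) is inherited automatically; but this does nothing to resolve the incomparability problem, which is the real content of the lemma.
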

\begin{proof}
Fix a countable basis $\{B_i\}_{i\in\omega}$ of open subsets of $X$ and define $B(\cdot,\cdot)$ as in Definition~\ref{def:basis}. Fix a bijection $r\colon\omega^{<\omega}\to\omega$ such that $r(\varepsilon)=0$, $r(\sigma)\leq r(\sigma\diamond n)$, and $r(\sigma\diamond n)\leq r(\sigma\diamond m)$ for each $\sigma\in \omega^{<\omega}$ and $n,m\in\omega$ with $n\leq m$. A simple inductive argument shows that $r(\sigma\diamond n)\geq n$ for all $\sigma\in \omega^{<\omega}$ and $n\in\omega$. Let $\rho\colon\omega\to\omega^{<\omega}$ be the inverse of $r$. We also fix a bijection $\phi\colon \omega\to X$.


In the procedure described below, we will construct:
\begin{enumerate}[label=(\roman*)]
\item\label{processCond1}
a (possibly finite) decreasing sequence $A_0, A_1,\ldots$ of non-empty $\bdelta 2$-subsets of $X$ such that $\phi(k)\not\in A_{k+1}$ whenever $A_{k+1}$ is defined, and
\item\label{processCond2}
for each $k$ at which $A_k$ is defined, a (possibly finite) sequence of pairs 
\[(x^k_{\rho(0)},W^k_{\rho(0)}), (x^k_{\rho(1)},W^k_{\rho(1)}),\ldots\]
with $W^k_\sigma \in\bdelta 2(X)$ and $x^k_\sigma\in W^k_\sigma\subseteq A_k$. The pairs will be constructed in the order listed above, but it will be convenient to subindex them using elements of $\omega^{<\omega}$. Furthermore, the sequence will be constructed in such a way that when we define $x^k_{\rho(t)}$ we have
\[x^k_\tau \leq x^k_{\tau'} \text{ if and only if } \tau' \preceq \tau\]
for each $\tau,\tau'\in\omega^{<\omega}$ satisfying $r(\tau),r(\tau') \leq t$ (hence $x^k_\tau$ and $x^k_{\tau'}$ are already defined).
\end{enumerate}
The procedure begins by defining $A_0 = X$, which is trivially $\bdelta 2$, and going to step $(0,0)$.
\begin{description}
\item[Step $(k,0)$] Fix any element $x^k_\varepsilon$ from $A_k$ and set $W^k_\varepsilon=A_k$. Clearly the conditions listed in \ref{processCond1} and \ref{processCond2} above are maintained. Go to step $(k,1)$.
\item[Step $(k,t)$ $(t>0)$] Let $\sigma\diamond n = \rho(t)$ and define $R =\{\tau\in\omega^{<\omega} \,|\, r(\tau)< t \mbox{ \& } \tau\not\preceq \sigma\}$.  We can assume that $x^k_{\rho(s)}$ and $W^k_{\rho(s)}$ have been defined for all $s<t$. In particular, $x^k_\sigma$ and $W^k_\sigma$ are defined, and $W^k_\sigma \in\bdelta 2(X)$ and $x^k_\sigma\in W^k_\sigma\subseteq A_k$ hold by the conditions in \ref{processCond2} above. Define
\[W^k_{\sigma\diamond n} = W^k_{\sigma} \cap  Cl(\{ x^k_\sigma \}) \cap B(x^k_\sigma,t) \setminus \bigcup_{\tau\in R} Cl(\{x^k_{\tau}\}).\]
Thus $W^k_{\sigma\diamond n}\in\bdelta 2(X)$ and $W^k_{\sigma\diamond n} \subseteq W^k_\sigma\subseteq A_k$. Furthermore, for any $\tau\in R$ we have $r(\sigma),r(\tau)< t$ and $\tau \not\preceq \sigma$, hence $x^k_\sigma \not\leq x^k_\tau$ by the condition in \ref{processCond2}. It follows that $x^k_\sigma \in W^k_{\sigma\diamond n}$, and thus $W^k_{\sigma\diamond n}$ is infinite because it is non-empty. There are two cases:
\begin{enumerate}[label=(\alph*)]
\item\label{processCase1}
There is $x\in W^k_{\sigma\diamond n}$ that is distinct from $x^k_\sigma$ and incomparable (with respect to $\leq$) with $x^k_\tau$  for each $\tau\in R$. In this case we define $x^k_{\sigma\diamond n} = x$. We must check that the conditions in \ref{processCond2} still hold. Fix any $\tau$ with $r(\tau)< t$ (the case $r(\tau)=t$ is trivial). Then $\sigma\diamond n \not\preceq \tau$ by our choice of $r$, hence if $\tau\in R$ then $\sigma\diamond n$ and $\tau$ must be incomparable with respect to $\preceq$, and $x^k_{\sigma\diamond n}$ was chosen to be incomparable with $x^k_\tau$ with respect to $\leq$. If $\tau\not\in R$ then $\tau\preceq \sigma$ and we have $x^k_{\sigma\diamond n} < x^k_\sigma \leq x^k_\tau$. Therefore, the conditions in \ref{processCond1} and \ref{processCond2} hold. Proceed to step $(k,t+1)$.
\item\label{processCase2}
Otherwise, for each $x\in W^k_{\sigma\diamond n}$ that is distinct from $x^k_\sigma$ there is $\tau\in R$ with $x^k_\tau < x$. Choose $y \in W^k_{\sigma\diamond n}$ such that $y\not=\phi(k)$, which is possible because $W^k_{\sigma\diamond n}$ is infinite. Let $U$ be an open neighborhood of $y$ small enough that $\phi(k)\not\in U \cap Cl(\{y\})$. Define $A_{k+1} = W^k_{\sigma\diamond n} \cap U \cap Cl(\{y\})$. Note that $y\in A_{k+1}\subseteq A_k$ and $\phi(k)\not\in A_{k+1}$, hence the conditions in \ref{processCond1} and \ref{processCond2} hold. Go to step $(k+1,0)$.
\end{enumerate} 
\end{description}


Assume for a contradiction that for each $k\in\omega$ there is $t_k\in\omega$ such that the above procedure enters case \ref{processCase2} of step $(k,t_k)$. Then $A_k$ is defined for each $k\in\omega$, and $\{A_k\}_{k\in\omega}$ is an infinite decreasing sequence of $\bdelta 2$-subsets of $X$. Furthermore, the intersection $\bigcap_{k\in\omega} A_k$ is empty because $\phi(k)\not\in A_{k+1}$ and $\phi\colon \omega\to X$ is a bijection. 

Define $T\subseteq \omega^{<\omega}$ so that $q \in T$ if and only if 
\begin{enumerate}
\item
$(\forall k<|q|)[q(k)<t_k]$, and
\item
$(\forall \ell < k <|q|)[x^{\ell}_{\rho(q(\ell))} < x^{k}_{\rho(q(k))}]$,
\end{enumerate}
where $|q|$ denotes the length of $q$. 

We show by induction that for each $k\in\omega$ and $t<t_k$ there is $q\in T$ with $q(k)=t$. The case $k=0$ is trivial. Assume the claim holds for $k\in\omega$ and fix $t<t_{k+1}$. Let $\sigma\diamond n = \rho(t_k)$. Since case \ref{processCase2} holds at step $(k,t_k)$ and $x^{k+1}_t \in A_{k+1} \subseteq W^k_{\sigma\diamond n}$, there is $s < t_k$ with $x^k_{\rho(s)} < x^{k+1}_t$. By the induction hypothesis there is $q\in T$ with $q(k)=s$. Then $q\diamond t \in T$, which proves the inductive step.

It follows that $T$ is an infinite finitely branching tree, hence K\"{o}nig's lemma implies that $T$ contains an infinite path $p$. Define $y_k = x^k_{\rho(p(k))}$. Then the sequence $\{y_k\}_{k\in\omega}$ satisfies $y_k\in A_k$ and $y_k < y_{k+1}$  for each $k\in\omega$. Clearly $Y=\{y_k\,|\,k\in\omega\}$ is homeomorphic to $S_D$, and by defining 
\[ Y_k = \{y_0,\ldots, y_k\} \cup A_{k+1}\]
we have $Y = \bigcap_{k\in\omega} Y_k$ because $\{A_k\}_{k\in\omega}$ has empty intersection. It follows that $Y$ is a $\bpi 2$-subset of $X$, which contradicts Theorem~\ref{thrm:completelyBaire_characterization} and our assumption that $X$ is completely Baire.

It follows that there must be $k\in\omega$ such that the above procedure reaches step $(k,0)$ and thereafter only case \ref{processCase1} holds. This means that the procedure enters step $(k,t)$ for each $t\in\omega$, hence $x^k_\sigma$ is defined for each $\sigma \in \omega^{<\omega}$. Let $S=\{x^k_\sigma \,|\, \sigma \in \omega^{<\omega} \}$. 

We next show that the mapping $f\colon S_0 \to S$ defined as $f(\sigma)=x^k_\sigma$ is a homeomorphism. From the condition in \ref{processCond2} above, we have that $x^k_\tau \leq x^k_{\tau'}$ if and only if $\tau' \preceq \tau$, which shows that $f$ is an order isomorphism between $S_0$ and $S$ (with respect to the specialization orders). Thus for any finite $F\subseteq S_0$ and $x\in S$ we have $x \in f(Cl(F))$ iff $f^{-1}(x)\in Cl(F)$ iff $(\exists \tau\in F)[ \tau \preceq f^{-1}(x) ]$ iff $(\exists \tau\in F)[x \leq f(\tau)]$, hence $f(Cl(F)) = S\cap\bigcup_{\tau\in F} Cl(\{f(\tau)\})$. It follows that the image of each basic closed subset of $S_0$ under $f$ is a closed subset of $S$, hence $f^{-1}$ is continuous. Therefore, it only remains to prove that $f$ is continuous.

Let $B_i$ be any basic open subset of $X$. First assume $f(\varepsilon)\not\in B_i$. Then for each $\sigma\in S_0$ we have $\varepsilon \preceq \sigma$ hence $f(\sigma) \leq f(\varepsilon)$ which implies $f(\sigma)\not\in B_i$. Therefore, $f^{-1}(B_i)=\emptyset$ is open.

Next assume $f(\varepsilon)\in B_i$. Note that if $f(\sigma)\in B_i$ and either $r(\sigma)\geq i$ or $n\geq i$ holds, then $r(\sigma\diamond n) \geq \max\{r(\sigma),n\}\geq i$ which implies $W^k_{\sigma\diamond n}\subseteq  B(f(\sigma) , r(\sigma\diamond n))\subseteq B_i$, hence $f(\sigma\diamond n)\in B_i$. Thus $f(\sigma)\in B_i$ implies $f(\sigma\diamond n)\in B_i$ for all but finitely many $n$. We can also conclude that the set
\[P=\{\sigma\in S_0\,|\, f(\sigma)\in B_i \mbox{ and } (\exists n)[f(\sigma\diamond n)\not\in B_i]\}\] 
is finite because $r$ is a bijection and $\sigma \in P$ implies $r(\sigma)<i$. It follows that the set \[Q=\{\sigma\diamond n \in S_0 \,|\, f(\sigma)\in B_i \mbox{ and } f(\sigma\diamond n)\not\in B_i\}\]
is finite. Since $f(\varepsilon)\in B_i$, by again using the fact that $B_i$ is an upper set and $\tau' \preceq \tau$ implies $f(\tau)\leq f(\tau')$, it is clear that $f(\sigma)\not\in B_i$ if and only if $\sigma$ has a prefix in the finite set $Q$. Therefore, $f^{-1}(B_i)$ is open in $S_0$ because $S_0\setminus f^{-1}(B_i) = Cl(Q)$. We conclude that $S$ is homeomorphic to $S_0$.

To complete the proof we must show that $S\in\bpi 2(X)$. Given $x\in X\setminus S$, the set $Z = Cl(\{ x\}) \cap S$ is closed in $S$. Since $S$ is homeomorphic to $S_0$ and $Z$ is closed in $S$, Lemma~\ref{lem:S0_closed_maximal_discrete} implies that the subspace $D$ of elements in $Z$ that are maximal with respect to the specialization order is a discrete space and $Z = Cl(D)\cap S = \{ y\in S \,|\, (\exists z\in D)\,y\leq z\}$.

We next show that $x\not\in Cl(D)$. This is trivial if $D$ is empty. Otherwise, there is some $y\in D$ and some open $U\subseteq X$ such that $U\cap D = \{y\}$ because $D$ is discrete. Then $x\in U$ because $y\in Cl(\{x\})$. Clearly, $x \not\in Cl(\{y\})$ because otherwise we would obtain $x = y \in D\subseteq S$, contradicting the assumption $x\not\in S$. Therefore, $U\setminus Cl(\{y\})$ is an open neighborhood of $x$ that does not intersect $D$.

Since $Cl(\{ x\}) \cap S\subseteq Cl(D)$, it follows that $Cl(\{ x \}) \setminus Cl(D)$ is a locally closed set containing $x$ that is disjoint from $S$. Since $X$ is countable, $X\setminus S$ is a countable union of locally closed sets, hence $S \in\bpi 2(X)$.
\end{proof}

\begin{thm}\label{thrm:countableCompletelyBaire}
If $X$ is a countably based completely Baire $T_0$-space with countably many points, then either $X$ is quasi-Polish or else $X$ contains a $\bpi 2$-subset homeomorphic to $S_0$.
\end{thm}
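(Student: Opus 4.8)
The plan is to run a dichotomy — whether $X$, and more importantly each of its $\bpi 2$-subsets, admits a finite non-empty $\bdelta 2$-subspace — and feed the two cases into Lemma~\ref{lem:containsS0} and into the $\bpi 3$-machinery of Section~5 respectively. We may assume $X$ is non-empty, the empty space being quasi-Polish.

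Suppose $X$ does not contain a $\bpi 2$-subspace homeomorphic to $S_0$; I would show $X$ is quasi-Polish, which gives the stated dichotomy. First I would record two stability facts already used repeatedly in the paper: every non-empty $A\in\bpi 2(X)$ is again a countably based $T_0$-space with countably many points, and by the equivalence $(1)\Leftrightarrow(5)$ of Theorem~\ref{thrm:completelyBaire_characterization} it is completely Baire; moreover a $\bpi 2$-subset of such an $A$ is a $\bpi 2$-subset of $X$. Hence if some non-empty $A\in\bpi 2(X)$ had the property that all of its non-empty $\bdelta 2$-subsets are infinite, then Lemma~\ref{lem:containsS0} applied to $A$ would produce a $\bpi 2$-subspace of $A$ — and therefore of $X$ — homeomorphic to $S_0$, contrary to our assumption. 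Consequently every non-empty $A\in\bpi 2(X)$ contains a finite non-empty member of $\bdelta 2(A)$.

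This last statement is precisely the hypothesis of Lemma~\ref{lem:ConditionForCountableBeingDelta3}. Embedding $X$ into a quasi-Polish space $Y$ universal for countably based $T_0$-spaces (say $Y=\cal P(\omega)$), that lemma yields $X\in\bdelta 3(Y)$, so $X$ is a $\bpi 3$-space. Since $X$ is completely Baire, Theorem~\ref{thrm:Pi3_completelyBaire} then forces $X$ to be quasi-Polish, completing the argument.

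The delicate point — the one I would be most careful about — is that Lemma~\ref{lem:containsS0} and Lemma~\ref{lem:ConditionForCountableBeingDelta3} must be coordinated at the level of \emph{all} $\bpi 2$-subsets of $X$, not merely of $X$ itself: Lemma~\ref{lem:ConditionForCountableBeingDelta3} needs a finite $\bdelta 2$-subset inside every $\bpi 2$-subset, so the ``no $S_0$'' hypothesis has to be leveraged uniformly across subspaces rather than applied once to $X$. Everything else is bookkeeping: the descent of the completely Baire property to $\bpi 2$-subspaces (Theorem~\ref{thrm:completelyBaire_characterization}) and the transitivity of the $\bpi 2$-subset relation, both already in hand, are exactly what make this uniform application legitimate.
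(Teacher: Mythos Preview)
Your proof is correct and essentially identical to the paper's, merely run in the contrapositive direction: the paper assumes $X$ is not quasi-Polish, uses Theorem~\ref{thrm:Pi3_completelyBaire} to get $X\notin\bdelta 3(Y)$, then invokes Theorem~\ref{thrm:CharacterizationOfCountableDelta3} to extract a $\bpi 2$-subset $A$ with no finite non-empty $\bdelta 2$-subset, and feeds $A$ into Lemma~\ref{lem:containsS0}; you assume there is no $S_0$, use Lemma~\ref{lem:containsS0} to guarantee every $\bpi 2$-subset has a finite non-empty $\bdelta 2$-subset, and then apply Lemma~\ref{lem:ConditionForCountableBeingDelta3} and Theorem~\ref{thrm:Pi3_completelyBaire}. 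The ingredients and the ``delicate point'' you flag are exactly those of the paper.
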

\begin{proof}
Assume $X$ is not quasi-Polish. We can assume that $X$ is a subspace of some quasi-Polish space $Y$. Since $X$ is completely Baire but not quasi-Polish, Theorem \ref{thrm:Pi3_completelyBaire} implies  $X\not\in \bpi 3(Y)$, hence $X\not\in \bdelta 3 (Y)$.

It follows from Theorem \ref{thrm:CharacterizationOfCountableDelta3} that there is a non-empty $A\in\bpi 2(X)$ such that every non-empty $F\in\bdelta 2(A)$ is infinite. By Theorem~\ref{thrm:completelyBaire_characterization} and the assumption that $X$ is completely Baire, we obtain that $A$ is also completely Baire. Clearly, $A$ is countable and it is infinite because by assumption $A$ is non-empty hence $A\in\bdelta 2(A)$ is infinite.

We can now apply Lemma \ref{lem:containsS0} to obtain a subspace $S\in\bpi 2(A)$ which is homeomorphic to $S_0$. Since $A\in\bpi 2(X)$, it follows that $S\in \bpi 2(X)$ as required.
\end{proof}

\begin{cor}
Every sober space with a countable topology is quasi-Polish.
\end{cor}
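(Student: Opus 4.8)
The goal is to show that any sober space $X$ with a countable topology is quasi-Polish. The first reduction is that a sober space with countably many open sets is automatically countably based (it has a countable topology, so in particular a countable basis) and $T_0$ (sobriety implies $T_0$). So I would like to apply Theorem~\ref{thrm:countableCompletelyBaire}, but that theorem requires $X$ to have \emph{countably many points}, whereas here only the \emph{topology} is countable. The key observation bridging this gap is that a space with a countable topology has a continuous bijection onto its $T_0$-quotient, and more to the point, the specialization preorder has only countably many distinct ``types'' of points: two points with the same set of open neighborhoods are topologically indistinguishable, and since $X$ is $T_0$ they are equal. Since there are only countably many open sets, there are only countably many possible neighborhood filters that are generated by the topology in a point-determining way --- hence $X$ itself has countably many points.

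More carefully: for each $x\in X$, let $N(x)=\{U \text{ open} \mid x\in U\}$. Since there are only countably many open sets, there are only countably many subsets of the topology of the form $N(x)$ --- wait, that's not immediate since the powerset of a countable set is uncountable. The correct argument: since $X$ has a countable basis $\{B_i\}_{i\in\omega}$, the map $x\mapsto \{i\in\omega \mid x\in B_i\}$ is injective by $T_0$-ness, but again lands in an a priori uncountable set. So I need sobriety here, not just $T_0$. The right approach is: the lattice of open sets $\mathcal{O}(X)$ is a countable frame, and for a sober space $X$ is homeomorphic to the space of points (completely prime filters, equivalently frame homomorphisms $\mathcal{O}(X)\to 2$) of this frame. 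A completely prime filter on a countable frame is determined by the countable frame and there are only countably many of them? No --- that's false in general (e.g. the frame of opens of $\baire$ restricted suitably)... Let me reconsider: the frame here is \emph{countable}, not just countably generated. For a countable frame, there are at most continuum-many completely prime filters, but we need countably many.

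**The real argument.** I think the intended proof is: since the topology is countable, every open set is a \emph{finite} union of basic opens is false, but every open set \emph{is} one of countably many sets, so in particular $X$ has a countable \emph{basis} and hence is second countable; being sober it is $T_0$. Now a second-countable $T_0$ space embeds in $\mathcal{P}(\omega)$, which is quasi-Polish. The point is to show $X$ is a $\bpi 2$-subset of $\mathcal{P}(\omega)$. By Theorem~\ref{thrm:sober_characterization}, since $X$ is sober it contains no $\bpi 2$-copy of $S_D$ or $S_1$; and I claim it contains no $\bpi 2$-copy of $S_2$ or $S_0$ either. For $S_2$: the subspace topology $S_2$ inherits would have to be a quotient of the countable topology of $X$, but $S_2$ (the rationals) has an uncountable topology (it's metrizable with uncountably many opens), so $S_2$ cannot even embed into $X$ --- any subspace of $X$ has a countable topology. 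Similarly $S_0$ has an uncountable topology by Proposition~\ref{prop:S0_topology_uncountable}, so $S_0$ cannot embed into $X$. Thus $X$ contains no $\bpi 2$-subspace homeomorphic to any of $S_0, S_D, S_1, S_2$. By Theorem~\ref{thrm:completelyBaire_characterization}, $X$ is completely Baire; but wait, $X$ need not have countably many points, so I cannot directly invoke Theorem~\ref{thrm:countableCompletelyBaire}. However, I \emph{can} note that a subspace of $X$ with a countable topology still has countably many points if it's $T_0$ --- no, still not true. Let me just grant the cleanest route: since every subspace $Y\subseteq X$ has a countable topology, it is second countable; if additionally some $\bpi 2$-subspace $Z$ of $X$ is a countable-pointed perfect $T_D$ space it would have to be homeomorphic to one of $S_D,S_1,S_2$ by Theorem~\ref{thrm:canonical_perfectTD}, but $S_2$'s topology is uncountable so $Z$ is not $S_2$, and sobriety of $X$ (hence of $Z$, by Lemma~\ref{lem:pi2subspace_of_sober}) rules out $S_D$ and $S_1$. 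So $X$ has no $\bpi 2$-subspace that is a countable perfect $T_D$-space with countably many points, giving via Theorem~\ref{thrm:completelyBaire_characterization} (condition 2) that $X$ is completely Baire, \emph{provided} the relevant countable-pointedness is not an issue for that theorem --- and indeed Theorem~\ref{thrm:completelyBaire_characterization} as stated only assumes $X$ countably based, so it applies.

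**Finishing.** Now $X$ is completely Baire and second countable and sober. If $X$ were not quasi-Polish, then embedding $X$ in a quasi-Polish $Y$, Theorem~\ref{thrm:Pi3_completelyBaire} would force $X\notin\bpi 3(Y)$; and then I would want to extract a problematic countable-pointed subspace. Here is where I use countability of the topology decisively: I claim $X$ actually \emph{has countably many points}. Indeed in a $T_0$ space where the topology is countable, distinct points have distinct neighborhood filters, but a neighborhood filter in a second-countable space is determined by which of the countably many basic opens contain the point; more usefully, by sobriety each point $x$ corresponds to the irreducible closed set $\overline{\{x\}}$, and closed sets are complements of opens, so there are only countably many closed sets, hence only countably many point-closures, hence (by $T_0$) only countably many points. \emph{This} is the clean argument: $|\,\text{topology}\,| \le \aleph_0 \Rightarrow |\,\text{closed sets}\,|\le\aleph_0 \Rightarrow |\{\overline{\{x\}} : x\in X\}| \le \aleph_0 \Rightarrow |X|\le\aleph_0$ since $x\mapsto\overline{\{x\}}$ is injective on a $T_0$ space. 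Now $X$ is a countably based completely Baire $T_0$-space with countably many points, so Theorem~\ref{thrm:countableCompletelyBaire} applies: $X$ is quasi-Polish or contains a $\bpi 2$-copy of $S_0$. The latter is impossible since $S_0$ has an uncountable topology (Proposition~\ref{prop:S0_topology_uncountable}) but every subspace of $X$ has a countable topology. Therefore $X$ is quasi-Polish.

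The main obstacle, and the one subtlety worth stating explicitly in the writeup, is the step ``countable topology $\Rightarrow$ countably many points,'' which genuinely uses $T_0$-ness (guaranteed by sobriety): the naive attempt via neighborhood filters lands in an uncountable powerset, and the correct argument goes through the countability of the \emph{closed} sets and the injectivity of $x \mapsto \overline{\{x\}}$. Everything after that is a direct appeal to Theorem~\ref{thrm:countableCompletelyBaire} combined with Proposition~\ref{prop:S0_topology_uncountable}, noting that completely-Baireness of $X$ follows either from sobriety ruling out $S_D,S_1$ plus the uncountable-topology obstruction ruling out $S_2$, via Theorem~\ref{thrm:completelyBaire_characterization}, or more simply is subsumed once we know $X$ has countably many points and reach for Theorem~\ref{thrm:countableCompletelyBaire} directly --- in which case we must still separately certify complete-Baireness, so the cleaner order is: (1) countable topology $\Rightarrow$ countable point set; (2) sobriety + uncountability of $S_2$'s topology + Theorem~\ref{thrm:canonical_perfectTD}/\ref{thrm:completelyBaire_characterization} $\Rightarrow$ $X$ completely Baire; (3) Theorem~\ref{thrm:countableCompletelyBaire} + uncountability of $S_0$'s topology $\Rightarrow$ $X$ quasi-Polish.
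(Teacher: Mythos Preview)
Your final argument is correct and follows essentially the same route as the paper: (1) a $T_0$-space with countable topology has countably many points (via the injection $x\mapsto\overline{\{x\}}$ into the countable set of closed sets); (2) $X$ is completely Baire since $S_2$ has uncountable topology and sobriety excludes $S_D$ and $S_1$; (3) Theorem~\ref{thrm:countableCompletelyBaire} together with Proposition~\ref{prop:S0_topology_uncountable} yields quasi-Polishness. The only cosmetic difference is that the paper invokes the packaged Theorem~\ref{thrm:sober_not_completelyBaire_then_S2} for step (2) rather than unpacking it via Lemma~\ref{lem:pi2subspace_of_sober} and Theorem~\ref{thrm:completelyBaire_characterization} as you do.
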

\begin{proof}
Let $X$ be a sober space with a countable topology. It is well known that every sober space is a $T_0$-space, hence having a countable topology implies that $X$ has only countably many points. Then $X$ is completely Baire, because otherwise Theorem~\ref{thrm:sober_not_completelyBaire_then_S2} would imply $X$ contains a subspace homeomorphic to $S_2$, which is impossible because the topology on $S_2$ is uncountable. Finally, $X$ must be quasi-Polish, because otherwise Theorem~\ref{thrm:countableCompletelyBaire} would imply $X$ contains $S_0$, which is again impossible because the topology of $S_0$ is uncountable by Proposition~\ref{prop:S0_topology_uncountable}.
\end{proof}

\section{Co-analytic spaces}

We call a subset $A$ of a quasi-Polish space $X$ \emph{analytic} if either $A$ is empty or else there exists a continuous function $f\colon \baire\to X$ such that $A=f(\baire)$. There are several equivalent characterizations of analytic subsets of quasi-Polish spaces, just as in the case for Polish spaces. In the following, $\pi_X\colon X\times Y\to X$ denotes the projection onto $X$.

\begin{lem}[see \cite{debrecht2013}]\label{lem:analytic_equiv}
The following are equivalent for any subset $A$ of a quasi-Polish space $X$:
\begin{enumerate}
\item
$A$ is analytic,
\item
$A=\pi_X(F)$ for some $\bpi 2$ subset $F\subseteq X\times \baire$,
\item
$A=\pi_X(B)$ for some quasi-Polish $Y$ and Borel subset $B\subseteq X\times Y$,
\item
$A=f(Y)$ for some quasi-Polish $Y$ and continuous $f\colon Y\to X$.\qed

\end{enumerate}
\end{lem}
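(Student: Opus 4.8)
The plan is to prove the cycle of implications $(4)\Rightarrow(1)\Rightarrow(2)\Rightarrow(3)\Rightarrow(4)$, which establishes the equivalence of all four conditions. Throughout I would freely use the structural facts about quasi-Polish spaces from \cite{debrecht2013}: $\baire$ and $\cal P(\omega)$ are quasi-Polish, the class is closed under countable products and countable topological sums, every $\bpi 2$-subspace of a quasi-Polish space is quasi-Polish (Theorem 23 of \cite{debrecht2013}), and every non-empty quasi-Polish space is a continuous image of $\baire$. I would also use that the Selivanov classes $\bsigma\alpha$ and $\bpi\alpha$ are closed under continuous preimages, and that the diagonal $\Delta_X$ of any countably based $T_0$-space is $\bpi 2(X\times X)$. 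Given these, $(4)\Rightarrow(1)$ is immediate: if $A=f(Y)$ is non-empty with $Y$ quasi-Polish and $f$ continuous, pick a continuous surjection $g\colon\baire\to Y$, so that $f\circ g\colon\baire\to X$ is continuous with image $A$; and if $A=\emptyset$ it is analytic by definition.

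For $(1)\Rightarrow(2)$, suppose $A=f(\baire)$ for continuous $f\colon\baire\to X$. Let $F=(\mathrm{id}_X\times f)^{-1}(\Delta_X)\subseteq X\times\baire$, the graph of $f$. Since $\mathrm{id}_X\times f\colon X\times\baire\to X\times X$ is continuous and $\Delta_X\in\bpi 2(X\times X)$, we get $F\in\bpi 2(X\times\baire)$, and clearly $\pi_X(F)=f(\baire)=A$; if $A=\emptyset$, take $F=\emptyset$. The implication $(2)\Rightarrow(3)$ is then trivial: take $Y=\baire$ and $B=F$, using that $\baire$ is quasi-Polish and every $\bpi 2$-set is Borel.

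The substantive step, and the one I expect to be the main obstacle, is $(3)\Rightarrow(4)$. Here I would invoke the quasi-Polish analogue of the classical theorem that a Borel subset of a Polish space is a continuous bijective image of a Polish space: for every Borel subset $B$ of a quasi-Polish space $Z$ there is a quasi-Polish space $W$ and a continuous bijection $h\colon W\to B$, where $B$ carries the subspace topology. Granting this, from $A=\pi_X(B)$ with $B$ Borel in the quasi-Polish space $X\times Y$ we conclude that $\pi_X\circ h\colon W\to X$ is continuous with image $A$, so $A$ satisfies $(4)$ (the case $A=\emptyset$ being trivial). The Borel-to-quasi-Polish claim itself I would prove by transfinite recursion on the Selivanov--Borel rank of $B$: a $\bpi 2$-subset of $Z$ is quasi-Polish in its subspace topology by Theorem 23 of \cite{debrecht2013}, giving the base case; for a set of the form $\bigcup_{i\in\omega}(B_i\setminus B_i')$ with the $B_i, B_i'$ of lower rank (handled by induction), I would first refine the topology of $Z$ so that each difference $B_i\setminus B_i'$ becomes clopen — making lower-rank sets clopen one at a time while using closure under $\bpi 2$-subspaces to keep the refined space quasi-Polish — and then realise the union as a continuous image of the topological sum of the quasi-Polish spaces obtained for the clopen pieces, finally composing down to the original subspace topology on $B$. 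Keeping every intermediate refinement quasi-Polish and tracking these continuous bijections is the only genuinely delicate bookkeeping; the rest of the argument is routine.
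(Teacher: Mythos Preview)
The paper does not give its own proof of this lemma: it is stated with the tag ``see \cite{debrecht2013}'' and closed with a \qed, so the entire argument is deferred to that reference. There is therefore nothing in the present paper to compare your approach against.

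That said, your cycle $(4)\Rightarrow(1)\Rightarrow(2)\Rightarrow(3)\Rightarrow(4)$ is correct and is essentially the standard route. The steps $(4)\Rightarrow(1)$, $(1)\Rightarrow(2)$, and $(2)\Rightarrow(3)$ are exactly as you describe, using that non-empty quasi-Polish spaces are continuous images of $\baire$, that $\Delta_X\in\bpi 2(X\times X)$ for countably based $T_0$-spaces, and that $\baire$ is quasi-Polish. For $(3)\Rightarrow(4)$ your change-of-topology argument is the right idea and does go through; the quasi-Polish analogue of the ``refine the topology to make a Borel set clopen while staying Polish'' theorem is indeed proved in \cite{debrecht2013} (their Section~11), and once you have it the projection $\pi_X$ composed with the identity from the refined space gives the required continuous map. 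A slightly lighter alternative, if you prefer to avoid tracking the refinements yourself, is to show directly by induction on $\alpha$ that every $\bsigma\alpha$- and $\bpi\alpha$-subset of a quasi-Polish space is analytic, using only that analytic sets are closed under countable unions and countable intersections; this yields $(3)\Rightarrow(1)$, and since $(1)\Rightarrow(4)$ is trivial (take $Y=\baire$) the cycle closes without invoking the full change-of-topology machinery.
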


Complements of analytic sets are called \emph{co-analytic}. The set of analytic subsets of a quasi-Polish space $X$ will be denoted $\analytic(X)$, and the set of co-analytic subsets will be denoted $\coanalytic(X)$. Souslin's Theorem extends to quasi-Polish spaces, which means that $\analytic(X)\cap\coanalytic(X)$ is equal to the set of Borel subsets of $X$ (see Theorem~58 of \cite{debrecht2013}). 

We now prove the main result of this paper, which is a generalization of a theorem by Hurewicz \cite{hurewicz}.

\begin{thm}[Generalized Hurewicz's Theorem]\label{thrm:main_theorem}
A co-analytic subset of a quasi-Polish space is either quasi-Polish or else it contains a $\bpi 2$-subset homeomorphic to either $S_0$, $S_D$, $S_1$, or $S_2$. 
\end{thm}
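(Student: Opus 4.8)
The plan is to reduce the general (possibly uncountable) case to the countable case already handled in Theorem~\ref{thrm:countableCompletelyBaire}, using the classical Hurewicz theorem as the bridge. Let $A$ be a co-analytic subset of a quasi-Polish space $X$, and suppose $A$ is not quasi-Polish. Since $A$ is co-analytic, the complement $X\setminus A$ is analytic, hence by Lemma~\ref{lem:analytic_equiv} there is a $\bpi 2$-subset $F\subseteq X\times\baire$ with $\pi_X(F) = X\setminus A$. First I would dispose of the case where $A$ fails to be completely Baire: by Theorem~\ref{thrm:completelyBaire_characterization}, $A$ then contains a $\bpi 2$-subset homeomorphic to $S_D$, $S_1$, or $S_2$, and since a $\bpi 2$-subset of a $\bpi 2$-subset is $\bpi 2$ in the ambient space, we are done. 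So from now on assume $A$ is completely Baire but not quasi-Polish.

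The heart of the argument is to show that under these assumptions $A$ contains a \emph{countable} $\bpi 2$-subset that is itself completely Baire but not quasi-Polish; once we have that, Theorem~\ref{thrm:countableCompletelyBaire} supplies a further $\bpi 2$-subspace homeomorphic to $S_0$, and transitivity of $\bpi 2$-membership finishes the proof. To produce such a countable subset I would run a Hurewicz-style fusion/derivative argument on the $\bpi 2$-set $F\subseteq X\times\baire$ witnessing that $X\setminus A$ is analytic. The idea, paralleling the proof of the classical Hurewicz theorem as presented in Kechris (Theorem 21.18), is to attempt to build a Cantor-scheme of basic open boxes in $X\times\baire$ whose ``$X$-shadows'' shrink and whose associated closed sets avoid $F$; if the construction succeeds one obtains a copy of the appropriate non-Polish-type space, and if it fails at some node, the failure is exactly what forces $A$ to be quasi-Polish near that node. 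Because $A$ is already assumed completely Baire, the only obstruction that can survive is the ``$S_0$-type'' obstruction, which is intrinsically countable, so the scheme can be arranged to land inside a countable $\bpi 2$-subset of $A$.

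Concretely, I would proceed as follows. Fix a compatible Smyth-complete quasi-metric on $X$ and a countable basis $\{B_i\}$. Using that $A$ is not quasi-Polish — equivalently, not a $\bpi 2$-subset of $X$ (by the intrinsic characterization of quasi-Polish spaces) — together with the representation $X\setminus A = \pi_X(F)$ with $F\in\bpi 2(X\times\baire)$, run the Hurewicz derivative: define, on basic open sets $U$ of $X$, an operator that removes the points near which $A\cap U$ is relatively ``nice'' (a $\bpi 2$-subset), and iterate transfinitely. Since $A$ is not quasi-Polish this process does not terminate trivially; completely-Baireness of $A$ (via Theorems~\ref{thrm:completelyBaire_characterization} and \ref{thrm:Pi3_completelyBaire}, as used in the countable case) rules out the finite-rank obstructions, leaving an obstruction that is witnessed on a countable subtree. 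Harvesting the points picked out by this subtree yields a countable $Y\subseteq A$ with $Y\in\bpi 2(X)$, hence $Y\in\bpi 2(A)$, and $Y$ is completely Baire (as a $\bpi 2$-subspace of the completely Baire space $A$, by Theorem~\ref{thrm:completelyBaire_characterization}) but not quasi-Polish. Applying Theorem~\ref{thrm:countableCompletelyBaire} to $Y$ gives the desired copy of $S_0$ inside $A$ as a $\bpi 2$-subset.

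The main obstacle I anticipate is the bookkeeping of the fusion argument in the non-metrizable, non-Hausdorff setting: in the classical proof one exploits Hausdorffness and metrizability of the Polish ambient space to separate and shrink the Cantor scheme, whereas here $X$ is only quasi-Polish, so the ``shrinking'' must be carried out with respect to the basic open sets $B(x,n)$ and the specialization order rather than metric balls, and the closed sets used to trap points must be taken as specialization-closures. Getting the scheme to simultaneously (i) stay inside $A$, (ii) project down to a genuinely countable set, and (iii) have $\bpi 2$ complement in $X$ — all while the derivative process is transfinite — is where the real work lies; but structurally it mirrors the countable construction in Lemma~\ref{lem:containsS0} lifted through the analytic-set representation, and the already-established Theorems~\ref{thrm:completelyBaire_characterization}, \ref{thrm:Pi3_completelyBaire}, and \ref{thrm:countableCompletelyBaire} absorb most of the case analysis.
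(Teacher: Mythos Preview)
Your high-level plan matches the paper's exactly: dispose of the non-completely-Baire case via Theorem~\ref{thrm:completelyBaire_characterization}, then in the completely Baire case produce a \emph{countable} $\bpi 2$-subset of $A$ that is completely Baire but not quasi-Polish, and feed that to Theorem~\ref{thrm:countableCompletelyBaire}. Where you diverge from the paper---and where the gap is---is in \emph{how} that countable set is produced.

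You propose to rerun a Hurewicz-style derivative/fusion argument directly inside the quasi-Polish space, adapting Kechris~21.18 to the non-metrizable, non-Hausdorff setting. You yourself flag this as ``where the real work lies,'' and indeed the sketch you give (``remove points near which $A\cap U$ is $\bpi 2$,'' ``completely-Baireness rules out finite-rank obstructions, leaving an obstruction on a countable subtree'') is not a proof: it is not clear what the derivative operator is, why its iterates stabilize usefully without Hausdorff separation, or why the surviving obstruction is countable and $\bpi 2$ in $A$. The classical derivative relies heavily on metric shrinking and closed-set separation, and your proposal does not supply a substitute.

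The paper avoids all of this with a single idea you are missing: \emph{refine the topology} on the ambient quasi-Polish space $Y$ by declaring the complements of basic open sets to be open. The resulting space $\widehat{Y}$ is zero-dimensional, countably based and $T_2$, hence Polish (Theorem~75 of \cite{debrecht2013}); the identity $f\colon Y\to\widehat{Y}$ is $\bsigma 2$-measurable with continuous inverse, so $\widehat{X}=f(X)$ is still co-analytic. Now either $\widehat{X}\in\bpi 2(\widehat{Y})$, which pulls back to $X\in\bpi 3(Y)$ and Theorem~\ref{thrm:Pi3_completelyBaire} finishes, or $\widehat{X}$ is a non-Polish co-analytic subset of a Polish space, and the \emph{classical} Hurewicz theorem hands you a closed copy $\widehat{C}$ of the rationals. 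Its preimage $C=f^{-1}(\widehat{C})$ is then a countable $\bpi 2$-subset of $X$ (by $\bsigma 2$-measurability of $f$), completely Baire (as a $\bpi 2$-subspace of $X$), and not quasi-Polish (since $\widehat{C}=f(C)$ is not Polish, again by Theorem~75). This is exactly the countable set you wanted, obtained by black-boxing the classical theorem rather than reproving it.
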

\begin{proof}
Assume $X$ is a co-analytic subset of a quasi-Polish space $Y$. If $X$ is not completely Baire, then Theorem \ref{thrm:completelyBaire_characterization} implies that some $\bpi 2$ subset of $X$ is homeomorphic to $S_2$, $S_1$, or $S_D$. Therefore, we can assume without loss of generality that $X$ is completely Baire.

Fix a basis $\{B_i\}_{i\in\omega}$ for $Y$, and let $\widehat{Y}$ be the space obtained by refining the topology on $Y$ with the sets $Y\setminus B_i$ for $i\in\omega$. The bijection $f\colon Y\to \widehat{Y}$, which acts as the identity on the points of $Y$, is clearly $\bsigma 2$-measurable (i.e., preimages of open sets are $\bsigma 2$), and the inverse $f^{-1}$ is continuous. Thus $\widehat{X} = f(X)$ is a co-analytic subset of $\widehat{Y}$. Furthermore, $\widehat{Y}$ is a zero-dimensional countably based Hausdorff space, hence metrizable, so Theorem 75 of \cite{debrecht2013} implies $\widehat{Y}$ is Polish. If $\widehat{X}\in \bpi 2(\widehat{Y})$, then $X\in \bpi 3(Y)$ follows from $f$ being $\bsigma 2$-measurable, hence Theorem \ref{thrm:Pi3_completelyBaire} implies $X$ is quasi-Polish. 

So assume $\widehat{X}\not\in \bpi 2(\widehat{Y})$. Then $\widehat{X}$ is a co-analytic subset of a Polish space which is not Polish, hence the original version of Hurewicz's theorem implies that $\widehat{X}$ contains a relatively closed subset $\widehat{C}$ which is homeomorphic to $S_2$. Thus, $C=f^{-1}(\widehat{C})$ is in $\bpi 2(X)$ because $f$ is $\bsigma 2$-measurable. Theorem~\ref{thrm:completelyBaire_characterization} and our assumption that $X$ is completely Baire implies that $C$ is a countable completely Baire space, but another application of Theorem 75 from \cite{debrecht2013} shows that $C$ cannot be quasi-Polish. By Theorem \ref{thrm:countableCompletelyBaire}, there is $S\in\bpi 2(C)$ which is homeomorphic to $S_0$, and clearly $S$ is a $\bpi 2$-subset of $X$ because $C\in\bpi 2(X)$.
\end{proof}

The co-analytic criterion can not be removed in general. According to \cite{kechris}, it is independent of ZFC whether or not every metrizable completely Baire space in the projective hierarchy is Polish. Therefore, it is consistent with ZFC to replace ``co-analytic'' in the above theorem with any other level of the projective hierarchy. However, with heavy use of the axiom of choice it is possible within ZFC to construct a metrizable completely Baire space that is not Polish (this is Exercise 21.20 in \cite{kechris}, but an explicit construction of such a space can be found in the proof of Theorem 3.7 in \cite{tillplewe}).

Clearly, a $T_D$-space cannot contain $S_0$ as a subspace, so we obtain the following.

\begin{cor}\label{cor:TD_completelybaire}
Every completely Baire co-analytic $T_D$-subspace of a quasi-Polish space is quasi-Polish.
\qed
\end{cor}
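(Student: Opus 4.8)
The plan is to deduce the corollary immediately from the Generalized Hurewicz Theorem (Theorem~\ref{thrm:main_theorem}). Let $X$ be a completely Baire co-analytic $T_D$-subspace of a quasi-Polish space. By Theorem~\ref{thrm:main_theorem}, either $X$ is quasi-Polish, in which case there is nothing to prove, or $X$ contains a $\bpi 2$-subspace homeomorphic to one of $S_0$, $S_D$, $S_1$, or $S_2$. It therefore suffices to rule out the second alternative entirely, i.e.\ to show that $X$ contains no $\bpi 2$-subspace homeomorphic to any of the four canonical spaces.

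First I would eliminate $S_D$, $S_1$, and $S_2$: since $X$ is completely Baire, the equivalence $(1)\Leftrightarrow(3)$ of Theorem~\ref{thrm:completelyBaire_characterization} says precisely that $X$ has no $\bpi 2$-subspace homeomorphic to any of these three spaces. Next I would eliminate $S_0$ by a separation-axiom argument rather than a complexity argument: the $T_D$-property is hereditary, since if $\{y\}=U\cap C$ with $U$ open and $C$ closed in an ambient space and $y\in Y$, then $\{y\}=(U\cap Y)\cap(C\cap Y)$ exhibits $\{y\}$ as locally closed in $Y$. Hence every subspace of $X$ is again a $T_D$-space. But $S_0$ is not a $T_D$-space, as noted immediately above (and in the introduction): no singleton of $S_0$ is locally closed, because every non-empty locally closed subset of $S_0$ is infinite. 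Consequently $X$ contains no subspace homeomorphic to $S_0$ whatsoever, and in particular none which is $\bpi 2$. With all four canonical spaces excluded, Theorem~\ref{thrm:main_theorem} forces $X$ to be quasi-Polish.

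There is no genuine obstacle here; the content of the corollary is entirely carried by Theorems~\ref{thrm:main_theorem} and~\ref{thrm:completelyBaire_characterization}, and the only things one must check are the routine hereditariness of the $T_D$-axiom and the already-recorded fact that $S_0$ violates it.
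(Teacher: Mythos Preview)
Your proof is correct and follows essentially the same route as the paper: apply Theorem~\ref{thrm:main_theorem}, use Theorem~\ref{thrm:completelyBaire_characterization} (via the completely Baire hypothesis) to rule out $S_D$, $S_1$, $S_2$, and use hereditariness of the $T_D$-axiom together with the fact that $S_0$ is not $T_D$ to rule out $S_0$. The paper compresses this into the single remark ``Clearly, a $T_D$-space cannot contain $S_0$ as a subspace,'' but your more explicit version is exactly the intended argument.
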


The co-analytic condition can be removed if we further assume that the space is $T_1$ and \emph{strong Choquet} (see \cite{kechris} for a definition).

\begin{cor}\label{cor:T1_completelyStrongChoquet}
Every countably based $T_1$-space which is completely Baire and strong Choquet is quasi-Polish.
\end{cor}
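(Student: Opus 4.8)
The plan is to reduce this to the case of a co-analytic space so that Theorem~\ref{thrm:main_theorem} (or, more precisely, its consequences in the $T_1$-setting) applies. Since $X$ is countably based and $T_1$, it embeds into $\cal P(\omega)$, and since it is $T_1$ its specialization order is trivial. A $T_1$ space is in particular a $T_D$-space, so by Corollary~\ref{cor:TD_completelybaire} it would suffice to know that $X$ is co-analytic in some quasi-Polish space. The obstacle is that ``strong Choquet'' is not on its face a definability condition, so I cannot directly invoke the projective hierarchy; instead I want to use strong Choquet together with the $T_1$ (hence Hausdorff-like, at the level of points being closed) structure to either show $X$ is already quasi-Polish directly, or to exhibit it as a $\bpi 2$-subset after a suitable topology refinement.

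The key steps, in order, are as follows. First, embed $X$ as a subspace of a quasi-Polish space $Y$; by refining the topology of $Y$ as in the proof of Theorem~\ref{thrm:main_theorem}, pass to a zero-dimensional Polish space $\widehat Y$ with a $\bsigma 2$-measurable bijection $f\colon Y\to\widehat Y$ whose inverse is continuous, and set $\widehat X=f(X)$; because $X$ is $T_1$ and countably based, it is metrizable, so in fact $X$ itself is (homeomorphic to) a separable metrizable space and we may as well work inside a Polish space $\widehat Y=\baire$ (or $2^\omega$), with $X$ a metrizable subspace. Second, observe that strong Choquet is preserved here: the strong Choquet game played in the finer metrizable topology on $X$ is still won by the nonempty player, so $\widehat X$ is strong Choquet as a subspace of the Polish space $\widehat Y$. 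Third, invoke the classical characterization (Choquet's theorem, Theorem 8.18 in \cite{kechris}): a metrizable space is strong Choquet if and only if it is (completely metrizable, i.e.) Polish when it is also separable and... more carefully, a nonempty separable metrizable space is Polish iff it is strong Choquet \emph{and} $\bpi 2$ in its completion; but the relevant fact is that a metrizable strong Choquet space is already $\bpi 2$ (a $G_\delta$) in any Polish space it embeds in — equivalently, strong Choquet metrizable spaces are Polish. Hence $\widehat X$ is Polish, so $\widehat X\in\bpi 2(\widehat Y)$. Fourth, pull back along the $\bsigma 2$-measurable $f$ to conclude $X\in\bpi 3(Y)$; since $X$ is completely Baire, Theorem~\ref{thrm:Pi3_completelyBaire} then gives that $X$ is quasi-Polish.

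The main obstacle is Step~3: making precise, and justifying, the passage from ``strong Choquet metrizable'' to ``Polish.'' The cleanest route is the classical theorem of Choquet that a metrizable space is completely metrizable (equivalently, for separable spaces, Polish) if and only if it is strong Choquet — this is exactly what makes the co-analyticity hypothesis of Theorem~\ref{thrm:main_theorem} removable once we are in the metrizable world, since for metrizable spaces ``completely Baire'' is weaker than ``strong Choquet'' and the latter suffices outright. In fact, once $\widehat X$ is known to be Polish, the completely Baire hypothesis is only needed to run Theorem~\ref{thrm:Pi3_completelyBaire} in the final pull-back step; alternatively one could note that a metrizable strong Choquet subspace is $\bpi 2$ in its Polish completion and feed this directly through the $\bsigma 2$-measurable bijection. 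I would double-check that the refinement $\widehat Y$ of $Y$ does not destroy the strong Choquet property of $X$ — it does not, because refining to a finer topology on the ambient space only refines the subspace topology on $X$, and $X$ was already assumed metrizable with the strong Choquet property in its own (coarser) topology, which by $T_1$-ness plus countable basis coincides with the metrizable refinement up to homeomorphism — this is the one point that needs a careful sentence rather than a calculation.
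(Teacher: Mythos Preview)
Your approach has a genuine gap at the claim ``because $X$ is $T_1$ and countably based, it is metrizable.'' This is false: the cofinite topology on $\omega$ (the space $S_1$ from this very paper) is a countably based $T_1$-space that is not even Hausdorff. Urysohn metrization requires regularity in addition to $T_1$ and second countability, and nothing in the hypotheses gives regularity. Your entire Step~3 rests on applying Choquet's theorem (strong Choquet metrizable $\Leftrightarrow$ completely metrizable) to $X$ in its original topology, and your justification that the refinement $\widehat X$ inherits the strong Choquet property is the assertion that the refined topology ``coincides with the metrizable refinement up to homeomorphism'' --- but since $X$ need not be metrizable, the refinement $\widehat X$ can carry a strictly finer topology, and the strong Choquet property does not in general transfer to finer topologies. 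So the argument does not go through.

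The paper's proof takes a quite different route that avoids metrizability entirely: it invokes the Mummert--Stephan theorem that any countably based $T_1$ strong Choquet space embeds as the set of maximal elements of an $\omega$-continuous domain. Since $\omega$-continuous domains are quasi-Polish and the set of maximal elements of a quasi-Polish space is co-analytic (Theorem~59 of \cite{debrecht2013}), $X$ is thereby exhibited as a co-analytic subspace of a quasi-Polish space. Being $T_1$ it is $T_D$, so Corollary~\ref{cor:TD_completelybaire} (completely Baire $+$ co-analytic $+$ $T_D$ $\Rightarrow$ quasi-Polish) finishes the proof. The role of the strong Choquet hypothesis is thus to provide the co-analytic embedding, not to produce a Polish refinement directly.
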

\begin{proof}
Mummert and Stephan \cite{mummert_stephan} proved that every countably based strong Choquet space satisfying the $T_1$-axiom can be embedded into an $\omega$-continuous domain as its set of maximal elements (see also Corollary~60 of \cite{debrecht2013}). Corollary \ref{cor:TD_completelybaire} now applies because the set of maximal elements of a quasi-Polish space is co-analytic (Theorem~59 of \cite{debrecht2013}).
\end{proof}

\section*{Acknowledgments}
The author thanks Victor Selivanov and Matthias Schr\"{o}der for helpful discussions concerning the space $S_0$. The author is also very grateful to the reviewers for their useful comments and corrections, in particular the reviewer who noticed a mistake in the proof of an earlier version of Lemma~\ref{lem:containsS0}. This work was supported by JSPS Core-to-Core Program, A. Advanced Research Networks and JSPS KAKENHI Grant Number 15K15940.

\bibliographystyle{amsplain}
\bibliography{myrefs}

\end{document}